\theoremstyle{plain}
   \newtheorem{theorem}{Theorem}[section]
   \newtheorem{proposition}[theorem]{Proposition}
   \newtheorem{lemma}[theorem]{Lemma}
   \newtheorem{corollary}[theorem]{Corollary}
   \newtheorem*{theorem*}{Theorem}
\theoremstyle{definition}
   \newtheorem{definition}[theorem]{Definition}
   \newtheorem{example}[theorem]{Example}
   \newtheorem{question}[theorem]{Question}
   \newtheorem{remark}[theorem]{Remark}
\numberwithin{equation}{section}
\DeclareDocumentCommand \ltr { o } {%
  \IfNoValueTF {#1} {%
    \ell_W^{\mathrm{full}} %
  }{%
    \ell_{#1}^{\mathrm{full}}%
  }%
}
\DeclareDocumentCommand \lR { o } {%
  \IfNoValueTF {#1} {%
    \ell_W^{\mathrm{red}} %
  }{%
    \ell_{#1}^{\mathrm{red}}%
  }%
}
\DeclareDocumentCommand \Fred { o } {%
  \IfNoValueTF {#1} {%
    F_W^{\mathrm{red}} %
  }{%
    F_{#1}^{\mathrm{red}}%
  }%
}
\newcommand{\op}[1]{\operatorname{#1}}
\newcommand{\Ftr}{F^\mathrm{full}}
\newcommand{\FFFtr}{\FFF^{\mathrm{full}}}
\newcommand{\GD}{\op{GD}}
\newcommand{\RGS}{\op{RGS}}
\newcommand\Symm{\mathfrak{S}}
\newcommand{\ttt}{\bm{t}}
\newcommand\rank{\operatorname{rank}}
\newcommand\codim{\operatorname{codim}}
\newcommand\wt{\col}
\newcommand{\col}{\operatorname{col}}
\newcommand{\FFF}{\mathcal{F}}
\newcommand{\RRR}{\mathcal{R}}
\newcommand{\defn}[1]{{\color{blue} \it {#1}}}
\newcommand{\BBB}{\mathcal{B}}
\newcommand{\III}{\mathcal{I}}
\newcommand{\AAA}{\mathcal{A}}
\newcommand\CC{{\mathbb{C}}}
\newcommand\ZZ{{\mathbb{Z}}}
\newcommand\NN{{\mathbb{N}}}
\newcommand\QQ{{\mathbb{Q}}}
\newcommand\GL{{\mathrm{GL}}}
\newcommand{\id}{\op{id}}
\begin{document}

\title[W-Hurwitz numbers: Part~III]{Hurwitz numbers for reflection groups III:\\ Uniform formulas}
\author{Theo Douvropoulos, Joel Brewster Lewis, Alejandro H. Morales}

\maketitle

\begin{abstract}
We give uniform formulas for the number of full reflection factorizations of a parabolic quasi-Coxeter element in a Weyl group or complex reflection group, generalizing the formula for the genus-$0$  Hurwitz numbers. This paper is the culmination of a series of three.
\end{abstract}

\section{Introduction}

In the late 19th century, before Poincar{\'e}'s \emph{Analysis Situs} and Major MacMahon's \emph{Combinatory Analysis}, Hurwitz \cite{Hurwitz} 
was the first to recognize that the structure of Riemann surfaces is intrinsically combinatorial.  He showed in particular that such surfaces with finitely many branch points can be encoded  by factorizations of elements in the symmetric group $\Symm_n$.
Hurwitz became interested in enumerating the different classes of Riemann surfaces and gave a complete answer for the case of genus-$0$ surfaces with all but one branch point being simple. In combinatorial terms, he showed this to be equivalent to counting minimum-length \emph{transitive} factorizations $t_1\cdots t_k=\sigma$ of a given element $\sigma$ in $\Symm_n$ as a product of \emph{transpositions} $t_i$, where transitivity refers to the natural action of the group $\langle t_1, \ldots, t_k \rangle$ generated by the factors on the set $\{1,\dots,n\}$. Hurwitz then gave a sketch of an inductive argument, reproduced in detail in \cite{strehl}, for the following remarkable product formula.

\begin{theorem}[{Hurwitz formula \cite{Hurwitz}}]
\label{thm:S_n genus 0}
  The minimum length of a transitive
  transposition factorization in $\Symm_n$ of a permutation of cycle type $\lambda:=(\lambda_1,\ldots,\lambda_r)$ is $n+r-2$. The number of such factorizations is
\begin{equation} \label{EQ: Hurwitz formula}
H_0(\lambda) =  (n+r-2)!\cdot n^{r-3}\cdot  \prod_{i=1}^r \frac{\lambda_i^{\lambda_i}}{(\lambda_i-1)!}.
\end{equation}
In particular, $H_0(1^n) = (2n-2)!\cdot n^{n-3}$ and $H_0(n) = n^{n-2}$.
\end{theorem}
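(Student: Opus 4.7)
The plan is to split the proof into the length claim and the enumeration. For the length, the cleanest route is the Riemann--Hurwitz formula: a transposition factorization $t_1 \cdots t_k = \sigma$ encodes the monodromy of a branched cover of $S^2$ of degree $n$ with one branch point of ramification profile $\lambda$ (contributing $n-r$ to the total ramification) and $k$ simple branch points (each contributing $1$). Riemann--Hurwitz yields $2-2g = n+r-k$, so $k = n+r-2+2g$; transitivity translates to the cover being connected, forcing $g \geq 0$, hence $k \geq n+r-2$ with equality exactly when the cover has genus zero. A purely combinatorial alternative uses that each transposition changes the cycle count by $\pm 1$ and that a connected multigraph on $n$ vertices requires at least $n-1$ edges; a parity argument then recovers the same lower bound.

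For the enumeration I would use the Frobenius formula, which expresses the number of \emph{all} (not necessarily transitive) transposition factorizations of $\sigma$ of length $k$ as
$$F_k(\sigma) \;=\; \frac{1}{n!}\sum_{\mu \vdash n} f^\mu\, \chi^\mu(\sigma)\, c(\mu)^k,$$
where $f^\mu = \chi^\mu(1)$ and $c(\mu) = \sum_{(i,j)\in\mu}(j-i)$ is the content sum, equal to $\binom{n}{2}\chi^\mu(T)/f^\mu$ for a transposition $T$. To isolate the transitive factorizations I would then invoke the exponential formula: the class-type generating function for all factorizations factors as the exponential of the transitive one, so a logarithm recovers the transitive count. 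At the top degree $k = n+r-2$ substantial collapse occurs, and combining this with the Murnaghan--Nakayama rule for $\chi^\mu(\sigma)$ should yield the product formula.

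A genuinely appealing alternative is Hurwitz's original inductive cut-and-join argument (reproduced in \cite{strehl}): consider the last transposition $t_k = (a,b)$ and split into a \emph{cut} case (where $a,b$ lie in the same cycle of $\sigma t_k$, splitting one cycle into two) and a \emph{join} case (where $a,b$ lie in distinct cycles, merging them); this yields an explicit recursion on $(n, r, \lambda)$ that can be checked against the formula. A third route is bijective: the specialization $\lambda = (n)$ gives $H_0((n)) = n^{n-2}$, which is Cayley's formula, and D{\'e}nes's classical bijection between minimum factorizations of an $n$-cycle and labeled trees handles that case; the factor $\prod_i \lambda_i^{\lambda_i}/(\lambda_i-1)!$ in the general formula strongly suggests a bijection between minimum transitive factorizations and pairs consisting of a cactus-tree structure on the $r$ cycles (contributing the $n^{r-3}$ and the multinomial $(n+r-2)!$ factors) and labeled-tree data internal to each cycle (giving $\lambda_i^{\lambda_i}/(\lambda_i-1)!$).

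The hardest step in any approach is handling the transitivity constraint. In the character-theoretic route, extracting the connected part via the logarithm and verifying that the resulting sum collapses to the product form is a nontrivial combinatorial identity. In the inductive cut-and-join, the cut case threatens to disconnect the multigraph of the shorter factorization, so the recursion must be set up to track partitions of $\{1,\dots,n\}$ into orbits (or equivalently must be phrased as a join-cut differential equation on a full generating function). In the bijective route, one must pinpoint exactly the right decorated trees and check that the bijection preserves both transitivity and the factorization identity.
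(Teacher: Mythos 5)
First, note that the paper does not prove Theorem~\ref{thm:S_n genus 0} at all: it is quoted as a classical result of Hurwitz, with the proof deferred to Hurwitz's original inductive sketch as worked out in detail in \cite{strehl}; later sections of the paper then \emph{use} the formula as an input (e.g.\ in the computations \eqref{eq:case 1(a) LHS}--\eqref{eq:case 2(b) LHS}). Your second route (the cut/join induction on the last transposition) is exactly the argument the paper points to, and your first paragraph on the length claim via Riemann--Hurwitz is sound. So the overall orientation of your proposal is right.

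The genuine gap is that none of your three routes for the enumeration is carried past the point where the real difficulty lives, and in two of them you materially underestimate that difficulty. In the cut-and-join route, you say the recursion ``can be checked against the formula''; but verifying that \eqref{EQ: Hurwitz formula} satisfies the recursion while correctly tracking transitivity (the cut case splits the orbit structure, so the recursion does not close on single Hurwitz numbers but on a generating function over set partitions of $\{1,\dots,n\}$) is precisely where Strehl's ``complicated series summation formulas'' and Goulden--Jackson's Lagrange inversion enter --- the paper itself flags this as an open obstacle in Section~\ref{sec:uniform}. In the character-theoretic route, the assertion that at $k=n+r-2$ ``substantial collapse occurs'' after taking the logarithm is not substantiated and is not how the genus-$0$ formula is actually extracted from the Frobenius formula; as written this route would stall. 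Finally, your ``purely combinatorial alternative'' for the lower bound is insufficient: connectivity of the transposition graph gives only $k\geq n-1$, and combined with the parity constraint $k\equiv n-r \pmod 2$ this does not reach $n+r-2$ once $r\geq 3$. The correct combinatorial argument must track the orbit count $o_i$ of $\langle t_1,\dots,t_i\rangle$ and the cycle count $c_i$ of $t_1\cdots t_i$ \emph{jointly}, via the observation that $n+c_i-2o_i$ increases by at most $1$ per step --- this is exactly the $\Symm_n$ instance of the paper's Lemma~\ref{Lem: stat on (W_i,g_i)}(1). Your Riemann--Hurwitz version of the bound is fine, so the length claim survives, but the enumeration --- the substance of the theorem --- remains unproved in your proposal.
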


The formula in Theorem~\ref{thm:S_n genus 0} is for what are now called the {\em (single) Hurwitz numbers of genus $0$}. These numbers also count certain connected graphs embedded in the sphere (the {\em planar maps}). In general, the \defn{genus-$g$ Hurwitz number $H_g(\lambda)$} counts transitive factorizations 
of a given element $\sigma$ in $\Symm_n$ of cycle type $\lambda$ into $k=n+r+2g-2$ transpositions, and also connected graphs embedded on orientable surfaces of genus $g$ (see, e.g., \cite{LandoZvonkin, CavalieriMiles,GJSurvey,ALCO_GJSurvey}).

In the 1980s, work of Stanley \cite{St80}, Jackson \cite{J88}, and others rekindled interest in the enumeration of factorizations in $\Symm_n$ even though, at the time, they were unaware of the topological context. Independently, the next few decades saw the emergence of \emph{Coxeter combinatorics}; one of its main breakthroughs was the realization that theorems about $\Symm_n$ are often shadows of more general results that hold for all reflection groups. In our context of factorizations, this means replacing \emph{transpositions} in $\Symm_n$ with \emph{reflections} in a reflection group $W$, as in \cite{chapuy_Stump}.

The intersection of these two areas has witnessed a lot of research activity recently (see, e.g., \cite{chapuy_Stump,LM,D2}), especially for factorizations of \emph{Coxeter elements} in $W$, generalizing the case of a long cycle $\lambda=(n)$ in $\Symm_n$. In general, however, analogs of Theorem~\ref{thm:S_n genus 0} have been hard to find, not least because it is unclear how to define transitivity in reflection groups.  Several recent papers \cite{BGJ, LM, PR} have explored the concept in the infinite family of complex reflection groups, exploiting the permutation action on coordinate axes, but there are no similar structures in the other (exceptional, primitive) cases.  In $\Symm_n$, transitivity corresponds to the connectedness of the associated maps or Riemann surfaces, but neither of these have analogs for general reflection groups.

An equivalent way to interpret the notion of transitivity in $\Symm_n$ is to require that the factorization cannot be realized in any \emph{proper} \defn{Young subgroup} (a subgroup generated by transpositions), or in other words that the factors generate the full group $\Symm_n$.  This interpretation makes sense for an arbitrary reflection group $W$, where we will thus say that $t_1\cdots t_k=g$ is a \defn{full reflection factorization} of an element $g\in W$ if the factors $t_i$ are reflections and they generate the \emph{full} group $W$. We are particularly interested in the ``genus-$0$'' case, where the number $k$ of factors is minimum for the element $g$. We call this number $k$ the \defn{full reflection length} of $g$ and denote it by \defn{$\ltr(g)$}, leaving the symbol \defn{$\lR(g)$} to stand  for the usual (reduced) reflection length of $g$, which does not require fullness of the factorization. We write \defn{$\RRR$} for the set of reflections in $W$ and \defn{$\Ftr_W(g)$} for the number of minimum-length full reflection factorizations of $g$, i.e.,
\[
\Ftr_W(g):=\#\left\{(t_1,\ldots,t_k)\in\RRR^k:\ t_1\cdots t_k=g,\ \langle t_1,\ldots, t_k\rangle=W,\ \text{and }k=\ltr(g) \right\}.
\] 

In this work we establish uniform product formulas for the counts $\Ftr_W(g)$, which one might call the {\em $W$-Hurwitz numbers}. They generalize Hurwitz's formula of Theorem~\ref{thm:S_n genus 0} to well generated complex reflection groups $W$ for a wide class of elements $g\in W$, known as \emph{parabolic quasi-Coxeter} elements. This class contains all parabolic Coxeter elements, and thus (in the case $W = \Symm_n$) all elements of the symmetric group.  In the next Section~\ref{intro: main}, we present our main enumerative results separately for Weyl groups (Theorem~\ref{Thm: Weyl case}) and well generated complex reflection groups (Theorem~\ref{thm:main}).  
We give in Section~\ref{intro: recovering hurwitz} a short demonstration of the concordance between our main theorems and Hurwitz's Theorem~\ref{thm:S_n genus 0} in the case of the symmetric group $\Symm_n$.  Then we end this introduction in Section~\ref{sec:overview} with an overview of the rest of the paper.

\subsection{Main theorems}
\label{intro: main}

There are many combinatorial approaches to the proof and interpretation of Hurwitz's formula (e.g., \cite{strehl,B-MS,GJ99b,DPS,DPS_II}; see also the account in \cite{CavalieriMiles}). The ones that are most relevant to our work relate transitive factorizations with tree-like structures. For instance, as one special case of Theorem~\ref{thm:S_n genus 0} we have that $H_0(n) = n^{n - 2}$ is the number of trees on $n$ labeled vertices, and an elegant combinatorial proof of this may be found in \cite{Denes}.  In \cite{DPS_II}, Duchi--Poulalhon--Schaeffer gave a bijective proof of the full Theorem~\ref{thm:S_n genus 0} in which the term $n^{r-3}$ roughly counts certain trees whose vertices are the $r$-many cycles of $g\in\Symm_n$, which one may call \emph{relative trees} on the cycles of $g$. In the setting of general reflection groups, we find that trees are replaced in this central role by a natural structure we call \emph{relative generating sets} (defined in Section~\ref{sec:rgs}).

Not all elements in a reflection group admit relative generating sets.  The ones that do form a wide class of elements called \emph{parabolic quasi-Coxeter elements} (defined in Section~\ref{sec:parabolic} below); in particular, in $\Symm_n$, all elements are parabolic quasi-Coxeter.  It is these elements to which our main Theorems~\ref{Thm: Weyl case} and~\ref{thm:main} apply.

Every element $g$ in a reflection group $W$ has a decomposition $g = g_1 \cdots g_r$ given by the decomposition of its \defn{parabolic closure} $W_g$ (the smallest parabolic subgroup that contains $g$) into irreducible factors $W_g = W_1 \times \cdots \times W_r$.  For parabolic quasi-Coxeter elements, this \defn{generalized cycle decomposition} satisfies a further uniqueness property, extending the usual cycle decomposition of permutations---see Section~\ref{sec:parabolic}.
These objects underlie our generalization of Theorem~\ref{thm:S_n genus 0}.

\subsubsection*{The Weyl group case} 
In the case of Weyl groups, the counts $\Ftr_W(g)$ are given by a very appealing product formula where the number $\#\RGS(W,g)$ of relative generating sets appears as a direct factor. The \defn{connection index  $I(W)$} of a Weyl group $W$ is defined as the index of the root lattice in the weight lattice of $W$ (see Section~\ref{Sec: real refl grps}).

\begin{restatable*}[main theorem for Weyl groups]{theorem}{weyltheorem}
\label{Thm: Weyl case}
For any Weyl group $W$ and any parabolic quasi-Coxeter element $g\in W$
with generalized cycle decomposition $g=g_1\cdot g_2\cdots g_r$, we have
\begin{equation}
\label{Eq: weyl thm}
\Ftr_W(g)=\ltr(g)!\cdot\prod_{i=1}^r\dfrac{\Fred(g_i)}{\lR(g_i)!}\cdot 
\#\RGS(W,g)\cdot\dfrac{I(W_g)}{I(W)}
\, ,
\end{equation}
where $W_g$ is the smallest parabolic subgroup containing $g$ and $I(W)$ is the connection index of $W$. In particular, if $g$ is the identity element and $n$ is the rank of $W$, we have
\begin{equation}
\label{eq: intro weyl id case}
\Ftr_W(\id)=(2n)!\cdot\#\RGS(W)\cdot\dfrac{1}{I(W)}.
\end{equation}
\end{restatable*}

For a direct comparison with Theorem~\ref{thm:S_n genus 0}, see Section~\ref{intro: recovering hurwitz} below.

Theorem~\ref{Thm: Weyl case} has a particularly attractive form for parabolic Coxeter elements, where the product structure and analogy to Theorem~\ref{thm:S_n genus 0} is even more apparent. 

\begin{restatable*}{corollary}{weylthmdecompirred}
With the notation of Theorem~\ref{Thm: Weyl case}, we further have that if $g$ is a parabolic Coxeter element and $W_g=W_1\times\cdots \times W_r$ is the decomposition of $W_g$ into irreducibles, then 
\begin{equation}
\label{eq: intro weyl Coxeter}
\Ftr_W(g)=\ltr(g)!\cdot\#\RGS(W,g)\cdot\dfrac{I(W_g)}{I(W)}\cdot\prod_{i=1}^r\dfrac{h_i^{n_i}}{\#W_i},
\end{equation}
where $h_i:=|g_i|$ is the Coxeter number and $n_i$ the rank of $W_i$. 
 \end{restatable*}

\subsubsection*{The complex case}
Theorem~\ref{Thm: Weyl case} can be extended naturally to all \emph{well generated complex reflection groups} (finite subgroups of $\GL(\CC^n)$ generated by $n$ unitary reflections).  In this setting, the quantity $\#\RGS(W,g)$ must be replaced by the sum over the set $\RGS(W, g)$ of a certain statistic that in the case of Weyl groups always equals $I(W_g)/I(W)$. 
This statistic is a \emph{Grammian determinant} $\GD$ that is computed from sets $\bm{\rho}_g$ and $\bm\rho_{\ttt}$ of \emph{roots} associated to $g$ and to each relative generating set $\ttt \in\RGS(W,g)$.  (See Section~\ref{sec:Root systems} for the definitions of root system and roots for a complex reflection group, and Definition~\ref{Defn: Gram Det} for the definition of the Grammian determinant.)

\begin{restatable*}[main theorem for complex reflection groups]{theorem}{volumetheorem} 
\label{volume theorem}
\label{thm:main}
If $W$ is a well generated complex reflection group and $g$ is a parabolic quasi-Coxeter element in $W$ with generalized cycle decomposition $g = g_1 \cdots g_r$, then
\begin{equation}
\label{eq:volume theorem}
\Ftr_W(g) = \ltr(g)! \cdot \prod_{i = 1}^r \frac{\Fred(g_i)}{\lR(g_i)!} \cdot \sum_{ \ttt \in \RGS(W, g)} \frac{ \GD(\bm{\rho}_g)}{\GD(\bm{\rho_{\ttt}}\cup\bm{\rho}_g)},
\end{equation}
where $\bm\rho_{\ttt}$ denotes the set of roots associated with the relative generating set $\ttt$ and ${\bm\rho}_g$ denotes the set of roots associated with a fixed reduced reflection factorization of $g$.
\end{restatable*}

\subsection{Recovering the original Hurwitz formula for the symmetric group}
\label{intro: recovering hurwitz}

When $W = \Symm_n$ and $g \in W$ has cycle type $\lambda = (\lambda_1, \ldots, \lambda_r)$, the data in Theorem~\ref{Thm: Weyl case} are as follows: 
the full reflection length is $\ltr[\Symm_n](g) = n + r - 2$ \cite[\S6]{GJSurvey};
the generalized cycles of $g$ are precisely the cycles of $g$ in the usual sense (see Section~\ref{sec:parabolic});
for the cycle $g_i$ of length $\lambda_i$, we have $\lR[\Symm_n](g_i) = \lambda_i - 1$ and $\Fred[\Symm_n](g_i) = H_0(\lambda_i)= \lambda_i^{\lambda_i - 2}$ \cite{Denes};
the parabolic closure $W_g \cong \Symm_{\lambda_1} \times \cdots \times \Symm_{\lambda_r}$ is the subgroup that permutes the entries of each cycle among themselves;
$I(W) = n$ and $I(W_g) = \lambda_1 \cdots \lambda_r$ \cite[\S9.4]{Kane};
and the relative generating sets of $w$ are sets of $n - r - 1$ transpositions that form a tree when each of the $r$ cycles is collapsed to a single vertex, and there are $\lambda_1 \cdots \lambda_r \cdot n^{r - 2}$ of them \cite[Prop.~7.2]{DLM2}.
Substituting these values in to \eqref{Eq: weyl thm}, we have that
\begin{equation} \label{eq:our version Hurwitz Sn}
\Ftr_{\Symm_n}(g) = (n + r - 2)! \cdot \prod_{i = 1}^r \frac{\lambda_i^{\lambda_i - 2}}{(\lambda_i - 1)!} \cdot \lambda_1 \cdots \lambda_r \cdot n^{r - 2} \cdot \frac{\lambda_1 \cdots \lambda_r}{n}.
\end{equation}
Rearranging the powers of $\lambda_i$ and $n$, we immediately recover Theorem~\ref{thm:S_n genus 0}. 

Comparing the expressions \eqref{EQ: Hurwitz formula} and \eqref{eq:our version Hurwitz Sn} for the genus-$0$ Hurwitz numbers, we see a rearrangement of the powers of $\lambda_i$ that gives a new approach on the formula. We hope that with this paper we provide evidence that this perspective and the associated combinatorial structures reveal new, non-trivial properties of the Hurwitz formula.

\subsection{Overview of the paper} \label{sec:overview}

In Section~\ref{sec: background}, we introduce the necessary terminology and background on reflection groups, including a summary of the key results from the earlier papers \cite{DLM1, DLM2} in this series.  Section~\ref{Sec: proof main} is devoted to the case-by-case proof of our main results. For the groups in the infinite families, we compute the two sides of \eqref{eq:volume theorem} explicitly by combinatorial reasoning (representing both factorizations and relative generating sets by graph-theoretic objects).  For the exceptional Weyl groups, we develop a recurrence relation inspired by the cut-and-join equations of Goulden--Jackson \cite{GJ97} for $\Symm_n$ and rely on a large computer calculation to inductively prove \eqref{Eq: weyl thm}. For the exceptional complex (non-Weyl) groups, we use a different computer calculation involving character trace formulas and the Frobenius lemma to compute both the left side (as in \cite{DLM1}) and right side of \eqref{eq:volume theorem}. Finally, in Section~\ref{sec: further}, we make some remarks on the proof of the main results, and pose some open questions.

\section{Preliminaries and background}
\label{sec: background}

We begin with a brief overview of the machinery of reflection groups and a summary of results from the papers \cite{DLM1, DLM2} that are needed in this paper.  For thorough treatment of this background, see  \cite{Humphreys,Kane,LehrerTaylor, broue-book}.

\subsection{Real reflection groups}
\label{Sec: real refl grps}

Given a finite-dimensional real inner product space $V$, a \defn{reflection} $t$ is an orthogonal map whose \defn{fixed space} $\defn{V^{t}} := \{v \in V \colon t(v) = v\}$ is a hyperplane, that is, $\codim(V^{t})=1$. Equivalently, an orthogonal map is a reflection if it is diagonalizable, with one eigenvalue equal to $-1$ and all others equal to $1$.  A finite subgroup $W\leq \GL(V)$ is called a \defn{real reflection group} if it is generated by reflections. 

The real reflection groups are precisely the finite \defn{Coxeter groups}, those generated by a set $S = \{s_1, \ldots, s_n\}$ of reflections subject to relations $s_i^2 = 1$ and $(s_i s_j)^{m_{ij}} = 1$ for some integers $m_{ij}>1$. 
The elements of $S$ are called \defn{simple reflections} for $W$, and
the size $n$ of $S$ is called the \defn{rank} of $W$.

A real reflection group $W$ is completely determined by its set \defn{$\RRR$} of reflections.  Alternatively, it is determined by the associated \defn{reflection arrangement}, namely, the collection of fixed hyperplanes of the reflections in $\RRR$. 
For any reflection $t\in \RRR$, we may choose two vectors \defn{$\rho_t$} and \defn{$\widecheck{\rho}_t$}, both orthogonal to the fixed hyperplane $V^t$, that satisfy $\langle \rho_t,\widecheck{\rho}_t\rangle=2$.  They are known as the \defn{root} and \defn{coroot} associated to the reflection $t$, which they determine via the relation
\begin{equation}
t(v)=v-\langle v,\widecheck{\rho}_t\rangle\cdot \rho_t\label{eq: defn refln real}
\end{equation}
for all $v$ in $V$.  The collection $\defn{\mathrm{\Phi}}:=\{\pm \rho_t \colon t\in\RRR\}$ of roots and their negatives is the \defn{root system} of $W$; we choose the lengths of the roots $\rho_t$ so that $\Phi$ is $W$-invariant.  The \defn{simple (co)roots} $\defn{\alpha_i} := \rho_{s_i}$ and \defn{$\widecheck{\alpha}_i$}$\, := \widecheck{\rho}_{s_i}$ are the (co)roots associated to the simple reflections $s_i$.

For some real reflection groups, the lengths of the roots in a root system may be chosen so that the $\ZZ$-span of $\Phi$ is a lattice, called the \defn{root lattice} \defn{$\mathcal{Q}$}.  In this case, we say that $W$ is a \defn{Weyl group}.  For a Weyl group $W$, the coroots also span a lattice, called the \defn{coroot lattice} \defn{$\widecheck{\mathcal{Q}}$}.  
For any Weyl group, there is a natural inclusion of the root lattice $\mathcal{Q}$ inside the dual lattice $\mathcal{P}$ of the coroot lattice $\widecheck{\mathcal{Q}}$.  (The lattice $\mathcal{P}$, which will not play a major role in this paper, is called the \emph{weight lattice}.)  The \defn{connection index} $\defn{I(W)}:=[\mathcal{P}:\mathcal{Q}]$ is an important invariant of $W$.

\subsection{Complex reflection groups}

Let $V$ be a finite-dimensional complex vector space with a fixed Hermitian inner product.  A \defn{(unitary) reflection} $ t$ on $V$ is a unitary map whose fixed space $V^{t}$ is a hyperplane, that is, $\codim(V^{ t})=1$.  A finite subgroup $W\leq \GL(V)$ is called a \defn{complex reflection group} if it is generated by reflections. By extending scalars, every real reflection group may be viewed as a complex reflection group.
We say that a complex reflection group $W$ is \defn{irreducible} if there is no nontrivial subspace of $V$ stabilized by its action.  Shephard and Todd \cite{ShephardTodd} classified the complex reflection groups, as follows: every complex reflection group is a product of irreducibles, and every irreducible either belongs to an infinite three-parameter family $G(m,p,n)$, described below, or is one of 34 exceptional cases, numbered $G_4$ to $G_{37}$.  

If $W$ is a complex reflection group acting on $V$, the \defn{rank} of $W$ is the codimension $\codim(V^W)$ of the fixed space $\defn{V^W} := \bigcap_{w \in W} V^w$ of $W$ (or, equivalently, the dimension of the orthogonal complement $(V^W)^\perp$).  Every irreducible complex reflection group of rank $n$ can be generated by either $n$ or $n+1$ reflections. The groups in the first category are called \defn{well generated}; they are better understood and share many properties with the subclass of real reflection groups. 

\subsubsection*{The combinatorial family}

Let $m$, $p$, and $n$ be positive integers such that $p$ divides $m$.  Such a triple indexes a member $G(m, p, n)$ of the infinite family of complex reflection groups, which may be concretely described as 
\[
\defn{G(m, p, n)} := \left\{ \begin{array}{c} n \times n \textrm{ monomial matrices whose nonzero entries are} \\
\textrm{$m$th roots of unity with product an $\frac{m}{p}$th root of unity} \end{array} \right\}.
\]
The groups $G(m, 1, n)$ and $G(m, m, n)$ are well generated, but the intermediate groups $G(m, p, n)$ for $1 < p < m$ are not \cite[\S2.7]{LehrerTaylor}.

It is natural to represent such groups combinatorially.
We may encode each element $w$ of $G(m, 1, n)$ by a pair $[u; a]$ with $u \in \Symm_n$ and $a = (a_1, \ldots, a_n) \in (\ZZ/m\ZZ)^n$, as follows: for $k = 1, \ldots, n$, the nonzero entry in column $k$ of $w$ is in row $u(k)$, and the value of the entry is $\exp(2\pi i a_k/m)$.  With this encoding, it's easy to check that
\[
[u; a] \cdot [v; b] = [uv; v(a) + b], \quad \textrm{ where } \quad v(a) := \left( a_{v(1)}, \ldots, a_{v(n)} \right).
\]
This shows that the group $G(m, 1, n)$ is isomorphic to the \defn{wreath product} $\ZZ/m\ZZ \wr \Symm_n$ of a cyclic group with the symmetric group $\Symm_n$.

The reflections in $G(m, p, n)$ come in two families: for $1 \leq i < j \leq n$ and $k \in \ZZ/m\ZZ$ there is the \defn{transposition-like reflection}
\[
\defn{[(i j); k]} := [(i j); k e_i - k e_j] = [(i j); (0, \ldots, 0, k, 0, \ldots, 0, -k, 0, \ldots, 0)],
\]
which fixes the hyperplane $x_j = \exp(2\pi i k/m) x_i$, and if $p < m$ then for $i = 1, \ldots, n$ and $k = 1, \ldots, \frac{m}{p} - 1$ there is the \defn{diagonal reflection}
\[
[\id; kp e_i] = [\id; (0, \ldots, 0, kp, 0, \ldots, 0)],
\]
which fixes the hyperplane $x_i = 0$.  Observe that with this notation, $[(i j); k] = [(j i); -k]$.

Given an element $w = [u; a] \in G(m, p, n)$ and a subset $S \subseteq \{1, \ldots, n\}$, we say that $\sum_{k \in S} a_k$ is the \defn{color} of $S$; this notion will come up particularly when the elements of $S$ form a cycle in $u$.  When $S = \{1, \ldots, n\}$, we call $a_1 + \ldots + a_n$ the color of the element $w$, and we denote it $\defn{\wt(w)}$.  In this terminology, $G(m, p, n)$ is the subgroup of $G(m, 1, n)$ containing exactly those elements whose color is a multiple of $p$, and $\wt$ is a surjective group homomorphism from $G(m, p, n)$ to $p\ZZ/m\ZZ \cong G(m, p, 1) \cong \ZZ / (m/p)\ZZ$.  Meanwhile the map $[u; a] \mapsto u$ that sends an element of $G(m, p, n)$ to its \defn{underlying permutation} is a surjective group homomorphism onto $\Symm_n$.  More generally, for any $r \mid m$, there is a surjective homomorphism $\defn{\pi_{m/r}} : G(m, 1, n) \to G(r, 1, n)$ defined by $\pi_{m / r}([u; a]) = \left[u;  \frac{m}{r} \cdot a \right]$ (see \cite[Def.~2.1]{DLM1}).

\subsection{Root systems}
\label{sec:Root systems}

For a real reflection group, the root system carries essentially the same information as the reflection arrangement of $W$. The situation for complex groups is more complicated, beginning with the fact that there are infinitely many unit vectors orthogonal to each hyperplane (not just two, as in \S\ref{Sec: real refl grps}. It is a difficult problem to choose a collection of such orthogonal vectors for each hyperplane so that the whole family of roots is closed under multiplication by $W$, and even more difficult to define an object analogous to the root lattice of Weyl groups. Although these problems have been (at least partially) resolved (see Section~\ref{sec: michel root systems}), we will not make use of such stronger structures. For us, the root system will encode the reflection arrangement and we will further use a collection of normalizing coroots to keep track of the spectrum of each reflection.

To construct our roots and coroots, we pick, for each reflection $t$ of $W$ and its unique non-$1$ eigenvalue $\xi$, \emph{any} nonzero element $\rho_{t}$ of the $\xi$-eigenspace of $t$ and we call it the \defn{root associated to $t$}. Then we define the \defn{coroot associated to $t$} as the \emph{unique} vector $\widecheck{\rho}_{t}$ parallel to $\rho_{t}$ that satisfies $\langle \rho_{t}, \widecheck{\rho}_{t}\rangle = 1-\xi$ for the standard Hermitian inner product $\langle\cdot,\cdot\rangle$. In particular, this allows us to write the reflection $ t\in \GL(V)$ in the following standard form:
\[
 t(v)=v-\langle v,\widecheck{\rho}_{ t}\rangle\cdot \rho_{ t}.
\]

\subsection{Reflection length and full reflection length}
\label{sec:reflen}

The \defn{reflection length $\lR(w)$} of an element $w$ in a (real or complex) reflection group $W$ is the minimum length $k$ such that $w$ can be factored as $w = t_1 \cdots t_k$ for reflections $t_1, \ldots, t_k$.  The reflection length determines a partial order $\defn{\leq_{\RRR}}$ (the \defn{absolute order}) on the elements of $W$ via
\(
u \leq_{\RRR} v 
\) if and only if \(\lR(u)+\lR(u^{-1}v)=\lR(v)
\).

In the case of the symmetric group $W = \Symm_n$, reflection length has a simple combinatorial formula: $\lR(w) = n - c(w)$ where $c(w)$ is the number of cycles of $w$.  If $W$ is a real reflection group or the group $G(m, 1, n)$, reflection length has a geometric interpretation: if $W$ is rank $n$ then $\lR(w) = n - \dim(V^w)$ is the codimension of the fixed space of $w$. 
In the infinite family $W=G(m,p,n)$, one may also give combinatorial formulas for reflection length \cite[Thm.~4.4]{Shi2007}.

In \cite{DLM1}, we defined a reflection factorization $w = t_1 \cdots t_k$ of $w$ to be \defn{full} if the group $\langle t_1, \ldots, t_k\rangle$ generated by the factors is equal to the full group $W$, and we defined the \defn{full reflection length $\ltr(g)$} of $g$ to be the minimum length of a full reflection factorization:
\[
\ltr(g) := \min \Big\{ k : \exists\ t_1, \ldots, t_k \in \RRR \text{ such that } t_1 \cdots t_k = g \text{ and } \langle t_1, \ldots, t_k \rangle = W \Big\}.
\]
The next result gives a bound on the full reflection length of an element in terms of its reflection length.

\begin{corollary}[{\cite[Cor.~5.4]{DLM2}}]
\label{Corol: lr(g)+ltr(g)>=2m}
If $W$ is a complex reflection group and $r$ denotes the minimum size of a generating set of reflections for $W$, then for any element $g\in W$, we have 
\[
\lR(g)+\ltr(g)\geq 2r.
\]
\end{corollary}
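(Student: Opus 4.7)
The plan is to prove the slightly more informative statement $\lR(g)+\ltr(g)\geq 2r$ for every $g\in W$ by induction on $\ltr(g)$; taking $g=\id$ recovers the bound $\ltr(\id)\geq 2r$ as a special case, but handling general $g$ directly avoids having to separately bootstrap through the identity.

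\textbf{Base case.} The minimum possible value of $\ltr(g)$ is $r$, since any full factorization uses at least $r$ reflections (the minimum needed to generate $W$). When $\ltr(g)=r$, we have $g=t_1\cdots t_r$ for some minimum reflection generating set of $W$. By the ``quasi-Coxeter'' theorem (due to Carter for Weyl groups, and extended to well generated complex reflection groups by subsequent authors), such a product automatically satisfies $\lR(g)=r$; hence $\lR(g)+\ltr(g)=2r$.

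\textbf{Inductive step.} Suppose $\ltr(g)=k>r$ and the inequality is known for every $g'$ with $\ltr(g')<k$. Fix a minimum full factorization $g=t_1\cdots t_k$. Since $k>r$, the set $\{t_1,\ldots,t_k\}$ is not of minimum size, so greedily deleting reflections while preserving generation shows there exists an index $j$ with $\{t_1,\ldots,t_k\}\setminus\{t_j\}$ still generating $W$. Set
\[
g'\ :=\ t_1\cdots t_{j-1}\,t_{j+1}\cdots t_k, \qquad s\ :=\ (t_{j+1}\cdots t_k)^{-1}\,t_j\,(t_{j+1}\cdots t_k),
\]
so that $s$ is a reflection (being conjugate to $t_j$) and $g=g's$. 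The displayed factorization of $g'$ is full of length $k-1$, so $\ltr(g')\leq k-1$. The inductive hypothesis gives $\lR(g')+\ltr(g')\geq 2r$, hence $\lR(g')\geq 2r-k+1$. Combining this with the subadditivity bound $\lR(g)\geq \lR(g')-\lR(s)=\lR(g')-1$ yields
\[
\lR(g)+\ltr(g)\ \geq\ (\lR(g')-1)+k\ \geq\ (2r-k+1-1)+k\ =\ 2r.
\]

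\textbf{Main obstacle.} The inductive machinery is routine once the greedy-deletion move is justified; the crux is the base case, namely the quasi-Coxeter assertion that any product of a minimum reflection generating set has reflection length exactly $r$. This is nontrivial already for Weyl groups, where it is Carter's theorem, and requires extra input in the non--well generated complex case (where $r=\rank(W)+1$), in which the geometric ``independence'' of a minimum generating set of reflections must be analyzed more delicately.
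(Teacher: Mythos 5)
The paper itself does not prove this statement (it is quoted from \cite{DLM2}), so your argument has to stand on its own, and the inductive step contains a genuine gap. You assert that since the set $\{t_1,\ldots,t_k\}$ has more than $r$ elements, ``greedily deleting reflections while preserving generation'' yields a removable index $j$. But \emph{not of minimum cardinality} does not imply \emph{not inclusion-minimal}: a reflection generating set with more than $r$ elements can be irredundant, and then no factor can be deleted and the induction stalls. A concrete counterexample occurs already in the dihedral group $W=G(30,30,2)$ of order $60$, where $r=2$: writing $t_a$ for the reflection in the line at angle $a\pi/30$, each of the pairs $\{t_0,t_2\}$, $\{t_2,t_5\}$, $\{t_0,t_5\}$ generates a proper dihedral subgroup (because $\gcd(2,30)$, $\gcd(3,30)$, $\gcd(5,30)$ are all $>1$), while all three together generate $W$ (because $\gcd(2,3,30)=1$). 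This is not a vacuous worry for your induction: $t_0t_2t_5=t_3$ is a minimum-length full reflection factorization of the reflection $t_3$ (one checks $\ltr(t_3)=3>r=2$), so if you ``fix'' this particular factorization there is no index $j$ to delete. Repairing the step would require showing that \emph{some} minimum-length full factorization of $g$ always admits a removable factor, which is not obvious and is comparable in difficulty to the statement you are proving.

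The base case is also incomplete, as you acknowledge. For well generated $W$ it can be salvaged without the full quasi-Coxeter theorem: a generating set of $n=\rank(W)$ reflections must have linearly independent roots (they span the $n$-dimensional space $(V^W)^\perp$), so the product has fixed space of codimension $n$ and hence $\lR\geq n$; note that leaning on \cite[Cor.~5.5]{DLM2} here is delicate anyway, since that result is established alongside the very corollary you are proving. For the non--well generated groups, where $r=\rank(W)+1$, the required inequality $\lR(t_1\cdots t_{r})\geq r$ cannot follow from a fixed-space codimension bound, and you supply no argument; together with the failure of the deletion step, this means the proposal does not establish the corollary.
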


We are primarily concerned with the \emph{enumeration} of full factorizations.  Given $g$ in $W$, we write $\defn{\Fred(g)}$ for the number of reduced reflection factorizations of $g$ and $\defn{\Ftr_W(g)}$ for the number of minimum-length full reflection factorizations of $g$.  
For $W$ in the infinite family, the values $\Ftr_W(g)$ were computed in \cite[Thm.~5.2]{DLM1} for arbitrary elements $g$; we record here the cases that will be necessary in this paper.  In the formulas below, $\defn{\varphi}$ is Euler's totient function and \defn{$J_2$} is Jordan's totient function $J_2(n) := \sum_{r \mid n} \mu(n/r) r^2$.
\begin{proposition}[{part of \cite[Thm.~5.2]{DLM1}}]
    \label{prop:formula for Ftr in the combinatorial case}
    Let $g\in G(m,p,n)$ and let $\lambda = \langle \lambda_1, \ldots, \lambda_k\rangle$ be the cycle type of the underlying permutation $\pi_{m/1}(g)$. 
    If $p = 1\neq m$, let $ a= \gcd(\col(w), m)$; then
    \[
    \Ftr_{m,1,n}(g) = \begin{cases}
    n(n+k-1)\cdot m^{k-1}  \cdot H_0(\lambda), & \text{ if } a =1,\\[6pt]
    \dfrac{n^2 (n+k)(n+k-1)m^{k} }{2} \cdot \dfrac{\varphi(a)}{a} \cdot H_0(\lambda), & \text{ otherwise.} 
    \end{cases}
    \]
    If $p = m$, suppose that the $k$ cycles of $g$ have colors $a_1, \ldots, a_k$, and let $d = \gcd(a_1, \ldots, a_k, m)$. Then 
    \[
\Ftr_{m,m,n}(g) = \begin{cases}
    m^{k-1} \cdot H_0(\lambda), & \text{ if } d = 1, \\[6pt]
    m^{k+1} \cdot \dfrac{J_2(d)}{d^2} \cdot  H_1(\lambda), & \text{ otherwise.}
    \end{cases}
    \]
\end{proposition}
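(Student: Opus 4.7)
The plan is to reduce to Hurwitz's formula in $\Symm_n$ via the underlying-permutation homomorphism $\pi_{m/1}\colon G(m,1,n) \to \Symm_n$, separating each reflection factorization into its ``symmetric part'' (the projected transposition factorization) and its ``abelian part'' (the colors). Every reflection in $G(m,1,n)$ projects to either a transposition (if transposition-like) or to the identity (if diagonal), so a factorization $g = t_1 \cdots t_\ell$ descends to a transposition factorization of $\pi_{m/1}(g)$ of length equal to the number of non-diagonal factors. The fullness condition $\langle t_1,\ldots,t_\ell\rangle = W$ splits into two parts: (i) transitivity of the projected factorization (generating $\Symm_n$), which is counted by the Hurwitz number $H_0(\lambda)$ at genus zero or $H_1(\lambda)$ at genus one, and (ii) a color-generation condition on the factors, which will contribute the remaining factors via M\"obius inversion.

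For $G(m,1,n)$ with $a = \gcd(\wt(g), m)$, there are two regimes. When $a = 1$, the global color $\wt(g)$ already generates $\ZZ/m\ZZ$, so a minimum-length full factorization uses a single diagonal reflection to carry the color and the projected transposition factorization is genus zero of length $n + k - 2$; M\"obius inversion over the divisors of $m$ for the colors of the transposition-like factors contributes the $m^{k-1}$ factor, while the position and color choices for the diagonal factor contribute $n(n+k-1)$. When $a > 1$, Corollary~\ref{Corol: lr(g)+ltr(g)>=2m} forces at least two excess reflections (which can be taken to be diagonal), whose positions contribute a binomial coefficient $\binom{n+k}{2}$ and whose colors, subject to the generation constraint, contribute $\varphi(a)/a$ after M\"obius inversion, yielding the second formula.

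For $G(m,m,n)$ diagonal reflections are not available, so the color-balancing must happen through transposition-like factors. When $d = \gcd(a_1,\ldots,a_k,m) = 1$ the cycle colors already generate $\ZZ/m\ZZ$, no excess reflections are needed, and direct color enumeration of the underlying transitive factorization of length $n+k-2$ yields $m^{k-1} \cdot H_0(\lambda)$. When $d > 1$, the two excess reflections forced by Corollary~\ref{Corol: lr(g)+ltr(g)>=2m} are genuine transpositions on projection, so rather than merely extending a genus-zero factorization they push the underlying factorization to genus one, counted by $H_1(\lambda)$; the color-generation M\"obius inversion now tracks pairs of colors and produces Jordan's totient $J_2(d)$ in place of Euler's $\varphi(d)$.

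The main obstacle is the color-generation bookkeeping combined with the length analysis from Corollary~\ref{Corol: lr(g)+ltr(g)>=2m}: one must show in each regime that the number of excess reflections beyond the reduced length is \emph{exactly} determined (not merely bounded from below), and, in the $G(m,m,n)$ case with $d > 1$, that these excess reflections genuinely force a genus jump in the underlying factorization rather than contributing in some other way. Once these structural facts are in place, the remaining enumeration is a routine M\"obius inversion on the divisor lattice of $m$ (or $a$, or $d$) combined with the Hurwitz formula for the projected factorizations.
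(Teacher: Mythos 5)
This proposition is not proved in the present paper at all: it is quoted verbatim from \cite[Thm.~5.2]{DLM1}, so there is no internal proof to compare against, only the strategy of the cited source. Your skeleton --- project along $\pi_{m/1}$, let the transposition-like factors account for a transitive factorization in $\Symm_n$ counted by $H_0$ or $H_1$, and handle the colors separately --- is indeed the right frame and is consistent with how \cite{DLM1} proceeds. But as written your argument has real gaps, and you have correctly located them yourself in your last paragraph without closing them.

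The first gap is that the fullness condition does not split as ``transitivity of the projection plus a color-generation condition on the factors.'' A connected set of true transpositions generates only $\Symm_n$, never $G(m,1,n)$. The correct statement is that the reflection subgroup generated is determined by connectivity together with the subgroup of $\ZZ/m\ZZ$ generated by the colors of the diagonal factors and by the \emph{sums of colors around independent cycles} of the multigraph of transposition-like factors (the statistic $\delta$ of Remark~\ref{rem: Shi's delta}, extended to Betti number $k-1$ or $k+1$). Without this you cannot (a) see why the generation condition is automatic when $a=1$ (the single diagonal color is forced to equal $\col(g)$, which generates $\ZZ/m\ZZ$), (b) derive $\varphi(a)$ versus $J_2(d)$ (one Möbius inversion is over a cyclic group, the other over the rank-two situation created by two independent cycle-sums), or (c) rule out a genus-zero projection when $d\neq 1$ in $G(m,m,n)$: there the cycle colors of the product generate the same subgroup of $\ZZ/m\ZZ$ as the $k-1$ cycle-sums of a genus-zero multigraph, so fullness would force $d=1$ --- this, plus a parity argument via $\det$ to exclude length $n+k-1$, is what pins $\ltr(g)$ at $n+k$ and forces the genus jump, and none of it is in your sketch. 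The second, smaller, inaccuracy is the attribution of $m^{k-1}$ to Möbius inversion: that factor is the number $m^{\beta_1}$ of edge-colorings of a fixed connected multigraph of first Betti number $\beta_1=k-1$ having a prescribed product (a coset count), while Möbius inversion is responsible only for $\varphi(a)$ and $J_2(d)$; and obtaining the normalizations $\varphi(a)/a$ and $J_2(d)/d^2$ requires showing that the relevant color data is equidistributed over the subgroups of $\ZZ/m\ZZ$ (resp.\ $(\ZZ/m\ZZ)^2$), which is a lemma, not a formality. So the approach is viable, but the structural facts you defer --- exact minimum length in each regime, the cycle-sum form of the generation condition, and the equidistribution behind the totient factors --- are the actual content of the proof.
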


\subsection{Coxeter elements and quasi-Coxeter elements}
\label{sec:cox and q-cox}

In a Coxeter group $W$, the product of the simple reflections in any order is called a \defn{Coxeter element} of $W$.  In particular, every Coxeter element has a reduced reflection factorization that generates $W$.  More generally, if $g$ is an element of a complex reflection group such that $\lR(g)$ is equal to the rank of $W$ and $g$ has a reduced reflection factorization that generates $W$, then $g$ is a \defn{quasi-Coxeter element} for $W$. (By definition, such elements only exist in well generated complex reflection groups.)  For (much) more on the motivation for this definition, see \cite[\S3]{DLM2}.

The enumeration of reduced reflection factorizations of a Coxeter element is given by the beautiful Arnold--Bessis--Chapoton formula   
\begin{equation}
\Fred(c)=\dfrac{h^n \cdot n!}{\#W},
\label{eq: deligne-arnold-bessis formula}
\end{equation}
where $h$ is the Coxeter number of $W$ (the multiplicative order of the Coxeter elements) and $n$ is its rank. (For a detailed discussion of the history of this formula, see \cite[\S1]{chapuy_theo_lapl}.)  In particular, in the symmetric group $\Symm_n$, whose rank is $n - 1$, whose Coxeter elements are the $n$-cycles, and whose Coxeter number is $n$, this recovers the number $H_0(n) = n^{n - 2}$ of reduced factorizations of an $n$-cycle (a special case of Theorem~\ref{thm:S_n genus 0}).

The next result, from \cite{DLM2}, gives the analogous enumeration of reduced reflection factorizations for quasi-Coxeter elements in the infinite families of well generated complex reflection groups. 
\begin{corollary}[{\cite[Cor.~6.11]{DLM2}}]
\label{cor: Fred for infinite families}
Let $g$ be a quasi-Coxeter element in the complex reflection group $W$.
\begin{enumerate}[(i)]

\item For $W=\Symm_n = G(1,1,n)$, we have that 
\[
\Fred(g) = \Ftr_W(g) = n^{n-2}.
\]
\item For $W=G(m,1,n)$ with $m > 1$, we have that 
\[
\Fred(g) = \Ftr_W(g) = n^n.
\]
\item For $W=G(m,m,n)$ with $m > 1$, if $g$ consists of two cycles of lengths $a$ and $b$ with colors that generate $\ZZ/m\ZZ$, then we have that
\[
\Fred(g) = \Ftr_W(g) = m(n-1)\cdot \binom{n-2}{a-1,b-1}\cdot a^a \cdot b^b.
\]
\end{enumerate}
\end{corollary}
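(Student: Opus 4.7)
The plan is to reduce everything to Proposition~\ref{prop:formula for Ftr in the combinatorial case}, which already enumerates minimum-length full reflection factorizations $\Ftr_W(g)$ for arbitrary $g$ in the infinite families. Two ingredients are needed: first, the identity $\Fred(g) = \Ftr_W(g)$ valid for quasi-Coxeter $g$, and second, the identification of the cycle type of the quasi-Coxeter elements appearing in each case.

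For the first ingredient, fix a quasi-Coxeter element $g$, so $\lR(g)$ equals the rank $r$ of $W$ and some reduced factorization of $g$ generates $W$. Corollary~\ref{Corol: lr(g)+ltr(g)>=2m} then gives $\ltr(g) \geq 2r - r = r$, while the defining generating reduced factorization shows $\ltr(g) \leq r$; hence $\ltr(g) = r = \lR(g)$ and every minimum-length full factorization is reduced, so $\Ftr_W(g) \leq \Fred(g)$. For the reverse inequality, which is the central step, I would invoke the transitivity of the Hurwitz braid action on the set of length-$r$ reduced factorizations of $g$: since Hurwitz moves preserve the subgroup generated by the factors, transitivity combined with the existence of even one generating reduced factorization forces every reduced factorization to be full, giving $\Fred(g) = \Ftr_W(g)$. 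This Hurwitz-transitivity for quasi-Coxeter elements (originally due to Bessis/Deligne in the Coxeter case and extended to the quasi-Coxeter setting by Baumeister--Gobet--Roberts--Wegener and others) is the key nontrivial input and, I expect, the heavy lifting carried out in the preceding paper \cite{DLM2}.

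For the second ingredient, in each case one identifies the cycle type and then reads off the value from Proposition~\ref{prop:formula for Ftr in the combinatorial case}. In $\Symm_n$, $\lR(g) = n - 1$ forces $g$ to be an $n$-cycle; one may either invoke D\'enes's theorem directly (labeled trees on $\{1,\ldots,n\}$ are connected, so every reduced factorization is automatically full and there are Cayley's $n^{n-2}$ of them), or else apply Proposition~\ref{prop:formula for Ftr in the combinatorial case} with $p = m = 1$, $k = 1$, $\lambda = (n)$ to get $H_0((n)) = n^{n-2}$. In $G(m,1,n)$ with $m > 1$, the reflection-length formula $\lR([u;a]) = n - \#\{\text{cycles of color } 0\}$ together with the quasi-Coxeter hypothesis pins $g$ down to a single $n$-cycle of color coprime to $m$, so $k = 1$ and $a = 1$ in Proposition~\ref{prop:formula for Ftr in the combinatorial case} yield $n \cdot n \cdot H_0((n)) = n^n$. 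In $G(m,m,n)$ with $m > 1$, the stated two-cycle hypothesis with colors generating $\ZZ/m\ZZ$ is exactly the $k = 2$, $d = 1$ branch, giving $m \cdot H_0((a,b))$; expanding $H_0((a,b)) = (n-1)! \cdot a^a b^b/((a-1)!(b-1)!)$ and recognizing $(n-1)!/((a-1)!(b-1)!) = (n-1)\binom{n-2}{a-1,b-1}$ produces the claimed formula.

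The principal obstacle is the Hurwitz-transitivity step; without it one has only $\Ftr_W(g) \leq \Fred(g)$ and no direct handle on $\Fred(g)$ from Proposition~\ref{prop:formula for Ftr in the combinatorial case}. A secondary subtlety arises in case~(ii), where one must verify that rank-$n$ elements of $G(m,1,n)$ that are not single coprime-color $n$-cycles genuinely fail to admit a generating reduced factorization (so that the formula covers the entire quasi-Coxeter class and not just a subcase); this is an elementary but nontrivial combinatorial check using the two families of reflections in $G(m,1,n)$ and their possible reflection subgroups.
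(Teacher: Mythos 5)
Your proposal is correct. Note first that this paper does not actually prove the corollary---it is imported verbatim from \cite[Cor.~6.11]{DLM2}---so there is no in-text proof to compare against; but your reconstruction from the ingredients the paper does supply is sound and is essentially the route one expects. The two halves both check out: the equality $\ltr(g)=\lR(g)$ for quasi-Coxeter $g$ follows already from the trivial bound $\ltr(g)\geq\lR(g)$ together with the defining generating reduced factorization (your appeal to Corollary~\ref{Corol: lr(g)+ltr(g)>=2m} is harmless but not needed), and the crucial step $\Fred(g)=\Ftr_W(g)$ is exactly what Proposition~\ref{Prop: LW G(m, p, n) transitivity} delivers, since Hurwitz moves preserve the generated subgroup and one reduced factorization of $g$ generates $W$. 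Your ``secondary subtlety'' in case~(ii)---that a rank-$n$ element of $G(m,1,n)$ with a generating reduced factorization must be a single $n$-cycle whose color generates $\ZZ/m\ZZ$---does not require a fresh combinatorial check: it is the $\lambda_0=n$, $k=0$ instance of Theorem~\ref{thm:parabolic everything in G(m, p, n)}(ii), and likewise case~(iii) is the $\lambda_0=n$ instance of part~(iv) there. The arithmetic in all three cases (reading $a=1$, resp.\ $d=1$, off Proposition~\ref{prop:formula for Ftr in the combinatorial case} and simplifying $H_0(\lambda)$ via Theorem~\ref{thm:S_n genus 0}) is correct, including the identity $(n-1)!/((a-1)!(b-1)!)=(n-1)\binom{n-2}{a-1,\,b-1}$ in case~(iii).
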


The reduced reflection factorizations of quasi-Coxeter elements have a useful connectedness property that we describe next.  For any group $G$, a generator $\sigma_i$ of the $k$-strand braid group acts on $G^k$ via a \defn{Hurwitz move}:
\[
\sigma_i(g_1, \cdots, \quad g_i, \quad g_{i+1}, \quad \cdots,g_k)=(g_1,\cdots, \quad g_{i+1}, \quad g^{-1}_{i+1}g_ig_{i+1}, \quad \cdots,g_k),
\]
swapping two adjacent elements and conjugating one by the other. We call this the \defn{Hurwitz action} of the braid group on $G^k$.  Each Hurwitz move preserves the product $g_1 \cdots g_k$ of the tuple, the subgroup $
\langle g_1,\ldots,g_k\rangle$ that is generated by the elements $g_i$, and the multiset of conjugacy classes they determine. In particular, this means that for a reflection group $W$, there is a well defined Hurwitz action on the set of length-$k$ reflection factorizations of an element $g\in W$. The next result shows that this action is particularly nicely behaved in the infinite family.

\begin{proposition}[{\cite[Cor.~5.4]{LW}}]
\label{Prop: LW G(m, p, n) transitivity}
Let $W = G(m, p, n)$ and let $g\in W$ be an element with a reduced factorization that generates $W$. Then the Hurwitz action of the braid group on the set of reduced reflection factorizations of $g$ is transitive.
\end{proposition}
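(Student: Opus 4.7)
The plan is to proceed by induction on the rank $n$ of $W=G(m,p,n)$, reducing two reduced generating factorizations to a common form by Hurwitz moves and then invoking the inductive hypothesis on a smaller-rank reflection subgroup. The engine of the induction is the following reduction lemma, which I would establish as a preliminary step.

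\textbf{Key Lemma (proposed).} Let $(t_1,\ldots,t_k)$ be a reduced reflection factorization of $g$ with $\langle t_1,\ldots,t_k\rangle = W$, and let $s\in\RRR$ satisfy $s\leq_\RRR g$. Then some Hurwitz-equivalent factorization of $g$ has first factor equal to $s$.

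Granting this lemma, the proposition follows easily. Given two reduced generating factorizations $\ttt$ and $\ttt'$ of $g$, apply the lemma to $\ttt$ with $s=t'_1$ to obtain a Hurwitz-equivalent factorization $(t'_1, u_2,\ldots,u_k)$. Cancelling the first factors, $(u_2,\ldots,u_k)$ and $(t'_2,\ldots,t'_k)$ are now reduced reflection factorizations of the same element $(t'_1)^{-1}g$. Although this new element need not satisfy the generation hypothesis in $W$, it does so in its parabolic closure, which has strictly smaller rank; the inductive hypothesis then links the two tails by Hurwitz moves, and concatenating $t'_1$ on the left completes the equivalence. The base cases ($n=1$, where the group is abelian and the claim is trivial; or the case where $g$ is a diagonal element and the factorization length equals the rank of the toral part) are checked directly.

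To establish the Key Lemma, I would exploit the explicit combinatorial description of reflections in $G(m,p,n)$, classifying $s$ as either transposition-like $[(i j);k]$ or diagonal $[\id;kpe_i]$. For $g$ whose underlying permutation $\pi_{m/1}(g)$ has a single cycle, a reduced factorization can be modeled by a labeled tree on the cycle's support together with color data, and the Hurwitz moves act on this model by local transformations (edge flips, color transfers along a pivot) that one can chain to bring any prescribed reflection $s\leq_\RRR g$ into the first position. For multiple cycles, the argument proceeds cycle by cycle; transposition-like reflections acting within a single cycle are handled inside that block, while reflections joining two different cycles are used as pivots between blocks. The main obstacle is the case $W=G(m,m,n)$ (and more generally $p<m$): since the color of each cycle is no longer freely modifiable but only constrained modulo $p$, performing the Hurwitz moves that move $s$ to the front may require nontrivial ``color redistribution'' among the cycles. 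Showing that all admissible color distributions are mutually reachable by Hurwitz moves—consistent with the counts in Corollary~\ref{cor: Fred for infinite families}(iii) where the factor $m(n-1)$ encodes exactly this freedom—is the technical heart of the argument.
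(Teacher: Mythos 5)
First, a point of order: the paper does not prove Proposition~\ref{Prop: LW G(m, p, n) transitivity} at all. It is imported verbatim from \cite[Cor.~5.4]{LW} and used as a black box (for instance in Cases 1(a) and 2(a) of Section~\ref{Sec: proof main}), so there is no internal proof to compare against and your argument must stand on its own.

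As written, the inductive step has a genuine gap that is independent of the unproved Key Lemma. After matching first factors you obtain two reduced factorizations $(u_2,\ldots,u_k)$ and $(t'_2,\ldots,t'_k)$ of $h:=(t'_1)^{-1}g$ and assert that $h$ ``satisfies the generation hypothesis in its parabolic closure.'' This is false in general in $G(m,p,n)$: a reduced factorization of $h$ generates \emph{some} reflection subgroup containing $h$, but not necessarily the parabolic closure, and---worse---different reduced factorizations of the same $h$ can generate \emph{different} subgroups. Concretely, in $G(4,4,2)$ the central element $h=[\id;(2,2)]$ has trivial fixed space, so its parabolic closure is all of $G(4,4,2)$; yet its reduced factorizations are the pairs $\big([(12);a],[(12);a+2]\big)$, which generate one of two distinct Klein four-subgroups according to the parity of $a$. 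Since each Hurwitz move preserves the subgroup generated by the tuple, these factorizations fall into (at least) two Hurwitz orbits, and one checks directly that there are exactly two orbits of size two. So transitivity genuinely fails for such $h$, which is precisely why the proposition carries the hypothesis that $g$ admits a generating reduced factorization. Your induction therefore cannot simply pass to ``the parabolic closure of $h$'': you must first prove that the two tails generate the same reflection subgroup $W'$ and that $h$ satisfies the hypothesis of the proposition inside $W'$. Ruling out that a bad element like the one above ever occurs as $(t'_1)^{-1}g$, and normalizing the subgroup generated by the tail via Hurwitz moves on the full tuple, is essentially the same difficulty as the theorem itself and is nowhere addressed.

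Separately, the Key Lemma carries almost all of the remaining weight and is only gestured at. Since a Hurwitz move slides the factor in position $i+1$ to position $i$ unchanged, the lemma is equivalent to the assertion that every reflection $s\leq_{\RRR}g$ occurs as a factor somewhere in the Hurwitz orbit of the given factorization; proving this for $G(m,p,n)$---in particular the ``color redistribution'' among cycles that you correctly flag, and the case $1<p<m$ where only diagonal reflections of restricted color are available---is where the content of \cite{LW} actually lives. The overall architecture (peel off a common first factor, induct on the tail) is the classical one for $\Symm_n$, but both pillars on which it rests are exactly the points where the complex case departs from the symmetric group, and neither is established here.
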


\subsection{Parabolic subgroups and parabolic quasi-Coxeter elements}
\label{sec:parabolic}

If $W$ is a complex reflection group acting on $V$ and $U \subseteq V$ is any subset, the pointwise stabilizer $W_U$ is called a \defn{parabolic subgroup} of $W$.\footnote{In the case of a real reflection group, this is different from the usual definition that a parabolic subgroup is a subgroup generated by a subset of simple reflections (as in, for example, \cite[\S2.4]{BjornerBrenti}); these subgroups are sometimes called \emph{standard parabolic subgroups}. The collection of parabolic subgroups (under our definition) agrees with the collection of all subgroups conjugate to a standard parabolic---see \cite[\S5-2]{Kane}.}  While it is not obvious from the definition, the next theorem shows that parabolic subgroups are reflection subgroups.

\begin{theorem}[{Steinberg's theorem \cite[Thm.~1.5]{Steinberg}}]\label{Thm: steinberg}
Let $W$ be a complex reflection group acting on $V$ and $U\subseteq V$ a subset. Then the pointwise stabilizer of $U$ is a reflection group, generated by those reflections in $W$ whose reflection hyperplanes contain $U$.
\end{theorem}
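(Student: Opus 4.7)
The plan is to reduce the theorem to the special case where $U$ is a single point, and then to prove the resulting statement (that every point-stabilizer $W_v$ is generated by the reflections whose hyperplane passes through $v$) by combining the Chevalley-Shephard-Todd theorem with its local converse.

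First I would reduce to a single-point subset. Replacing $U$ with its linear span does not change either the pointwise stabilizer $W_U$ or the set $\{t \in \RRR : U \subseteq V^t\}$, so I may assume $U$ is a subspace. Next I would pick $v \in U$ generic in the sense that $v \notin V^w$ for any $w \in W \setminus W_U$. Such a $v$ exists because $\bigcup_{w \in W \setminus W_U} (U \cap V^w)$ is a finite union of proper subspaces of $U$, proper since $w \notin W_U$ forces $V^w \not\supseteq U$, whence $V^w \cap U \subsetneq U$. A routine check then gives $W_v = W_U$ and $\{t \in \RRR : v \in V^t\} = \{t \in \RRR : U \subseteq V^t\}$, reducing the theorem to showing $W_v = \langle t \in \RRR : v \in V^t\rangle$ for an arbitrary $v \in V$.

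For this core statement I would invoke Chevalley-Shephard-Todd: since $W$ is a reflection group, its invariant ring $\mathbb{C}[V]^W$ is polynomial, and so the quotient variety $V/W$ is smooth. The completed local ring of $V/W$ at the image $\bar v$ of $v$ can be identified with $\widehat{\mathcal{O}}_{V, v}^{W_v}$: the finitely many points of the orbit $W \cdot v$ admit pairwise disjoint formal neighborhoods, and only the stabilizer $W_v$ acts nontrivially on a formal neighborhood of $v$. Smoothness of $V/W$ thus forces $\widehat{\mathcal{O}}_{V, v}^{W_v}$ to be regular, i.e., the action of the finite linear group $W_v$ on the tangent space $T_v V \cong V$ has polynomial invariant ring. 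By the local converse to Chevalley-Shephard-Todd, this forces $W_v$ to be generated by pseudoreflections in $\GL(V)$; since $W_v \leq W$, these are precisely the reflections of $W$ whose hyperplane contains $v$, as required.

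The main obstacle is this second step: identifying $\widehat{\mathcal{O}}_{V/W, \bar v}$ with $W_v$-invariants in $\widehat{\mathcal{O}}_{V, v}$, and then applying the converse direction of Chevalley-Shephard-Todd in its local form. The genericity reduction of the first step is essentially formal bookkeeping, whereas the identification of point-stabilizers with reflection-generated subgroups is the true content of the theorem. An alternative, more elementary route (available in the real case) would replace the second step by a chamber argument: take a chamber $C$ with $v \in \overline{C}$, observe that the walls of $C$ through $v$ generate a parabolic subgroup containing $W_v$, and use the standard reduced-expression theory for Coxeter groups to conclude equality; but this approach does not generalize cleanly to the complex setting, which is why I favor the invariant-theoretic argument above.
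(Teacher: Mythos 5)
The paper offers no proof of this statement: it is quoted directly from Steinberg \cite[Thm.~1.5]{Steinberg}, so there is no internal argument to compare against. Your proposal is a correct sketch of the standard invariant-theoretic proof, and it is a genuinely different route from Steinberg's original argument (an elementary induction on $\codim(V^w)$ using eigenvalue/averaging considerations, no invariant theory). Your reductions are fine: passing from $U$ to its span and then to a generic $v$ avoiding the finitely many proper subspaces $U \cap V^w$, $w \notin W_U$, is routine, and the identification $\widehat{\OOO}_{V/W,\bar v}\cong(\widehat{\OOO}_{V,v})^{W_v}$ is the standard formal-neighborhood computation for finite quotients (the completion of $\CC[V]$ at the maximal ideal of $\bar v$ in $\CC[V]^W$ splits as a product over the orbit $W\cdot v$, which $W$ permutes transitively with stabilizer $W_v$). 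Two points should be made explicit in a full write-up. First, since $W_v$ acts linearly on $V$ and fixes $v$, translating $v$ to the origin shows that the induced action on $\widehat{\OOO}_{V,v}$ is exactly the linear action on $T_vV\cong V$; no linearization lemma is needed here, but this is the reason the ``tangent-space action'' you invoke is legitimate. Second, the passage from ``$(\widehat{\OOO}_{V,v})^{W_v}$ is regular'' to ``the invariant ring of $W_v$ on $V$ is polynomial'' uses that a graded algebra is polynomial if and only if its completion at the irrelevant ideal is regular; only after that bridge does the converse of Chevalley--Shephard--Todd (itself a nontrivial theorem of Shephard--Todd and Serre) apply to show $W_v$ is generated by its pseudo-reflections, which are precisely the reflections of $W$ fixing $v$. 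With those two remarks supplied, the argument is complete; what each approach buys is that yours generalizes cleanly to all complex reflection groups in one stroke, while Steinberg's original is self-contained and avoids the machinery of invariant theory.
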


For any element $g\in W$, we define the \defn{parabolic closure} $W_g$ of $g$ to be the intersection of all parabolic subgroups that contain $g$. In this case the flat $X$ indexing $W_g$ is the fixed space $X=V^g$, and in particular we have $\rank(W_g)=\codim(V^g)$. We next record some facts from the literature concerning the behavior of parabolic subgroups that will be of use to us later.

\begin{proposition}[{\cite[Thm.~4.1]{taylor}}]
\label{Prop: Taylor_K=<H,t>}
Assume that $W$ is a complex reflection group and that $K$ and $H$ are reflection subgroups of $W$ such that $K=\langle H,t\rangle$ for some reflection $t\in W$. If $K$ is parabolic and its rank is greater than that of $H$, then $H$ is also parabolic.
\end{proposition}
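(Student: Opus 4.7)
The plan is to first reduce to the case $W = K$, and then to identify $H$ with the parabolic closure $W_{V^H}$ using Steinberg's theorem. For the reduction, note that parabolic subgroups of a parabolic subgroup are themselves parabolic in the ambient group: if $K = W_U$ and $L$ is the pointwise stabilizer in $K$ of some $U' \supseteq U$, then $L = W_{U'}$. So we may replace $W$ by $K$ and assume that $W$ itself is generated by $H$ together with the single reflection $t$, with $\rank(H) = \rank(W) - 1$, and aim to show $H$ is parabolic in $W$.

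Under this reduction, write $X := V^H$. Since $V^t$ is a hyperplane not containing $X$ (because $X \cap V^t = V^W$ has strictly smaller dimension than $X$), the flat $X$ lies exactly one dimension above $V^W$ in the intersection lattice of $W$, and so $W_X$ is a corank-$1$ parabolic of the same rank as $H$. By Steinberg's theorem (Theorem~\ref{Thm: steinberg}), $W_X$ is generated by all reflections $s \in W$ with $V^s \supseteq X$. We have the inclusion $H \subseteq W_X$, and the goal becomes to establish the reverse inclusion, which by Steinberg is equivalent to $H$ being parabolic.

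To prove the reverse inclusion I would proceed by contradiction: suppose $H \subsetneq W_X$ and pick a reflection $s \in W_X \setminus H$. Since $W = \langle H, t\rangle$, the reflection $s$ admits an expression $s = w_0 t^{a_1} w_1 \cdots t^{a_k} w_k$ with $w_i \in H$ and each $a_i \in \ZZ$ not a multiple of the order of $t$. Take such a factorization of minimal ``$t$-length'' $k$, and exploit the asymmetry that $s$ fixes $X$ pointwise while $t$ does not: by Hurwitz-moving (cf. Section~\ref{sec:cox and q-cox}) within $\langle H, t\rangle$ one can collect the powers of $t$, and the accumulated power must then fix $X$, forcing $k = 0$ and hence $s \in H$. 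The main obstacle is to make this rearrangement work uniformly across all complex reflection groups; the transitivity of the Hurwitz action on reduced reflection factorizations is available for the infinite family $G(m,p,n)$ by Proposition~\ref{Prop: LW G(m, p, n) transitivity}, while for the exceptional complex reflection groups one would instead invoke the Shephard--Todd classification and known structural results on their lattices of parabolic subgroups to handle these remaining cases directly.
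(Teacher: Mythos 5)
Your setup is sound: the reduction to the case $W=K$ correctly uses the parabolicity of $K$ (a pointwise stabilizer inside a pointwise stabilizer is again a pointwise stabilizer of the ambient space), and identifying the goal as the equality $H=W_X$ for $X:=V^H$, via Steinberg's theorem, is the right reformulation. Note that the paper itself does not prove this proposition at all --- it is imported verbatim from Taylor's \cite[Thm.~4.1]{taylor} --- so you are attempting genuinely new work here, and the inclusion $W_X\subseteq H$ is exactly the hard content of Taylor's theorem.

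That decisive step is where your argument breaks down. Writing $s=w_0t^{a_1}w_1\cdots t^{a_k}w_k$ and ``collecting the powers of $t$'' does not produce an accumulated power of $t$: pushing the $H$-factors to one side replaces each $t^{a_i}$ by a conjugate $u_it^{a_i}u_i^{-1}$ with $u_i\in H$, and these are reflections about \emph{different} hyperplanes, so there is nothing to accumulate. (The Hurwitz action of Section~\ref{sec:cox and q-cox} acts on tuples of reflections whose product is a fixed element, and Proposition~\ref{Prop: LW G(m, p, n) transitivity} concerns \emph{reduced} reflection factorizations; neither applies to an arbitrary word of minimal $t$-length.) Even granting a rewriting $s=r_1\cdots r_kh$ with $h\in H$ and each $r_j$ a reflection whose hyperplane does not contain $X$, the hypothesis that $s$ fixes $X$ pointwise does not force $k=0$: a product of reflections none of which fixes $X$ can perfectly well fix $X$ (already $tt^{-1}=\id$ does), so minimality of $k$ yields no contradiction without substantially more structure. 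Finally, the fallback of invoking ``known structural results on the lattices of parabolic subgroups'' of the exceptional groups is circular --- the structural result one would need is precisely the proposition being proved. The statement is true but requires Taylor's considerably more involved argument; as written, your proof has a gap at its central step.
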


\begin{lemma}
\label{Lem: g->gt}
Let $W$ be a real reflection group, $g$ any of its elements, and $W_g$ the parabolic closure of $g$. For a reflection $t\in W$, the following hold:
\begin{enumerate}
\item $t\leq_{\RRR} g$ if and only if $V^t\supset V^g$.
\item $t\leq_{\RRR} g$ if and only if $t\in W_g$.
\item $\lR(gt)=\lR(g)-1$ if and only if $t\in W_g$.
\item $\lR(gt)=\lR(g)+1$ if and only if $t\notin W_g$.
\end{enumerate} 
\end{lemma}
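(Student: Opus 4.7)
The plan is to establish (1) first and then derive (2), (3), and (4) essentially formally. The two key ingredients from Section~\ref{sec:reflen} are the geometric identity $\lR(w) = \codim(V^w)$ for real reflection groups, and the parity observation that every reflection has determinant $-1$, so $\det(g) = (-1)^{\lR(g)}$; this forces $\lR(g)$ and $\lR(gt)$ to have opposite parities and hence $\lR(gt) \in \{\lR(g) - 1,\, \lR(g) + 1\}$ with exactly one of the two alternatives holding.

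For (1), the forward direction $t \leq_{\RRR} g \Rightarrow V^t \supseteq V^g$ comes from extending $t$ to a reduced factorization $g = t \cdot t_2 \cdots t_k$ of length $k = \lR(g) = \codim V^g$: the partial intersections of the reflecting hyperplanes $V^{t_i}$ can drop dimension by at most one per step, and by the dimension count $V^g$ is exactly their full intersection, which lies in $V^t$. The converse is the main obstacle of the lemma, and is essentially the Brady--Watt/Carter-type argument: if $V^t \supseteq V^g$, then every vector in $V^g$ is fixed by both $g$ and $t$, so it is fixed by $tg$, giving $V^g \subseteq V^{tg}$ and hence $\lR(tg) \leq \lR(g)$. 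The parity observation then upgrades this to $\lR(tg) = \lR(g) - 1$, i.e.\ $t \leq_{\RRR} g$.

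Part (2) follows by combining (1) with Steinberg's theorem (Theorem~\ref{Thm: steinberg}). As noted in Section~\ref{sec:parabolic}, the parabolic closure $W_g$ is the pointwise stabilizer of $V^g$, and by Steinberg this is generated precisely by those reflections whose fixed hyperplanes contain $V^g$. So for a reflection $t$ we have $t \in W_g \iff V^t \supseteq V^g$, which by (1) is equivalent to $t \leq_{\RRR} g$.

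Parts (3) and (4) are then quick. The conjugation identity $t(gt)t^{-1} = tg$ gives $\lR(gt) = \lR(tg)$, and the definition of $\leq_{\RRR}$ together with $t^{-1} = t$ rewrites $t \leq_{\RRR} g$ as $\lR(tg) = \lR(g) - 1$; combined with (2) this proves (3). Finally, the parity observation gives $\lR(gt) = \lR(g) + 1 \iff \lR(gt) \neq \lR(g) - 1$, which by (3) is equivalent to $t \notin W_g$, yielding (4).
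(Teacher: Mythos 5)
Your proof is correct and follows essentially the same route as the paper: parts (2)--(4) are derived from (1) via Steinberg's theorem, the definition of $\leq_{\RRR}$, and the parity constraint, exactly as in the paper's own proof. The only difference is that where the paper cites Carter's lemma for part (1), you reproduce the Carter/Brady--Watt dimension-counting argument in full (both directions, using $\lR(w)=\codim(V^w)$ for real groups), which is a valid, self-contained substitute.
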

\begin{proof}
We will first show that all four statements are equivalent to each other. Indeed, parts (1) and (2) are equivalent after Steinberg's theorem (Theorem~\ref{Thm: steinberg}) and since $W_g=W_{V^g}$. By definition (see \S\ref{sec:reflen}), we have that $t\leq_{\RRR} g$ if and only if $\lR(gt)=\lR(g)-1$. Since moreover by parity considerations the only two possibilities for $\lR(gt)$ are the ones listed in (3) and (4), all four statements are equivalent to each other.
Now, the first statement is implicit in the proof of \cite[Lem.~2]{carter} (which is stated for Weyl groups but works verbatim for all real reflection groups \cite[Lem.~1.2.1]{bessis-dual-braid}).
\end{proof}

If $W$ is a complex reflection group, $H$ is a parabolic subgroup, and $g$ is a quasi-Coxeter element for $H$, then we say that $g$ is a \defn{parabolic quasi-Coxeter element} for $W$.  These elements have many attractive properties and were the main object of study in \cite{DLM2}.  In the case that $W$ is a Weyl group, the following theorem characterizes the parabolic quasi-Coxeter elements in $W$.

\begin{theorem}[{\cite[Thm.~3.4]{DLM2}}] 
\label{Prop: pqCox characterization for Weyl W}
Let $W \leq \GL(V)$ be a Weyl group of rank $n$, $g$ an element of $W$, and $W_g$ the parabolic closure of $g$. Then the following statements are equivalent. 
\begin{enumerate}[(i)]
    \item $g$ is a quasi-Coxeter element (respectively, parabolic quasi-Coxeter element).
    \item There exists a reduced reflection factorization $g=t_1\cdots t_k$ for which the associated roots $\rho_{t_i}$ and coroots $\widecheck{\rho}_{t_i}$ form $\ZZ$-bases of the root and coroot lattices of $W$ (resp., of $W_g$).
    \item $g$ satisfies $|\det(g - \III_V)|=I(W)$, where $\III_V$ is the identity on $V$ and $I(W)$ the connection index of $W$ (resp., $|\op{pdet}(g - \III_V)|=I(W_g)$ where $\op{pdet}$ denotes the pseudo-determinant).
    \item $g$ does not belong to any proper reflection subgroup of $W$ (resp., of $W_g$).
\end{enumerate}
\end{theorem}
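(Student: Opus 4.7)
The plan is to establish the cycle of implications (i) $\Rightarrow$ (ii) $\Rightarrow$ (iii) $\Rightarrow$ (iv) $\Rightarrow$ (i) for the non-parabolic statement, and then to recover the parabolic version by applying this result to $W_g$, which is itself a Weyl group (acting faithfully on $(V^g)^\perp$) for which $g$ is by definition quasi-Coxeter.

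For (i) $\Rightarrow$ (ii), the key input is the transitivity of the Hurwitz action on reduced reflection factorizations of a quasi-Coxeter element in a Weyl group (the Weyl-group analog of Proposition~\ref{Prop: LW G(m, p, n) transitivity}). A Hurwitz move replaces a pair $(t_i, t_{i+1})$ by $(t_{i+1}, t_{i+1}^{-1} t_i t_{i+1})$, and so preserves the $\ZZ$-spans of the associated roots and coroots. It therefore suffices to verify (ii) on one reduced factorization, and one can pick a factorization in simple reflections (after conjugation), for which the conclusion is tautological.

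For (ii) $\Rightarrow$ (iii), I would use the rank-one decomposition $t_i = \III_V - \rho_{t_i}\otimes \widecheck{\rho}_{t_i}$ and expand the product $g = t_1 \cdots t_n$ to express $g - \III_V$ on $(V^g)^\perp$ in the basis $(\rho_{t_i})$. A direct calculation yields $|\det(g - \III_V)| = |\det(\langle \rho_{t_i}, \widecheck{\rho}_{t_j}\rangle)_{i,j}|$, and under the hypothesis of (ii) this Gram determinant is the determinant of the change-of-basis matrix between $\mathcal{Q}$ and $(\widecheck{\mathcal{Q}})^\ast = \mathcal{P}$, so equals $[\mathcal{P}:\mathcal{Q}] = I(W)$.

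For (iii) $\Rightarrow$ (iv), I would argue contrapositively. If $g$ lies in a proper reflection subgroup $W' \subsetneq W$, one first reduces to the case $\rank(W') = n$ (otherwise $g$ has a nontrivial fixed vector and $\det(g - \III_V) = 0$). In that case $W'$ is a Weyl group with root lattice $\mathcal{Q}' \subsetneq \mathcal{Q}$ of strictly smaller connection index, and the same Gram-determinant computation applied to a reduced factorization of $g$ in $W'$ forces $|\det(g-\III_V)|$ to be a proper divisor of $I(W)$. Finally, (iv) $\Rightarrow$ (i) is essentially tautological: any reduced factorization $g = t_1\cdots t_n$ generates a reflection subgroup containing $g$, which by (iv) must be all of $W$, so $g$ is quasi-Coxeter. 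The main obstacle is the Hurwitz-transitivity input for (i) $\Rightarrow$ (ii), since Proposition~\ref{Prop: LW G(m, p, n) transitivity} only covers the infinite family $G(m,p,n)$; for exceptional Weyl groups one must invoke the general transitivity result established in \cite{DLM2}, or else verify the $\ZZ$-basis property directly by a case-by-case analysis in each Dynkin type.
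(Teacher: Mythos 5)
A preliminary remark: this paper does not prove Theorem~\ref{Prop: pqCox characterization for Weyl W} at all --- it is imported verbatim from \cite[Thm.~3.4]{DLM2} --- so there is no in-paper proof to compare against, and your proposal has to be judged on its own terms. The overall shape ((i)$\Rightarrow$(ii)$\Rightarrow$(iii)$\Rightarrow$(iv)$\Rightarrow$(i), then deducing the parabolic version by applying the absolute version to the Weyl group $W_g$) is reasonable, the Gram-determinant computation in (ii)$\Rightarrow$(iii) is correct, and (iv)$\Rightarrow$(i) is fine. But two steps have genuine problems.

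The base case of your (i)$\Rightarrow$(ii) fails. A quasi-Coxeter element is in general \emph{not} conjugate to a product of the simple reflections --- that is the entire point of the notion (in $D_4$, $E_7$, $E_8$ there are quasi-Coxeter elements that are not Coxeter elements) --- so ``pick a factorization in simple reflections (after conjugation), for which the conclusion is tautological'' is not available. Hurwitz transitivity cannot repair this: the only reduced factorization the definition of quasi-Coxeter hands you is one whose factors \emph{generate} $W$, and for that factorization the $\ZZ$-basis property is precisely the nontrivial content of Baumeister--Wegener (Proposition~\ref{Prop: BW charn of gen sets} in this paper), not a tautology. The efficient route is to quote that proposition directly: a set of $n$ reflections generates $W$ if and only if its roots and coroots form $\ZZ$-bases of $\mathcal{Q}$ and $\widecheck{\mathcal{Q}}$, which yields (i)$\Leftrightarrow$(ii) in one line with no Hurwitz action at all; the case-by-case burden you mention at the end is then hidden inside \cite{BW}, which is also where \cite{DLM2} puts it.

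In (iii)$\Rightarrow$(iv) the divisibility goes the wrong way, and the root lattice alone does not suffice. If $g$ lies in a full-rank proper reflection subgroup $W'$ and $g=t'_1\cdots t'_n$ is reduced there, then the lattice $L$ spanned by the roots $\rho_{t'_i}$ sits \emph{inside} $\mathcal{Q}$ and the span $\widecheck{L}$ of the coroots sits inside $\widecheck{\mathcal{Q}}$, so $|\det(g-\III_V)|=[\widecheck{L}^{\ast}:L]=[\widecheck{L}^{\ast}:\mathcal{P}]\cdot I(W)\cdot[\mathcal{Q}:L]$ is a proper integer \emph{multiple} of $I(W)$, not a proper divisor. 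Moreover one cannot argue from $\mathcal{Q}'\subsetneq\mathcal{Q}$ alone: the short-root copy of $A_1\times A_1$ inside $B_2$ is a proper full-rank reflection subgroup whose roots span all of $\mathcal{Q}(B_2)$; there it is the coroot index that exceeds $1$. That the two indices cannot both equal $1$ for a proper subgroup is again exactly Proposition~\ref{Prop: BW charn of gen sets}. The contrapositive you want still holds (either way the determinant differs from $I(W)$), but as written the step asserts something false and omits the coroot half of the argument.
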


Just as a permutation can be decomposed as a product of disjoint cycles, a parabolic quasi-Coxeter element $g$ with parabolic closure $W_g = W_1 \times \cdots \times W_r$ can be decomposed as a product $g = g_1 \cdots g_r$ where each $g_i$ is quasi-Coxeter for the irreducible factor $W_i$.  This \defn{generalized cycle decomposition} is the unique expression for $g$ as a reflection-length-additive product of commuting factors that cannot be further decomposed \cite[Prop.~3.13]{DLM2}.

Every parabolic subgroup of the symmetric group $\Symm_n$ is conjugate to a Young subgroup 
\[
\Symm_{\lambda}:=\Symm_{\lambda_1}\times\cdots\times\Symm_{\lambda_k},
\]
for some partition $\lambda$ of $n$, and the parabolic quasi-Coxeter elements associated with subgroups of type $\Symm_\lambda$ are the elements of cycle type $\lambda$.  The following theorem extends this description to cover parabolic quasi-Coxeter elements in the infinite family.  It is a combination of \cite[Thm.~3.11]{taylor} (which gives the parabolic subgroups), \cite[Cor.~3.16]{DLM2} (which gives the corresponding parabolic quasi-Coxeter elements), and \cite[Thm.~4.4]{Shi2007} (which gives a combinatorial formula for reflection length in $G(m, p, n)$).

\begin{theorem}
\label{thm:parabolic everything in G(m, p, n)}
For $m > 1$, every parabolic subgroup of $G(m, p, n)$ is either conjugate to a subgroup of the form 
\[
G(m, p, \lambda_0) \times \Symm_{\lambda_1} \times \cdots \times \Symm_{\lambda_k}
\]
for some partition\footnote{Note that $\lambda_0$ could be out of order in $\lambda$.} $(\lambda_0, \ldots, \lambda_k)$ of $n$ or conjugate by an element of $G(m, 1, n)$ to a subgroup of the form
\[ 
\Symm_{\lambda_1} \times \cdots \times \Symm_{\lambda_k}
\]
for some partition $(\lambda_1, \ldots, \lambda_k)$ of $n$.  If $g$ is an element of $G(m, 1, n)$, then
\begin{enumerate}[(i)]
\item $g$ is parabolic quasi-Coxeter for a subgroup of type $\Symm_{\lambda_1} \times \cdots \times \Symm_{\lambda_k}$ if and only if $g$ has cycles of lengths $\lambda_1, \ldots, \lambda_k$, all of color~$0$, and 
\item $g$ is parabolic quasi-Coxeter for a subgroup of type $G(m, 1, \lambda_0) \times \Symm_{\lambda_1} \times \cdots \times \Symm_{\lambda_k}$ if and only if $g$ has one cycle of length $\lambda_0$ whose color generates $\ZZ/m\ZZ$ and $k$ cycles of lengths $\lambda_1, \ldots, \lambda_k$ and color $0$. 
\end{enumerate}
If instead that $g$ is an element of $G(m, m, n)$, then 
\begin{enumerate}[(i)]
\addtocounter{enumi}{2}
\item $g$ is parabolic quasi-Coxeter for a subgroup of type $\Symm_{\lambda_1} \times \cdots \times \Symm_{\lambda_k}$ if and only if $g$ has cycles of lengths $\lambda_1, \ldots, \lambda_k$, all of color~$0$, and
\item $g$ is parabolic quasi-Coxeter for a subgroup of type $G(m, m, \lambda_0) \times \Symm_{\lambda_1} \times \cdots \times \Symm_{\lambda_k}$ if and only if $g$ has two cycles whose colors generate $\ZZ/m\ZZ$ and sum to $0$ and whose lengths add to $\lambda_0$, and has cycles of lengths $\lambda_1, \ldots, \lambda_k$ of color $0$.
\end{enumerate}
In all four cases, the reflection length of $g$ is $\lR(g) = n - k$.
In all cases except the last, the generalized cycles of $g$ are the cycles of $g$; in the case that $W_g \cong G(m, m, \lambda_0) \times \Symm_{\lambda_1} \times \cdots \times \Symm_{\lambda_k}$, the two cycles of nonzero color in $g$ together form a generalized cycle.
\end{theorem}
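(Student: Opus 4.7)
The plan is to assemble three existing results. The classification of parabolic subgroups up to conjugacy is precisely Taylor's theorem \cite[Thm.~3.11]{taylor}, and the final reflection length formula $\lR(g)=n-k$ is Shi's combinatorial formula \cite[Thm.~4.4]{Shi2007} applied to each of the four configurations. The substantive task is to verify the color criteria (i)--(iv) characterizing parabolic quasi-Coxeter elements, for which I would invoke \cite[Cor.~3.16]{DLM2}: an element $g$ is parabolic quasi-Coxeter in $W$ if and only if it is quasi-Coxeter for its parabolic closure $W_g$, equivalently $g$ admits a reduced reflection factorization whose factors generate $W_g$.

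First I would identify $W_g$ from the cycle-and-color data of $g$. A cycle of length $\lambda_i$ and color $0$ is a Coxeter element of a standard $\Symm_{\lambda_i}$ sitting inside $G(m,p,n)$, while a cycle of length $\lambda_0$ with nonzero color lies in a conjugate of $G(m,1,\lambda_0)$ and properly so (it cannot be trapped in a Young subgroup). In $G(m,m,n)$ the total color of $g$ must vanish, so nonzero-color cycles necessarily pair up; two such cycles whose lengths sum to $\lambda_0$ jointly sit in a conjugate of $G(m,m,\lambda_0)$. Combined with Taylor's list, this accounts for the two shapes of $W_g$ appearing in (i)--(iv).

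The core of the proof is then to check case by case that the stated color conditions are equivalent to the quasi-Coxeter criterion. In case (ii), a reduced reflection factorization of a length-$\lambda_0$, color-$c$ cycle can be chosen to use $\lambda_0-1$ transposition-like reflections of color $0$ together with one diagonal reflection of color $c$; these generate $G(m,1,\lambda_0)$ exactly when $\langle c\rangle=\ZZ/m\ZZ$, otherwise the factors lie in the proper reflection subgroup $G(m,m/\gcd(c,m),\lambda_0)$. The analogous argument for (iv) combines two cycles of colors $c_1,c_2$ with $c_1+c_2\equiv 0\pmod m$ and shows that the subgroup generated by a reduced factorization equals $G(m,m,\lambda_0)$ iff $\langle c_1,c_2\rangle=\ZZ/m\ZZ$; generation of each $\Symm_{\lambda_i}$ block from a color-$0$ cycle is automatic. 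The main obstacle I anticipate is ruling out that some smaller reflection subgroup of the claimed $W_g$ could still contain $g$; this is handled by the color-generation criterion together with Proposition~\ref{Prop: Taylor_K=<H,t>}, which ensures that adding further reflections to a non-generating set either stays inside a proper parabolic or enlarges the rank. The generalized cycle description is then immediate from uniqueness of the generalized cycle decomposition \cite[Prop.~3.13]{DLM2} applied to the irreducible factorization of $W_g$: each color-$0$ cycle is the quasi-Coxeter factor of its own $\Symm_{\lambda_i}$, while in case (iv) the two nonzero-color cycles jointly form the unique quasi-Coxeter factor of the irreducible $G(m,m,\lambda_0)$ component and hence a single generalized cycle.
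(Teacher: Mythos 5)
Your proposal matches the paper's treatment exactly: the paper offers no independent proof of this theorem, describing it only as ``a combination of'' \cite[Thm.~3.11]{taylor} (the classification of parabolic subgroups), \cite[Cor.~3.16]{DLM2} (the color criteria for the corresponding parabolic quasi-Coxeter elements), and \cite[Thm.~4.4]{Shi2007} (the reflection-length formula) --- the same three ingredients you assemble in the same roles. Your additional case-by-case verification of the color criteria is a sound re-derivation of the content of \cite[Cor.~3.16]{DLM2} rather than a departure from the paper (the only quibble being that when $\gcd(c,m)=d>1$ the factors in case (ii) generate a copy of $G(m/d,1,\lambda_0)$, not $G(m,m/d,\lambda_0)$, though either way it is a proper reflection subgroup and your conclusion stands).
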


\begin{remark}[{essentially \cite[Rem.~6.15]{DLM2}}]
\label{rem:Fred extends}

By \cite[Prop.~2.11, Cor.~2.12, Prop.~3.3]{DLM2}, if $g$ is a parabolic quasi-Coxeter element in $W$ with parabolic closure $W_g$, then $\lR[W](g) = \lR[W_g](g)$ and $\Fred[W](g) = \Fred[W_g](g)$.  Therefore the formulas in Corollary~\ref{cor: Fred for infinite families} extend to give the numbers $\Fred[W](g)$ for any \emph{parabolic} quasi-Coxeter element in the infinite family.
    
\end{remark}

\subsection{Reflection generating sets}
\label{sec:rgs} 

In this section we include the results on reflection generating sets that we will use in the proofs of our main theorems. 

In \cite[\S1]{DLM2} we introduced the following combinatorial objects that appear in our uniform formulas. A \defn{good generating set} for a complex reflection group $W$ is a set of $\rank(W)$-many reflections that generate the full group $W$; the collection of good generating sets is denoted \defn{$\RGS(W)$}. Only well generated groups possess good generating sets. The next proposition shows that good generating sets are intimately related to quasi-Coxeter elements.

\begin{proposition}[{\cite[Cor.~5.5]{DLM2}}]
\label{Prop: prod_t_i is q-Cox}
Let $\{ t_1,\ldots, t_n\}$ be a good generating set for a well generated complex reflection group $W$ of rank $n$.  Then the product of the generators $ t_i$ in any order is a quasi-Coxeter element for $W$.
\end{proposition}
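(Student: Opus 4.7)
The plan is to verify the two defining conditions of a quasi-Coxeter element directly for the product $g := t_{\sigma(1)} \cdots t_{\sigma(n)}$, where $\sigma$ is any permutation of $\{1,\ldots,n\}$. One of these conditions---that some reduced reflection factorization of $g$ generates $W$---will follow for free once we know the given factorization is reduced, since by hypothesis $\langle t_1, \ldots, t_n\rangle = W$ (and this is independent of the ordering). All of the content therefore lies in showing that the factorization is reduced, i.e., $\lR(g) \geq n$.

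The first step is to observe that the roots $\rho_{t_1}, \ldots, \rho_{t_n}$ form a basis of $(V^W)^\perp$. Each reflection $t_i$ fixes pointwise the hyperplane $\Span(\rho_{t_i})^\perp$, so $W = \langle t_1,\ldots,t_n\rangle$ fixes pointwise the intersection $\Span(\rho_{t_1}, \ldots, \rho_{t_n})^\perp$. Hence this intersection is contained in $V^W$, and taking orthogonal complements gives $(V^W)^\perp \subseteq \Span(\rho_{t_1}, \ldots, \rho_{t_n})$. Since $\dim (V^W)^\perp = n = \rank(W)$ and we have exactly $n$ spanning vectors, they must be linearly independent.

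Next I would show $V^g = V^W$ by a telescoping argument. For $v \in V^g$ set $u_k := t_{\sigma(k+1)} \cdots t_{\sigma(n)}(v)$ so that $u_n = v$ and $u_0 = g(v) = v$. Using the reflection formula $t(x) = x - \langle x, \widecheck{\rho}_t\rangle \rho_t$, one computes $u_{k-1} - u_k = -\langle u_k, \widecheck{\rho}_{t_{\sigma(k)}}\rangle \rho_{t_{\sigma(k)}}$; summing telescopes to $\sum_{k=1}^n \langle u_k, \widecheck{\rho}_{t_{\sigma(k)}}\rangle \rho_{t_{\sigma(k)}} = 0$, and linear independence of the roots forces every inner product to vanish. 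A descending induction starting at $k = n$ (where the vanishing of $\langle v, \widecheck{\rho}_{t_{\sigma(n)}}\rangle$ gives $t_{\sigma(n)}(v) = v$, so $u_{n-1} = v$, which in turn makes the next vanishing inner product read as $\langle v, \widecheck{\rho}_{t_{\sigma(n-1)}}\rangle = 0$, and so on) then shows $t_i(v) = v$ for every generator, whence $v \in V^W$.

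Finally, we invoke the general bound $\lR(g) \geq \codim(V^g)$, which holds because each $t_i - \III_V$ has rank $1$, so $g - \III_V$ decomposes as a sum of at most $\lR(g)$ rank-one operators. Combined with $V^g = V^W$ this yields $\lR(g) \geq n$, matching the length-$n$ factorization we started with and certifying that $g$ is quasi-Coxeter. I expect the most delicate piece to be the descending induction itself; it works precisely because once $u_{n-1}$ collapses back to $v$, the telescoping identity exposes the \emph{next} inner product $\langle v, \widecheck{\rho}_{t_{\sigma(n-1)}}\rangle$ as the one forced to vanish, propagating fixedness up the chain. The whole argument is manifestly symmetric in the $t_i$, so any ordering $\sigma$ produces a quasi-Coxeter element.
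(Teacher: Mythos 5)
Your argument is correct, but it is worth noting that the paper does not actually prove this proposition: it is imported from \cite[Cor.~5.5]{DLM2}, and the machinery quoted in Section~\ref{sec:reflen} already yields a two-line deduction. Indeed, the product $g$ of a good generating set comes equipped with a full factorization of length $n=\rank(W)$, so $\ltr(g)\le n$; Corollary~\ref{Corol: lr(g)+ltr(g)>=2m} gives $\lR(g)+\ltr(g)\ge 2n$, whence $\lR(g)\ge n$, the factorization is reduced and generating, and $g$ is quasi-Coxeter. What you do instead is prove the inequality $\lR(g)\ge n$ from scratch, by a unitary-reflection version of Carter's lemma (linearly independent roots imply the factorization is reduced): you correctly deduce linear independence of the roots from $\bigcap_i V^{t_i}\subseteq V^W$ plus a dimension count, the telescoping identity and descending induction use only the paper's normal form $t(v)=v-\langle v,\widecheck{\rho}_t\rangle\rho_t$ and the fact that $\langle u,\widecheck{\rho}_t\rangle=0$ is equivalent to $t(u)=u$, and the final bound $\lR(g)\ge\codim(V^g)$ follows from the standard rank-one telescoping of $g-\III_V$. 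Every step survives the passage from the real to the Hermitian setting, so the proof is complete. The trade-off is clear: your route is longer but entirely self-contained (no appeal to the earlier papers in the series), and as a by-product it establishes the useful facts that the roots of any good generating set form a basis of $(V^W)^\perp$ and that $V^g=V^W$; the cited route is shorter but rests on Corollary~\ref{Corol: lr(g)+ltr(g)>=2m}, whose proof lives in \cite{DLM2}.
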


We also consider a generalization of the notion of good generating sets.  We say that a set of $\big(\rank(W)-\lR(g)\big)$-many reflections that generate the full group $W$ \emph{when combined with a reduced reflection factorization of $g$} is a \defn{relative generating set} for the element $g \in W$, and we denote by \defn{$\RGS(W, g)$} the collection of relative generating sets for $g$.

One of the main theorems of \cite{DLM2} was the following characterization of parabolic quasi-Coxeter elements in terms of relative generating sets, absolute order, and full reflection length.

\begin{theorem}[{\cite[Thm.~5.10]{DLM2}}]
\label{Prop: characterization of pqCox}
For a well generated complex reflection group $W$ of rank $n$ and an element $g\in W$, the following are equivalent.
\begin{enumerate}[(i)]
    \item The element $g$ is a parabolic quasi-Coxeter element of $W$.
    \item The collection $\RGS(W,g)$ of relative generating sets with respect to $g$ is nonempty.
    \item There exists a quasi-Coxeter element $w\in W$ such that $g\leq_{\RRR} w$.
    \item The full reflection length of $g$ satisfies $\ltr(g)=2n-\lR(g)$.
\end{enumerate}
\end{theorem}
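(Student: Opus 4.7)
The plan is to establish the circle (ii)~$\Rightarrow$~(iii)~$\Rightarrow$~(i)~$\Rightarrow$~(ii), together with the independent equivalence (ii)~$\Leftrightarrow$~(iv). Four of these five implications follow from the toolkit collected above; the genuinely difficult step is (iv)~$\Rightarrow$~(ii), extracting a relative generating set from a minimum-length full factorization.

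For (ii)~$\Rightarrow$~(iii), concatenate $\ttt = (t'_1, \ldots, t'_{n-k}) \in \RGS(W, g)$ (where $k := \lR(g)$) with any reduced factorization $g = t_1 \cdots t_k$; the resulting tuple is by definition a good generating set for $W$, so by Proposition~\ref{Prop: prod_t_i is q-Cox} its product $w := t_1 \cdots t_k \, t'_1 \cdots t'_{n-k}$ is a quasi-Coxeter element with $g \leq_{\RRR} w$.  For (iii)~$\Rightarrow$~(i), let $w$ be a quasi-Coxeter element with $g \leq_{\RRR} w$, and produce a reduced factorization $w = t_1 \cdots t_n$ whose initial segment is a reduced factorization of $g$ and whose full tuple generates $W$ (the latter property may be imposed via a Hurwitz-transitivity input for reduced factorizations of quasi-Coxeter elements). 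The subgroups $H_j := \langle t_1, \ldots, t_j\rangle$ have rank $j$ by reducedness; since $H_n = W$ is parabolic, iterated application of Proposition~\ref{Prop: Taylor_K=<H,t>} from $j = n$ down to $j = k$ shows every $H_j$ is parabolic, so $g$ is quasi-Coxeter for the parabolic subgroup $H_k$.  For (i)~$\Rightarrow$~(ii), choose any good generating set $\{s_1, \ldots, s_n\}$ of $W$ (possible since $W$ is well generated) and build the chain $H_0 := W_g \subseteq H_1 := \langle H_0, s_1\rangle \subseteq \cdots \subseteq H_n = W$; the rank increases by at most $1$ at each step, so picking the $n - k$ reflections at the strictly rank-increasing steps furnishes a set that, together with any reduced factorization of $g$ generating $W_g$, constitutes a relative generating set for $g$.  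Finally, (ii)~$\Rightarrow$~(iv) is the standard doubling trick: the length-$(2n-k)$ expression $g = t_1 \cdots t_k \cdot t'_1 t'_1 \cdots t'_{n-k} t'_{n-k}$ is a full factorization, giving $\ltr(g) \leq 2n - \lR(g)$, which combined with Corollary~\ref{Corol: lr(g)+ltr(g)>=2m} forces equality.

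The main obstacle is (iv)~$\Rightarrow$~(ii). My plan here is to show that any minimum-length full reflection factorization $g = s_1 \cdots s_{2n-k}$ can be transformed, by Hurwitz moves, into a factorization of the form $g = u_1 \cdots u_k \cdot v_1 v_1 \cdots v_{n-k} v_{n-k}$, i.e., a reduced factorization of $g$ followed by $n - k$ canceling pairs, while preserving the generating property of the tuple. Once such a normal form is achieved, $(v_1, \ldots, v_{n-k})$ is the desired relative generating set for $g$. I would attempt this by induction on $n - \lR(g) = \ltr(g) - \lR(g)$, successively peeling off canceling pairs of reflections from the tail of a strategically chosen Hurwitz representative, and using Proposition~\ref{Prop: Taylor_K=<H,t>} to maintain the parabolic nature of the subgroups generated by the surviving factors. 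The technical heart is ensuring that such a peeling step is always available in a way that respects both the product being $g$ and the generation of the full group $W$; control over the Hurwitz action on minimum full factorizations (going beyond what Proposition~\ref{Prop: LW G(m, p, n) transitivity} provides for reduced factorizations in the infinite family) is likely to be the crux, and may require case analysis or a separate structural input for the exceptional groups.
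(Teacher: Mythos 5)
This theorem is quoted by the paper from \cite[Thm.~5.10]{DLM2} and is not proved in the present document, so there is no in-paper proof to compare against; judging your argument on its own merits, it contains a genuine gap in the step (i)~$\Rightarrow$~(ii). Your procedure --- form the chain $H_0 := W_g \subseteq H_j := \langle H_{j-1}, s_j\rangle$ from a good generating set $\{s_1,\ldots,s_n\}$ and keep only the reflections at rank-increasing steps --- does produce a set of the correct cardinality $n-\lR(g)$, but there is no reason the discarded reflections are redundant for \emph{generation}: a step can enlarge the group without enlarging its rank. Concretely, take $W=B_2=G(2,1,2)$ and $g=\diag(-1,1)$, a parabolic quasi-Coxeter element with $W_g=\langle g\rangle$ of rank $1$. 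The pair $\bigl(s_1,s_2\bigr)=\bigl(\diag(1,-1),\,(12)\bigr)$ is a good generating set; the chain gives $H_1=\{\pm1\}^2$ of rank $2$ (rank-increasing) and $H_2=B_2$ of rank $2$ (not rank-increasing), so your rule selects $\{\diag(1,-1)\}$. But $\langle g,\diag(1,-1)\rangle=\{\pm1\}^2\neq B_2$, so this is not a relative generating set. The implication (i)~$\Rightarrow$~(ii), equivalently the extendability of a good generating set of a parabolic subgroup to one of $W$, is in fact the hard content of the cited theorem; in \cite{DLM2} it requires substantial work, including case analysis for the exceptional types, and cannot be obtained by this greedy chain argument.

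Two further points. First, your step (iv)~$\Rightarrow$~(ii) is explicitly only a plan: reducing an arbitrary minimum-length full factorization to the ``simply shuffled'' normal form $u_1\cdots u_k\cdot v_1v_1\cdots v_{n-k}v_{n-k}$ by Hurwitz moves is exactly the kind of input the paper attributes to \cite[Cor.~1.4]{LR} (see Section~\ref{sec:over-counting}), and it is a nontrivial theorem, not something recoverable from the toolkit you invoke; as written, the equivalence with (iv) is not established. Second, in (iii)~$\Rightarrow$~(i) the justification ``$H_j$ has rank $j$ by reducedness'' is not quite right as stated (reflection length need not equal codimension of fixed space in general complex reflection groups); the correct reason is that the $n$ roots of a good generating set of a rank-$n$ group span $V$ and are therefore linearly independent, so every $j$-element subset spans a $j$-dimensional space. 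That step, and (ii)~$\Rightarrow$~(iii) and (ii)~$\Rightarrow$~(iv), are sound modulo the Hurwitz-transitivity input you flag.
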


We now describe the good generating sets in $G(m, 1, n)$ and $G(m, m, n)$.

\subsubsection*{The combinatorial family} 
It is natural to represent collections of reflections in $G(m, p, n)$ as graphs on the vertex set $\{1, \ldots, n\}$: a transposition-like reflection with underlying permutation $(i j)$ is represented by an edge joining $i$ to $j$, while a diagonal reflection whose matrix entry of nonzero color occurs at position $(i, i)$ is represented by a loop at vertex $i$.  
We say that a collection of reflections in $G(m, p, n)$ is \defn{connected} if the associated graph is connected. 

A connected graph on $n$ vertices with $n$ edges contains a unique cycle.  In the case that the cycle is a loop, we call the graph is a \defn{rooted tree}.  Otherwise, the cycle contains at least two vertices, and in this case we say that the graph is a \defn{unicycle}.  Given a set $S$ of $n$ reflections whose associated graph is a unicycle, define a statistic $\delta(S)$, as follows: let $i_0, i_1, \ldots, i_{k - 1}, i_k = i_0$ be the vertices of the unique (graph) cycle, oriented in one of the two possible cyclic orders arbitrarily, so that the associated reflections are $[(i_j \; i_{j + 1}); a_j]$ for some $a_0, \ldots, a_{k - 1} \in \ZZ/m\ZZ$; then set
\[
\delta(S) = a_0 + \ldots + a_{k - 1}.
\]
Changing the cyclic orientation of the cycle replaces $\delta(S)$ with $-\delta(S)$.  This distinction will never be important, as we explain in the following remark.

\begin{remark}
\label{rem: Shi's delta}
It is not difficult to see (for example, following the argument in \cite[Prop.~3.30]{LW}, conjugating by a diagonal element of $G(m, 1, n)$ to send all but one of the elements of $S$ to true transpositions) that if $S$ is a set of reflections whose associated graph is a unicycle, then multiplying the factors of $S$ together in any order produces an element of $G(m, m, n)$ with exactly two (permutation) cycles, one of color $\delta(S)$ and the other of color $-\delta(S)$. This is compatible with the arbitrary choice in the definition of $\delta$.
\end{remark}

In \cite{DLM2} we described and enumerated the relative generating sets of parabolic quasi-Coxeter elements in $G(m, 1, n)$ and $G(m, m, n)$, building on earlier work of Shi \cite{Shi2005}.  In order to state these results, we introduce some additional terminology.

Given a parabolic quasi-Coxeter element $g$ of $G(m, 1, n)$ or $G(m, m, n)$, the \defn{partition $\Pi_g$ induced by $g$} is the partition of $\{1, \ldots, n\}$ whose blocks are the support of the generalized cycles of $g$.

Given a partition $\Pi$ of the set $K$, we say that a graph $\Gamma$ with vertex set $K$ is a \defn{tree relative to $\Pi$} if no edge of $\Gamma$ connects two vertices in the same block of $\Pi$ and, furthermore, contracting each block of $\Pi$ to a single point leaves a tree.  We say that $\Gamma$ is a \defn{rooted tree relative to $\Pi$} if $\Gamma$ is the union of a tree with respect to $\Pi$ and a single loop (at any vertex of $K$).  Finally, we say that $\Gamma$ is a \defn{unicycle relative to $\Pi$} if $\Gamma$ is the union of a tree with respect to $\Pi$ and a single non-loop edge.

\begin{proposition}[{\cite[Prop.~4.14]{DLM2}}]
\label{prop:RGS characterization}
Suppose that $W$ is either $G(m, 1, n)$ or $G(m, m, n)$ and that $g$ is a parabolic quasi-Coxeter element for $W$.  Let $\Pi_g$ be the partition of $\{1, \ldots, n\}$ induced by $g$.
\begin{enumerate}[(i)]
\item If either $W = G(m, 1, n)$ and $g$ is a quasi-Coxeter element for the subgroup $G(m, 1, \lambda_0) \times \Symm_{\lambda_1} \times \cdots \times \Symm_{\lambda_k}$, or $W = G(m, m, n)$ and $g$ is a quasi-Coxeter element for the subgroup $G(m, m, \lambda_0) \times \Symm_{\lambda_1} \times \cdots \times \Symm_{\lambda_k}$, then a set $S$  of reflections is a relative generating set for $g$ if and only if the graph associated to $S$ is a tree relative to $\Pi_g$.
\item If $W = G(m, 1, n)$ and $g$ is a quasi-Coxeter element for the subgroup $\Symm_{\lambda_1} \times \cdots \times \Symm_{\lambda_k}$, then a set $S$ of reflections is a relative generating set for $g$ if and only if the graph associated to $S$ is a rooted tree relative to $\Pi_g$ and the color of the unique diagonal reflection in $S$ generates $\ZZ/m\ZZ$.
\item  If $W = G(m, m, n)$ and $g$ is a quasi-Coxeter element for the subgroup $\Symm_{\lambda_1} \times \cdots \times \Symm_{\lambda_k}$, then a set of reflections is a relative generating set for $g$ if and only if the graph associated to $S$ is a unicycle relative to $\Pi_g$ and a certain color $c$ is a primitive generator for $\ZZ/m\ZZ$, where $c$ is defined as follows: if contracting the cycles of $g$ leaves a rooted tree, then $c$ is the color of the loop edge, whereas if contracting the cycles of $g$ leaves a unicycle, then $c$ is the value of the statistic $\delta$ on the cycle in the contracted graph.
\end{enumerate}
\end{proposition}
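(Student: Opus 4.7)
The plan is to fix any reduced reflection factorization $\bm u$ of $g$. By the definition in Section~\ref{sec:rgs}, a set $S$ of reflections lies in $\RGS(W, g)$ iff $|S| = \rank(W) - \lR(g) = k$ and $\langle S \cup \bm u \rangle = W$. In each of the three cases, the graph structure described has exactly $k$ edges (a tree on $k+1$ blocks, or a rooted tree or unicycle on $k$ blocks), so the cardinality is automatic and only the generation condition needs to be matched to the graph condition.

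The key technical input is the classification of reflection subgroups of $G(m,1,n)$ and $G(m,m,n)$ afforded by Steinberg's theorem (Theorem~\ref{Thm: steinberg}) together with Taylor's description (cited before Theorem~\ref{thm:parabolic everything in G(m, p, n)}): such a subgroup is determined by (a) its orbit partition on $\{1,\ldots,n\}$ under the underlying-permutation map, and (b) for each orbit $\Omega$, a \emph{color datum}---the subgroup of $\ZZ/m\ZZ$ generated by the colors of loops at vertices of $\Omega$ together with the $\delta$-colors of its independent cycles. For the graph of $S \cup \bm u$, the orbit partition is obtained from $\Pi_g$ by merging blocks along the non-loop edges of $S$, while the color datum of $\bm u$ on each block is read off from Theorem~\ref{thm:parabolic everything in G(m, p, n)}: in case (i) the $G(m,p,\lambda_0)$-block of $\bm u$ already realizes all of $\ZZ/m\ZZ$, whereas in cases (ii) and (iii) every block of $\bm u$ consists of transposition-like reflections whose cycles have trivial $\delta$-color.

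Given this dictionary, the three cases collapse to a pure graph-theoretic count. In case (i), only connectivity can fail, so $S \cup \bm u$ generates $W$ iff $S$ forms a tree relative to $\Pi_g$. In case (ii), the color datum for $W = G(m,1,n)$ must come entirely from $S$, so $S$ needs a loop of primitive color plus $k-1$ transposition-like edges bridging the blocks of $\Pi_g$---which is exactly a rooted tree with primitive loop color. In case (iii), there are no diagonal reflections in $W = G(m,m,n)$, so $S$ must be transposition-like throughout; connectivity uses $k-1$ bridges, and the remaining edge must close up one independent cycle in the combined graph whose $\delta$-color is primitive, which is exactly the unicycle condition with $c$ primitive.

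The main obstacle is the well-definedness and coherence of the statistic $c$ in case (iii): the combined graph $S \cup \bm u$ has many independent cycles (one inside each block plus the new one), and one must verify that they all contribute the same $\delta$-color (up to sign) to the color datum. This follows because each internal cycle of $\bm u$ is supported on a $\Symm_{\lambda_i}$-block and so is a product of genuine transpositions with trivial net color; thus the color datum is generated by a single value, equal to $c$ in one of the two subcases of the statement depending on whether the extra edge of $S$ lies inside a block of $\Pi_g$ (yielding a loop upon contraction) or bridges two blocks (yielding a cycle upon contraction). Verifying this identification is a short computation using conjugation by a diagonal element of $G(m,1,n)$, in the spirit of Remark~\ref{rem: Shi's delta}, which then completes the proof.
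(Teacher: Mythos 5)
This proposition is not proved in the present paper: it is quoted from the second paper in the series (\cite[Prop.~4.14]{DLM2}), so there is no in-text proof here to compare yours against. On its own terms, your outline follows what is evidently the intended route (the paper says explicitly that \cite{DLM2} builds on Shi's work \cite{Shi2005}): reduce everything to a graph-theoretic generation criterion for sets of reflections in $G(m,p,n)$, and then observe that the cardinality constraint $|S|=\rank(W)-\lR(g)$ leaves no slack, so connectivity forces the tree / rooted-tree / unicycle shape and the color condition is exactly what remains. Your case analysis is correct, including the point that in case (ii) the single loop must carry a color generating $\ZZ/m\ZZ$ because $\col$ kills every transposition-like reflection.

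Two soft spots. First, your ``key technical input'' is misattributed: Steinberg's theorem (Theorem~\ref{Thm: steinberg}) and Taylor's result classify \emph{parabolic} subgroups, whereas what you actually need is a description of the subgroup generated by an \emph{arbitrary} set of reflections in $G(m,1,n)$ or $G(m,m,n)$ --- namely, connectivity of the graph together with the subgroup of $\ZZ/m\ZZ$ generated by the loop colors (for $G(m,1,n)$, where $\col$ of any generated subgroup is exactly the group generated by the loop colors) respectively by the $\delta$-values of the independent cycles (for $G(m,m,n)$). That statement is true and is Shi's graph-theoretic classification of reflection subgroups of $G(m,p,n)$; it can also be proved directly by conjugating a spanning tree to true transpositions and computing the diagonal kernel, but it does not follow from the parabolic-subgroup results you cite. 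Second, in case (iii) your ``main obstacle'' rests on a miscount: the reduced factorization $\bm{u}$ of $g$ restricted to each block is a spanning \emph{tree} ($\lambda_i-1$ edges on $\lambda_i$ vertices, since each $g_i$ is a color-$0$ cycle), so the combined graph $S\cup\bm{u}$ has $n$ edges on $n$ vertices and exactly \emph{one} independent cycle, not ``one inside each block plus the new one.'' The well-definedness of $c$ is therefore immediate after the diagonal conjugation you indicate; there is nothing to reconcile across multiple cycles. Finally, you should record why the choice of $\bm{u}$ is immaterial: all reduced factorizations of $g$ lie in $W_g$ and form a single Hurwitz orbit there (Proposition~\ref{Prop: LW G(m, p, n) transitivity}), and Hurwitz moves on $\bm{u}$ preserve $\langle S\cup\bm{u}\rangle$, so membership in $\RGS(W,g)$ does not depend on the reduced factorization chosen.
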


\begin{theorem}[{\cite[Thm.~7.3]{DLM2}}]
\label{thm:count rgs}
Suppose that $m > 1$, $W$ is either $G(m, 1, n)$ or $G(m, m, n)$, and $g$ is a parabolic quasi-Coxeter element for $W$.
\begin{enumerate}[(i)]
\item If either $W = G(m, 1, n)$ and $g$ is a quasi-Coxeter element for the subgroup $G(m, 1, \lambda_0) \times \Symm_{\lambda_1} \times \cdots \times \Symm_{\lambda_k}$, or $W = G(m, m, n)$ and $g$ is a quasi-Coxeter element for the subgroup $G(m, m, \lambda_0) \times \Symm_{\lambda_1} \times \cdots \times \Symm_{\lambda_k}$, then 
\begin{equation} \label{eq: RGS first family}
\# \RGS(W, g) = m^k \cdot n^{k - 1} \cdot \prod_{i = 0}^k \lambda_i.
\end{equation}
\item If $W = G(m, 1, n)$ and $g$ is a quasi-Coxeter element for the subgroup $\Symm_{\lambda_1} \times \cdots \times \Symm_{\lambda_k}$, then 
\begin{equation}
\# \RGS(W, g) = \varphi(m) \cdot m^{k - 1} \cdot n^{k - 1} \cdot \prod_{i = 1}^k \lambda_i,
\end{equation}
where $\varphi$ denotes Euler's totient function. 
\item  If $W = G(m, m, n)$ and $g$ is a quasi-Coxeter element for the subgroup $\Symm_{\lambda_1} \times \cdots \times \Symm_{\lambda_k}$, then 
\begin{equation}
\#\RGS(W, g) =  \frac{\varphi(m) \cdot m^{k - 1}}{2}  \cdot
\Big(n^k - n^{k-1} -\sum_{j=2}^k  (j-2)! \cdot n^{k-j} e_j(\lambda)\Big) \cdot  \prod_{i = 1}^k \lambda_i,
\end{equation}
where $e_i(\lambda)$ denotes the $i$th elementary symmetric function in the variables $\lambda=(\lambda_1,\ldots,\lambda_k)$.
\end{enumerate}
\end{theorem}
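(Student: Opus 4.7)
The plan is to apply Proposition~\ref{prop:RGS characterization} to reduce the enumeration of $\RGS(W,g)$ to counting colored (multi)graph structures relative to the partition $\Pi_g$: trees in case~(i), rooted trees in case~(ii), or unicycles in case~(iii).  In each case the total count will factor into a graph-structure count times a color-scheme count.

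The central enumerative tool is a weighted Cayley identity: for a partition of $\{1,\ldots,n\}$ into $K$ blocks of sizes $\mu_1,\ldots,\mu_K$ summing to $n$, the number of graph-theoretic trees relative to this partition is $n^{K-2}\prod_i\mu_i$.  This follows by substituting $x_i=\mu_i$ into the classical Pr\"ufer identity $\sum_T\prod_i x_i^{\deg_T(i)-1}=(x_1+\cdots+x_K)^{K-2}$.  In case~(i), $\Pi_g$ has $K=k+1$ blocks, so the tree count is $n^{k-1}\prod_{i=0}^k\lambda_i$ and each of the $k$ tree edges contributes $m$ free color choices, giving the factor $m^k$.  In case~(ii), $\Pi_g$ has $K=k$ blocks, the tree count is $n^{k-2}\prod_i\lambda_i$, the loop attaches at any of $n$ vertices, and the colors contribute $\varphi(m)\cdot m^{k-1}$ (a primitive generator for the loop, and free choices for the $k-1$ tree-edge colors).

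Case~(iii) is the most intricate.  The plan is to split the count into sub-case~A (extra edge within a block, producing a contracted loop) and sub-case~B (extra edge between blocks, producing a contracted cycle of length $j\geq 2$).  Sub-case~A contributes $n^{k-2}\prod_i\lambda_i\cdot\sum_i\binom{\lambda_i}{2}$ underlying structures, each admitting $\varphi(m)\cdot m^{k-1}$ color schemes (the within-block reflection color must be a primitive generator; all other colors are free).  For sub-case~B, I would enumerate weighted contracted unicyclic multigraphs by cycle length $j$: choose the $j$ cycle blocks in $\binom{k}{j}$ ways, arrange them cyclically in $(j-1)!/2$ ways, attach the remaining $k-j$ blocks as a weighted forest, and realize each contracted edge between blocks $B_a,B_b$ as one of $\lambda_a\lambda_b$ underlying vertex pairs.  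The color count per structure is generically $\varphi(m)\cdot m^{k-1}$, because the constraint that the sum of the $j$ cycle-edge colors be a primitive generator of $\ZZ/m\ZZ$ cuts $m^j$ colorings down to $\varphi(m)\cdot m^{j-1}$, while the $k-j$ non-cycle edges remain free; the global factor $\tfrac12$ in the theorem will arise from the two cyclic orientations of the cycle (equivalently, from the unordered pairing of two parallel reflections at $j=2$ when the underlying vertex pair is shared).

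The main obstacle is the weighted unicycle enumeration in sub-case~B: one must handle (a) the distinction between cycle edges with distinct underlying vertex pairs and those sharing a common underlying pair (possible only at $j=2$); (b) the reconciliation of the orientation convention for the statistic $\delta$ with the unordered nature of reflection sets; and (c) the final algebraic simplification.  The coefficient $(j-2)!$ should emerge from the cyclic symmetry after fixing both an orientation and a reference vertex on the cycle, and $e_j(\lambda)$ should arise from summing weighted cycle configurations over $j$-subsets of cycle blocks, with each cycle block contributing weight $\lambda_i^2$ (once for each of its two incident cycle edges); combining with the overall factor $\prod_i\lambda_i$ then produces $e_j(\lambda)\cdot\prod_i\lambda_i$, and summing over $j$ collapses to the claimed closed form.
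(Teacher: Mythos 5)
This theorem is quoted from the second paper in the series (\cite[Thm.~7.3]{DLM2}) and is not proved in the present paper, so there is no in-paper argument to compare yours against; I can only assess the proposal on its own terms and against the ingredients the paper does import. Your route --- reduce via Proposition~\ref{prop:RGS characterization} to counting colored trees, rooted trees, and unicycles relative to $\Pi_g$, then apply weighted Cayley/Pr\"ufer counts --- is the natural one, and it is consistent with the paper's later appeal to the fact that each primitive generator of $\ZZ/m\ZZ$ occurs equally often as the value of the loop color or of $\delta$. Cases (i) and (ii) are complete as stated. For case (iii) I checked that your decomposition does close up: with $P := \prod_i \lambda_i$, sub-case~A contributes $\tfrac12\,n^{k-2}P\,(p_2(\lambda)-n)$ times the color factor, and, using the rooted-forest identity $\sum_F \prod_i \lambda_i^{\deg_F(i)} = \bigl(\prod_{i \notin C}\lambda_i\bigr)\bigl(\sum_{i\in C}\lambda_i\bigr)n^{\,k-j-1}$ for forests on the $k$ blocks rooted at the cycle set $C$, the cycle-length-$j$ part of sub-case~B contributes $\tfrac{(j-1)!}{2}\, n^{\,k-j-1}\,P\,\bigl(n\,e_j(\lambda)-(j+1)\,e_{j+1}(\lambda)\bigr)$; summing over $j$ telescopes against sub-case~A to give exactly $\tfrac{P}{2}\bigl(n^k-n^{k-1}-\sum_{j\ge2}(j-2)!\,n^{k-j}e_j(\lambda)\bigr)$. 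Two points in your narrative need adjusting, though neither is fatal. First, the color factor is uniformly $\varphi(m)\,m^{k-1}$ only after the digon at $j=2$ is weighted by $\tfrac12(\lambda_a\lambda_b)^2$: for two cycle reflections sharing the same underlying vertex pair the unordered color pair contributes $\tfrac12 m\varphi(m)$ rather than $m\varphi(m)$, and it is precisely the sum $\binom{\lambda_a\lambda_b}{2}\cdot m\varphi(m)+\lambda_a\lambda_b\cdot\tfrac12 m\varphi(m)=\tfrac12(\lambda_a\lambda_b)^2 m\varphi(m)$ that restores uniformity. Second, the coefficient $(j-2)!\,e_j(\lambda)$ does \emph{not} come term-by-term from the $j$-cycles (those produce $(j-1)!$ and the combination $n e_j-(j+1)e_{j+1}$); it, and the global $\tfrac12$, emerge only from the cancellation between consecutive values of $j$ and the within-block sub-case. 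With those refinements your plan is correct and yields the stated formulas.
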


\begin{remark} \label{rem: RGS types ABD}
In particular, we have that $\#\RGS(\Symm_n)=n^{n-2}$ and  $\#\RGS(B_n)=(2n)^{n-1}$ \cite[\href{https://oeis.org/A052746}{A052746}]{oeis}. The sequence $(\#\RGS(D_n))$ is  \cite[\href{https://oeis.org/A320064}{A320064}]{oeis} (see also \cite[Prop.~5.1]{ardila2020}).
\end{remark}

\section{Proof of the main results}
\label{Sec: proof main}

In this section, we prove our main results.  In the case of Weyl groups, the statement is as follows.

\weyltheorem

Theorem~\ref{Thm: Weyl case} has an even better expression for parabolic \emph{Coxeter} elements, where the product structure and analogy to Theorem~\ref{thm:S_n genus 0} is particularly apparent.  It follows immediately via \eqref{eq: deligne-arnold-bessis formula}.

\weylthmdecompirred

We generalize Theorem~\ref{Thm: Weyl case} to well generated complex reflection groups below, as Theorem~\ref{volume theorem}.  In order to state it, we need to introduce some additional terminology.

The reader is invited to recall at this point the construction of roots and coroots associated to complex reflection groups given in Section~\ref{sec:Root systems}. In its most general form, our main Theorem~\ref{volume theorem} involves a Grammian statistic on roots associated to (relative) generating sets, which is defined as follows.

\begin{definition}
\label{Defn: Gram Det}
The \defn{Grammian determinant $\GD(\bm\rho_{\ttt})$}  of a set of roots $\bm{\rho_{ t}}:=\{\rho_{ t_i}\}$ associated to a set of reflections $\ttt:=\{ t_i\}\subset \RRR$ is defined as the determinant of the Gram matrix $\big(\langle\rho_{ t_i},\widecheck{\rho}_{ t_j}\rangle\big)_{i,j}$, i.e., 
\[
\GD(\bm{\rho_{\ttt}}):=\det\big( \langle \rho_{ t_i},\widecheck{\rho}_{ t_j}\rangle\big)_{i,j}
\]
where $\rho_{t_i}$ and $\widecheck{\rho}_{t_j}$ are the root and coroot associated with the reflections $t_i$ and $t_j$, respectively. 
\end{definition}

With this terminology in hand, we restate our main theorem for well generated complex reflection groups.

\volumetheorem

\begin{example}
\label{Ex: H_3 calc}
We illustrate here Theorem~\ref{thm:main} for the identity element $\id$ in the group $W=H_3$. There are $380$ good generating sets $\ttt$ for $H_3$ and they are naturally divided into three classes with respect to their Grammian statistics $\GD(\bm \rho_{\ttt})$. One of these classes has $180$ elements, each with Grammian statistic equal to $2$, while the other two classes have $100$ elements each\footnote{These two classes are related via a reflection automorphism, as in \cite{RRS}, which is why they have the same size.  That the total contribution to the sum from the two classes is rational (even though each class contributes an irrational number) follows from the construction of reflection automorphisms from Galois automorphisms of the field of definition of the group.} and statistics equal to $\GD(\bm \rho_{\ttt})=3+\sqrt{5}$ and $\GD(\bm \rho_{\ttt})=3-\sqrt{5}$. Putting it all together, Theorem~\ref{thm:main} states that 
\begin{align*}
\Ftr_{H_3}(\id)&=6!\cdot\Bigg(100\cdot\left(\dfrac{1}{3+\sqrt{5}}+\dfrac{1}{3-\sqrt{5}}\right)+180\cdot\dfrac{1}{2}\Bigg)\\
&=6!\cdot \left(100\cdot\dfrac{3}{2}+180\cdot\dfrac{1}{2}\right)=6!\cdot 240=172800,
\end{align*}
which agrees with the representation-theoretic calculation of $\Ftr_{H_3}(\id)$ in \cite[Rem.~5.2]{DLM1}.
\end{example}

\begin{remark}
\label{Rem: fred(g_i)->fred(g)}
 
By \cite[Cor.~6.14]{DLM2}, we have that
\[
\Fred(g)=\binom{\lR(g)}{\lR(g_1),\ldots,\lR(g_r)}\cdot\prod_{i=1}^r\Fred(g_i),
\]
and so \eqref{Eq: weyl thm} and \eqref{eq:volume theorem} may be equivalently written as
\begin{align}
\label{altform}
\Ftr_W(g) & =\frac{\ltr(g)!}{\lR(g)!}\cdot\Fred(g)\cdot 
\#\RGS(W,g)\cdot\dfrac{I(W_g)}{I(W)} \notag\\ \intertext{for a Weyl group $W$, and}
\Ftr_W(g) &= \frac{\ltr(g)!}{\lR(g)!} \cdot \Fred(g) \cdot \sum_{ \ttt \in \RGS(W, g)} \frac{ \GD(\bm{\rho}_g)}{\GD(\bm{\rho_{\ttt}}\cup\bm{\rho}_g)}
\end{align}
for any complex reflection group $W$.
\end{remark}

\subsection*{Outline of the proof}

The proof strategy of Theorems~\ref{Thm: Weyl case} and \ref{thm:main} is as follows. First, we show that Theorems~\ref{Thm: Weyl case} and~\ref{volume theorem} are equivalent when $W$ is a Weyl group. Next, we do the following reductions:
\begin{itemize}
    \item In Section~\ref{sec:reduce to irreducible} we show that it suffices to look at the case of irreducible groups $W$. 
    \item A priori, for a given group $W$, the value of the right side of \eqref{eq:volume theorem} depends on a variety of choices, including the choice of root system for $W$ and the choice of reduced reflection factorization of $g$.  In Section~\ref{sec:invariance} we show that these choices do not affect the value of the right side of \eqref{eq:volume theorem}, and also establish two other related invariance properties.
\end{itemize}
 After these reductions, we then proceed to prove Theorem~\ref{thm:main} in a case-by-case fashion.  The case of the symmetric group $\Symm_n = G(1,1,n)$ was verified in Section~\ref{intro: recovering hurwitz}.  In Sections~\ref{sec: proof Gm1n} and \ref{sec: proof Gmmn}, we respectively prove the theorem in the infinite families $G(m, 1, n)$ and $G(m, m, n)$ with $m>1$.  In Section~\ref{Sec: cut-and-join counting}, we give a recurrence relation (Theorem~\ref{thm:cut and join}) inspired by the cut-and-join equations to enumerate full reflection factorizations in real reflection groups.  Finally, in Section~\ref{sec: proof exceptional}, we complete the proof in the exceptional groups using a combination of our cut-and-join recurrence and a large computer calculation.

\subsection{The main theorems are equivalent for Weyl groups}
\label{sec: equivalence of main thms for Weyl groups}

To establish the equivalence of our two main theorems for Weyl groups, we make use of a characterization of good generating sets due to Baumeister and Wegener.

\begin{proposition}[{\cite[Cor.~1.2]{BW}}]
\label{Prop: BW charn of gen sets}
For a Weyl group $W$ of rank $n$, a set  $\{t_i\}_{i=1}^n$ of $n$ reflections generates $W$ if and only if the roots $\{\rho_{t_i}\}$ and coroots $\{\widecheck{\rho}_{t_i}\}$ respectively form $\ZZ$-bases of the root and coroot lattices $\mathcal{Q}$ and $\widecheck{\mathcal{Q}}$ of $W$.
\end{proposition}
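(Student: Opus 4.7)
The plan is to establish the equivalence through the auxiliary statement that the product $g := t_1 \cdots t_n$ is a quasi-Coxeter element of $W$, exploiting the characterization of such elements from Theorem~\ref{Prop: pqCox characterization for Weyl W}. I will show that the generation condition $\langle t_1, \ldots, t_n\rangle = W$ and the basis condition are each equivalent to $g$ being quasi-Coxeter.

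The first of these equivalences is almost immediate. If $\{t_1, \ldots, t_n\}$ generate $W$, then by Proposition~\ref{Prop: prod_t_i is q-Cox} the product $g$ is quasi-Coxeter. Conversely, if $g$ is quasi-Coxeter, then Theorem~\ref{Prop: pqCox characterization for Weyl W}(iv) guarantees that $g$ lies in no proper reflection subgroup, so $\langle t_1, \ldots, t_n\rangle$, which contains $g$, must be all of $W$.

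For the second equivalence I will use the determinant criterion Theorem~\ref{Prop: pqCox characterization for Weyl W}(iii), namely that $g$ is quasi-Coxeter if and only if $|\det(g - \III_V)| = I(W)$. The link between this determinant and the basis condition comes from the key identity
\[
|\det(\III_V - g)| = |\det G|, \qquad \text{where } G = \bigl(\langle \rho_{t_i}, \widecheck{\rho}_{t_j}\rangle\bigr)_{i,j}.
\]
I plan to prove this via the telescoping expansion $\III_V - g = \sum_{i=1}^{n} \bigl((t_1 \cdots t_{i-1}) \rho_{t_i}\bigr) \widecheck{\rho}_{t_i}^T$, which factors as $R' \widecheck{R}^T$ where $R'$ and $\widecheck{R}$ are the matrices whose columns are $(t_1 \cdots t_{i-1}) \rho_{t_i}$ and $\widecheck{\rho}_{t_i}$ respectively. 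Because each reflection $t_j$ modifies any vector only by an integer multiple of $\rho_{t_j}$, the change-of-basis matrix from $\{(t_1 \cdots t_{i-1})\rho_{t_i}\}$ to $\{\rho_{t_i}\}$ is upper unitriangular, giving $|\det R'| = |\det R|$ and hence $|\det(\III_V - g)| = |\det R \cdot \det \widecheck{R}| = |\det G|$.

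The final step factors the Gram matrix through the simple (co)roots. Writing $\rho_{t_i} = \sum_k M_{ik} \alpha_k$ and $\widecheck{\rho}_{t_i} = \sum_k \widecheck{M}_{ik} \widecheck{\alpha}_k$ for integer matrices $M, \widecheck{M}$, we obtain $G = M A \widecheck{M}^T$, where $A$ is the Cartan matrix with $|\det A| = I(W)$. Taking determinants yields
\[
|\det G| = [\mathcal{Q} : \mathbb{Z}\langle \rho_{t_i}\rangle] \cdot I(W) \cdot [\widecheck{\mathcal{Q}} : \mathbb{Z}\langle \widecheck{\rho}_{t_i}\rangle].
\]
Combining with the determinant criterion, $g$ is quasi-Coxeter if and only if the product of the two lattice indices equals $1$; since each index is a positive integer, this happens precisely when both are $1$, i.e., when both sets form $\mathbb{Z}$-bases. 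The main technical step will be the telescoping identity and verifying its upper-triangular structure; everything else reduces to a straightforward unwinding of the connection-index formula.
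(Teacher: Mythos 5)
The paper does not prove this proposition; it imports it verbatim from Baumeister--Wegener as \cite[Cor.~1.2]{BW}, so there is no internal argument to compare against. Your derivation is internally coherent and the individual steps check out: the telescoping identity $\III_V - g = \sum_{i}\bigl((t_1\cdots t_{i-1})\rho_{t_i}\bigr)\widecheck{\rho}_{t_i}^{T}$ is correct (it follows from $\III_V - AB = (\III_V - A) + A(\III_V - B)$ together with $\III_V - t_i = \rho_{t_i}\widecheck{\rho}_{t_i}^{T}$); the unitriangularity of the change of basis from $\{(t_1\cdots t_{i-1})\rho_{t_i}\}$ to $\{\rho_{t_i}\}$ does give $|\det(\III_V - g)| = |\det G|$ (integrality of the Cartan numbers is not even needed for this determinant step); and the factorization $G = MA\widecheck{M}^{T}$ through the Cartan matrix correctly reduces the determinant criterion to the product of the two lattice indices being $1$ --- note that $|\det G| = I(W)\neq 0$ already forces both indices to be finite, so concluding that each equals $1$ is legitimate. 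Your final step is essentially the same change-of-basis computation that the paper itself performs in Section~\ref{sec: equivalence of main thms for Weyl groups}, only run in the opposite direction (there it is used \emph{assuming} the present proposition).

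The one substantive caveat is logical rather than computational: you route everything through Theorem~\ref{Prop: pqCox characterization for Weyl W} and Proposition~\ref{Prop: prod_t_i is q-Cox}. Condition (ii) of that theorem is precisely an existence version of the statement you are proving, and the proofs of these results in \cite{DLM2} rest on \cite[Cor.~1.2]{BW} itself. So what you have is a valid consistency check within the paper's quoted toolkit, not an independent proof of the Baumeister--Wegener result; to reprove \cite[Cor.~1.2]{BW} from scratch you would need to establish the equivalences (i)$\Leftrightarrow$(iii)$\Leftrightarrow$(iv) of Theorem~\ref{Prop: pqCox characterization for Weyl W} without invoking it, which is where the real content lies. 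You should state this dependency explicitly if you present the argument as a proof.
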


\begin{proof}[Proof of equivalence of Theorems~\ref{Thm: Weyl case} and~\ref{thm:main} for Weyl groups]
It is sufficient to show that for any Weyl group $W$, any parabolic quasi-Coxeter element $g\in W$, any choice $\bm\rho_g$ of roots associated to a reduced factorization of $g$, and any relative generating set $\ttt\in \RGS(W,g)$, we have
\[
\frac{\GD(\bm{\rho}_g)}{\GD(\bm\rho_{\ttt}\cup\bm\rho_g)} = \frac{I(W_g)}{I(W)}.
\]
By Proposition~\ref{Prop: BW charn of gen sets}, the sets $\bm\rho := \bm\rho_{\ttt}\cup\bm\rho_g$ and $\widecheck{\bm\rho} := \widecheck{\bm\rho}_{\ttt}\cup\widecheck{\bm\rho}_g$ form $\ZZ$-bases of the root and coroot lattices $\mathcal{Q}$ and $\widecheck{\mathcal{Q}}$ of $W$, and similarly $\bm\rho_g$ and $\widecheck{\bm\rho}_g$ are $\ZZ$-bases for the root and coroot lattices for $W_g$.  
Let $\bm\alpha:=\{\alpha_i\}$ be a set of simple roots for $W$, with associated coroots $\widecheck{\bm\alpha}:=\{\widecheck{\alpha}_i\}$.  Since $\bm\rho$ and $\bm\alpha$ are $\ZZ$-bases of the same lattice, the change-of-basis matrix $U$ between them has determinant $\pm 1$, and likewise for the change-of-basis matrix $\widecheck{U}$ between $\widecheck{\bm\rho}$ and $\widecheck{\bm\alpha}$.  Moreover, since each $\widecheck{\rho}_i$ is a positive multiple of the corresponding $\rho_i$, and likewise for the $\alpha_i$, we have $\det(U) = \det(\widecheck{U})$.  Thus
\[
\GD(\bm\rho) = 
\det\big(\langle\rho_i,\widecheck{\rho}_j\rangle\big)= 
\det(U) \cdot \det\big(\langle \alpha_i, \widecheck{\rho}_j\rangle\big) = 
\det(U) \cdot \det\big(\langle \alpha_i, \widecheck{\alpha}_j\rangle\big) \cdot \det\big(\widecheck{U}^\top\big) = \GD(\bm\alpha).
\]
It is a standard fact (as in \cite[\S9-4]{Kane}, for instance) that $\det\big(\langle\alpha_i,\widecheck{\alpha}_j\rangle\big)$ is equal to the connection index $I(W)$, so that $\GD(\bm\rho_{\ttt}\cup\bm\rho_g)=I(W)$.  For the same reasons, $\GD(\bm\rho_g)=I(W_g)$.  The desired equivalence follows immediately.
\end{proof}

\subsection{Reduction to the irreducible case}
\label{sec:reduce to irreducible}

Suppose that $g$ is a parabolic quasi-Coxeter element in a reducible complex reflection group $W = W_1 \times W_2$, with $g = g_1g_2$ the decomposition of $g$ in the direct product.  (This is not necessarily the generalized cycle decomposition of $g$, which might further refine $g_1$ and $g_2$.)  Since the set of $W$-reflections decomposes as the union of the sets $\RRR_1$ of $W_1$-reflections and $\RRR_2$ of $W_2$-reflections, we have that a $\RRR$-factorization $g = t_1 \cdots t_k$ is reduced if and only if it is formed by shuffling together a reduced $\RRR_1$-factorization of $g_1$ and a reduced $\RRR_2$-factorization of $g_2$.  Thus
\[
\Fred(g) = \frac{\lR(g)!}{\lR[W_1](g_1)! \cdot \lR[W_2](g_2)!} \Fred[W_1](g_1) \Fred[W_2](g_2)
.
\]
The same is true if one replaces the word ``reduced'' with the phrase ``minimum-length full'', and so one may replace each copy of ``red'' with ``full'' in the preceding equation.

Similarly, it is easy to see that every relative generating set $\ttt$ for $g$ is the union $\ttt = \ttt_1 \cup \ttt_2$ of an element $\ttt_1$ of $\RGS(W_1, g_1)$ with an element $\ttt_2$ of $\RGS(W_2, g_2)$.  Since reflections in $W_1$ commute with those in $W_2$, the Gram matrices associated to the numerator and denominator of any summand in \eqref{eq:volume theorem} are block-diagonal, and so
\[
\frac{ \GD(\bm{\rho}_g)}{\GD(\bm{\rho_{\ttt}}\cup\bm{\rho}_g)}
=
\frac{ \GD(\bm{\rho}_{g_1})}{\GD(\bm{\rho}_{\ttt_1}\cup\bm{\rho}_{g_1})}
\cdot 
\frac{ \GD(\bm{\rho}_{g_2})}{\GD(\bm{\rho}_{\ttt_2}\cup\bm{\rho}_{g_2})}.
\]
Summing over all relative generating sets for $g$, we have further that 
\[
\sum_{ \ttt \in \RGS(W, g)} 
\frac{ \GD(\bm{\rho}_g)}{\GD(\bm{\rho_{\ttt}}\cup\bm{\rho}_g)}
=
\Biggl(\sum_{ \ttt_1 \in \RGS(W_1, g_1)} 
\frac{ \GD(\bm{\rho}_{g_1})}{\GD(\bm{\rho}_{\ttt_1}\cup\bm{\rho}_{g_1})}\Biggr)
\cdot 
\Biggl(\sum_{ \ttt_2 \in \RGS(W_2, g_2)} 
\frac{ \GD(\bm{\rho}_{g_2})}{\GD(\bm{\rho}_{\ttt_2}\cup\bm{\rho}_{g_2})}\Biggr).
\]

Combining the calculations above, assuming that the main theorem holds for $W_1$ and $W_2$ (and employing it specifically in the form \eqref{altform}), we have
\begin{align*}
\Ftr_W(g) &= \frac{\ltr(g)!}{\ltr[W_1](g_1)! \cdot \ltr[W_2](g_2)!} \Ftr_{W_1}(g_1) \Ftr_{W_2}(g_2) \\
& = \frac{\ltr(g)!}{\ltr[W_1](g_1)! \cdot \ltr[W_2](g_2)!} \cdot \Biggl(\frac{\ltr(g_1)!}{\lR(g_1)!} \cdot \Fred(g_1) \cdot \sum_{ \ttt_1 \in \RGS(W, g_1)} \frac{ \GD(\bm{\rho}_{g_1})}{\GD(\bm{\rho}_{\ttt_1}\cup\bm{\rho}_{g_1})} \Biggr) \times  \\
& \qquad \times \Biggl(
\frac{\ltr(g_2)!}{\lR(g_2)!} \cdot \Fred(g_2) \cdot \sum_{ \ttt_2 \in \RGS(W, g_2)} \frac{ \GD(\bm{\rho}_{g_2})}{\GD(\bm{\rho}_{\ttt_2}\cup\bm{\rho}_{g_2})}\Biggr)\\
& = \frac{\ltr(g)!}{\lR(g)!} \cdot \Fred(g) \cdot \sum_{ \ttt \in \RGS(W, g)} \frac{ \GD(\bm{\rho}_g)}{\GD(\bm{\rho_{\ttt}}\cup\bm{\rho}_g)},
\end{align*}
as needed.  Thus, if the theorem is valid for \emph{irreducible} groups, it follows by induction that it holds for all complex reflection groups.  For this reason, in the remainder of the proof (beginning in Section~\ref{sec: proof Gm1n}), we consider only the irreducible groups, on a case-by-case basis.

\subsection{Invariance}
\label{sec:invariance}

In this section, we prove four invariances: first, invariance of $\GD(\bm{\rho}_{\ttt})$ with respect to the choice of root system for $W$; second, invariance of the right side of \eqref{eq:volume theorem} under conjugation; third, invariance of the quotient $\frac{ \GD(\bm{\rho}_g)}{\GD(\bm{\rho_{\ttt}}\cup\bm{\rho}_g)}$ with respect to the choice of reduced factorization for $g$; and finally, invariance of $\GD(\bm{\rho}_{\ttt})$ under conjugation of one reflection in $\ttt$ by another.

Fix a complex reflection group $W$ and an associated root system $\bm{\rho}_{\RRR}$.  First, observe that we may replace a root $\rho_{ t}$ with its multiple $c \rho_{ t}$ so long as we also replace the coroot $\widecheck{\rho}_{ t}$ with $  \widecheck{\rho}_{ t} / \overline{c}$.  It follows immediately that choosing a different root system $\widetilde{\bm{\rho}_{\RRR}}$ corresponds to multiplying any Gram matrix $\big(\langle\rho_{ t_i},\widecheck{\rho}_{ t_j}\rangle\big)_{i,j}$ on the left and right by inverse diagonal matrices, so that its determinant is unaffected.  

Second, to facilitate computations in subsequent sections, it will be convenient to consider elements $g$ having simple forms.  If $u$ is any unitary transformation that normalizes $W$ and $\bm{\rho}_{\RRR}$ is a root system for $W$, then conjugating any reflection factorization $\ttt$ of $g$ on the left by $u$ (factor by factor) produces a reflection factorization $u\ttt u^{-1}$ of $ugu^{-1}$, and likewise for a relative generating set $S$.  The transformed root system $u\bm{\rho}_{\RRR}$ is not necessarily equal to $\bm{\rho}_{\RRR}$, but since $u$ is unitary it is \emph{a} root system for $W$.  Since $u$ is unitary, the Gram matrix of $\ttt$ with respect to $\bm{\rho}_{\RRR}$ is equal to the Gram matrix of $u \ttt u^{-1}$ with respect to $u\bm{\rho}_{\RRR}$, which (by the previous paragraph) is equal to the Gram matrix of $u \ttt u^{-1}$ with respect to $\bm{\rho}_{\RRR}$.  Therefore, we may replace $g$ at will with a conjugate $ugu^{-1}$ without changing the value of the right side of \eqref{eq:volume theorem}.

Third, we explain why the ratio of Grammian determinants in \eqref{eq:volume theorem} is independent of the choice of reduced factorization for $g$.

\begin{lemma}
\label{Lem: GD ratio independence}
Keeping the notation of Theorem~\ref{volume theorem} and assuming that $\ttt$ is a fixed element of $\RGS(W,g)$, if $\bm{\rho}_g$ and $\bm{\rho'}_g$ are sets of roots associated with two different reduced reflection factorizations of $g$, then 
\[
\frac{ \GD(\bm{\rho}_g)}{\GD(\bm{\rho_{\ttt}}\cup\bm{\rho}_g)}=\frac{ \GD(\bm{\rho'}_g)}{\GD(\bm{\rho_{\ttt}}\cup\bm{\rho'}_g)}.
\]
\end{lemma}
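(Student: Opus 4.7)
My plan is to express the ratio $\GD(\bm{\rho}_g)/\GD(\bm{\rho}_{\ttt}\cup\bm{\rho}_g)$ as a geometric invariant attached to the subspace $(V^g)^\perp$, which is intrinsic to $g$ and independent of the particular reduced reflection factorization chosen to represent it. The key tool will be the classical Schur complement identity, applied after factoring the Gram matrix of Definition~\ref{Defn: Gram Det} into a Hermitian part and a diagonal part.

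Concretely, I will exploit the fact that for any reflection $t$ the coroot is a scalar multiple $\widecheck{\rho}_t = c_t\cdot\rho_t$, where $c_t$ is determined by the normalization $\langle\rho_t,\widecheck{\rho}_t\rangle=1-\xi_t$ from Section~\ref{sec:Root systems}; crucially, $c_t$ depends only on the reflection $t$ (and the fixed root choice for it), not on any factorization. Consequently, the matrix $\big(\langle\rho_{t_i},\widecheck{\rho}_{t_j}\rangle\big)$ factors as $N\cdot D$, where $N_{ij}=\langle\rho_{t_i},\rho_{t_j}\rangle$ is the ordinary Hermitian Gram matrix of the roots and $D$ is diagonal with entries depending only on the individual reflections. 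This yields $\GD(\bm{\rho}_\sigma)=\det(N_\sigma)\cdot\prod_{t\in\sigma}\overline{c_t}$, so in the ratio
\[
\frac{\GD(\bm{\rho}_g)}{\GD(\bm{\rho}_{\ttt}\cup\bm{\rho}_g)} = \Big(\prod_{t\in\ttt}\overline{c_t}\Big)^{-1}\cdot\frac{\det N_g}{\det N_{\ttt\cup g}}
\]
the contributions from the reflections in the factorization of $g$ cancel, and the first factor manifestly depends only on $\ttt$. For the remaining factor I will invoke the Schur complement formula $\det N_{\ttt\cup g}=\det N_g\cdot\det(N_\ttt - X^* N_g^{-1} X)$ and recall that the Schur complement is precisely the Hermitian Gram matrix of the orthogonal projections of the roots in $\bm{\rho}_{\ttt}$ onto $(\Span\bm{\rho}_g)^\perp$.

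The proof concludes by verifying that $\Span\bm{\rho}_g$ is the same for every reduced factorization of $g$. Since $g$ is parabolic quasi-Coxeter (any element with a relative generating set must be so, by Theorem~\ref{Prop: characterization of pqCox}), we have $\lR(g)=\codim V^g$, and for any reduced factorization $g=s_1\cdots s_k$ the chain $\codim V^g \le \codim\bigcap_i V^{s_i}\le k = \lR(g)=\codim V^g$ is forced to be a chain of equalities. Hence $\bigcap_i V^{s_i}=V^g$ and $\Span\bm{\rho}_g=(V^g)^\perp$ is intrinsic to $g$, so the projected roots, the Schur complement, and thus the whole ratio are independent of the factorization. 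The main technical care needed is the complex-conjugate bookkeeping in the Hermitian Schur complement step, and the verification that $c_t$ truly depends only on the reflection; both are routine but essential, as they are what allow the factorization-dependent factors in $\GD$ to cancel cleanly.
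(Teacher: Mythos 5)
Your proposal is correct and follows essentially the same route as the paper: both first strip off the diagonal rescaling that turns the Gram matrix of Definition~\ref{Defn: Gram Det} into the ordinary Hermitian Gram matrix (the factors attached to the reflections of $g$ cancel in the ratio, and those attached to $\ttt$ are fixed), and both then observe that the remaining ratio of ordinary Gram determinants depends only on $\Span\bm{\rho}_g=(V^g)^\perp$ --- the paper via the base-times-height formula $\widetilde{\GD}(x_1,\dots,x_n,y)/\widetilde{\GD}(x_1,\dots,x_n)=\operatorname{dist}(y,\Span\{x_i\})^2$, which is just the $1\times1$ iterated form of your Schur complement identity. The only genuine (and minor) variation is that you justify $\Span\bm{\rho}_g=(V^g)^\perp$ by the codimension-counting chain (using $\lR(g)=\codim V^g$, which for parabolic quasi-Coxeter elements follows from $\lR[W](g)=\lR[W_g](g)=\rank(W_g)$), whereas the paper cites that the reduced factorizations form good generating sets for $W_g$; your version is slightly more self-contained.
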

\begin{proof}
The proof of this lemma relies on a standard interpretation of the ratio of Gram determinants. In the standard setting, however, we pair vectors with themselves (and not a rescaling as we do with the coroots in Definition~\ref{Defn: Gram Det}), so that we first have to make this transition clear.

The \emph{usual} Gram determinant $\widetilde{\GD}(\bm{v})$ of a set of vectors $\bm{v}:=\{v_1,\ldots,v_n\}$ is the determinant of the Gram matrix of $\bm{v}$; that is, $\widetilde{\GD}(\bm{v}):=\det\big(\langle v_i,v_j\rangle\big)_{i,j}$ for the Hermitian inner product $\langle \cdot , \cdot \rangle$. The two Gram matrices (the usual one and that of Definition~\ref{Defn: Gram Det}) are identical after a rescaling of their columns. For their determinants, this translates to 
\[
\GD(\bm{\rho})=\widetilde{\GD}(\bm{\rho})\cdot\prod_{i=1}^n\dfrac{1-\xi_i}{\langle \rho_i,\rho_i\rangle},
\]
where $\bm{\rho}:=\{\rho_1,\ldots,\rho_n\}$ is a set of roots and $\xi_i$ is the eigenvalue associated to the (reflection of the) root $\rho_i$. After rewriting the ratios of the statement in terms of the usual Gram determinant $\widetilde{\GD}$, the extra factors cancel out and we are left to show that 
\begin{equation}\label{eq:usual GD}
\frac{ \widetilde{\GD}(\bm{\rho}_g)}{\widetilde{\GD}(\bm{\rho_{\ttt}}\cup\bm{\rho}_g)}=\frac{ \widetilde{\GD}(\bm{\rho'}_g)}{\widetilde{\GD}(\bm{\rho_{\ttt}}\cup\bm{\rho'}_g)}.
\end{equation}
Now the corresponding reflections for $\bm{\rho}_g$ and $\bm{\rho'}_g$ form good generating sets for $W_g$, which means that the two sets of roots both span the orthogonal complement $X^{\perp}$ of the flat $X:=V^g$. Then \eqref{eq:usual GD} is an easy corollary of the standard formula (see for instance \cite[Thm.~8.7.4]{davis_GD})
\[
\dfrac{\widetilde{\GD}(x_1,x_2,\ldots,x_n,y)}{\widetilde{\GD}(x_1,x_2,\ldots,x_n)}=\min_{a_i\in\CC} \|y-(a_1x_1+\cdots + a_nx_n)\|,
\]
where $\|v\|:=\langle v,v\rangle$ and the $x_i$ are linearly independent vectors.
\end{proof}

Finally, we show that conjugating one element of $\ttt$ by another does not affect $\GD(\bm{\rho}_{\ttt})$.

\begin{proposition}
\label{prop: GD invariant to Hurwitz moves}
Let $W$ be a complex reflection group, let $\ttt = \{ t_1, \ldots,  t_k\}$ be any set of reflections in $W$, and let $\ttt' = (\ttt \smallsetminus \{ t_i\}) \cup \{ t_j  t_i  t_j^{-1}\}$ be the result of replacing $ t_i$ in $\ttt$ with its conjugate by $ t_j$ for any $i, j \in \{1, \ldots, k\}$.  Then $\GD(\bm{\rho}_{\ttt}) = \GD(\bm{\rho}_{\ttt'})$.
\end{proposition}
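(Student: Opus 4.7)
The plan is to exhibit the transformation $M \mapsto M'$ between the Gram matrices of $\ttt$ and $\ttt'$ as a pair of elementary row and column operations, each with unit determinant. First, by the scaling-invariance recorded in Section~\ref{sec:invariance}, I may choose the root and coroot associated to $t_j t_i t_j^{-1}$ to be precisely $t_j(\rho_{t_i})$ and $t_j(\widecheck{\rho}_{t_i})$; unitarity of $t_j$ yields $\langle t_j(\rho_{t_i}), t_j(\widecheck{\rho}_{t_i})\rangle = \langle \rho_{t_i}, \widecheck{\rho}_{t_i}\rangle = 1-\xi_i$, so this pair satisfies the normalization of Section~\ref{sec:Root systems}. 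Writing $M_{ab} := \langle \rho_{t_a}, \widecheck{\rho}_{t_b}\rangle$ and $\xi_j$ for the non-unit eigenvalue of $t_j$, I observe that the entries of $M'$ agree with those of $M$ outside row $i$ and column $i$.

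Next I will compute the modified row and column entries directly. Expanding $t_j(\rho_{t_i}) = \rho_{t_i} - M_{ij}\, \rho_{t_j}$ and pairing with $\widecheck{\rho}_{t_b}$ gives $M'_{ib} = M_{ib} - M_{ij} M_{jb}$ for every $b \neq i$, so row $i$ is changed by subtracting $M_{ij}$ times row $j$. For the new column~$i$ entries with $a \neq i$, I will use the adjoint identity $\langle \rho_{t_a}, t_j(\widecheck{\rho}_{t_i})\rangle = \langle t_j^{-1}(\rho_{t_a}), \widecheck{\rho}_{t_i}\rangle$ (which holds because $t_j$ is unitary) together with the explicit formula $t_j^{-1}(v) = v + \xi_j^{-1}\langle v, \widecheck{\rho}_{t_j}\rangle\, \rho_{t_j}$, which I derive by solving $t_j(t_j^{-1}v) = v$. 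This yields $M'_{ai} = M_{ai} + \xi_j^{-1} M_{ji} M_{aj}$, meaning column $i$ is changed by adding $\xi_j^{-1} M_{ji}$ times column~$j$. The diagonal entry $M'_{ii}$ reduces to $M_{ii}$ directly by unitarity.

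These calculations together show that $M' = L M R$, where $L$ is the elementary row matrix with $-M_{ij}$ in position $(i,j)$ and $R$ is the elementary column matrix with $\xi_j^{-1}M_{ji}$ in position $(j,i)$; both have determinant one since $i \neq j$, so $\det M' = \det M$. The main point to check is the consistency of the row-then-column description at the $(i,i)$ entry: the row step first sends $M_{ii} \mapsto M_{ii} - M_{ij}M_{ji}$ and changes the $(i,j)$ entry via $M_{ij} \mapsto M_{ij}(1 - M_{jj}) = \xi_j M_{ij}$, after which the column step adds $\xi_j^{-1} M_{ji} \cdot \xi_j M_{ij} = M_{ij}M_{ji}$, restoring $M_{ii}$. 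This cancellation is precisely where the normalization $\langle \rho_{t_j}, \widecheck{\rho}_{t_j}\rangle = 1 - \xi_j$ gets used in an essential way, and verifying it is the only piece of delicate bookkeeping in the argument; the case $i=j$ is trivial since $t_j t_i t_j^{-1} = t_i$ there.
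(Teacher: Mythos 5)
Your proof is correct and follows essentially the same route as the paper's: choose $t_j(\rho_{t_i})$ and $t_j(\widecheck{\rho}_{t_i})$ as the root--coroot pair for the conjugate reflection, and realize the change of Gram matrix as a pair of determinant-one elementary row and column operations. In fact you are somewhat more careful than the paper, which asserts the row and column operations ``simultaneously'' without verifying the cross-term at the $(i,i)$ entry; your explicit check that $\langle \rho_{t_j},\widecheck{\rho}_{t_j}\rangle = 1-\xi_j$ makes this term cancel is exactly the bookkeeping needed to make that assertion airtight.
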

\begin{proof}
Since $ t_j$ is unitary, $ t_j(\rho_i)$ is orthogonal to the fixed plane of $ t_j t_i t_j^{-1}$, $t_j(\widecheck{\rho}_i)$ is a scalar multiple of $ t_j(\rho_i)$, and $\langle t_j(\rho_i), t_j(\widecheck{\rho}_i)\rangle = \langle \rho_i, \widecheck{\rho}_i\rangle$.  We have already seen above that the value of the Gram determinant does not depend on which root system we use, so we may as well assume that $\rho_{t_j t_i t_j^{-1}} = t_j(\rho_i)$, in which case it follows automatically that $\widecheck{\rho}_{t_j t_i t_j^{-1}} = t_j(\widecheck{\rho}_i)$.  
Since $ t_j$ is a reflection, $ t_j(\rho_i) = \rho_i - \langle \rho_i, \widecheck{\rho}_{ j}\rangle \rho_j$ and $ t_j(\widecheck{\rho}_i) = \widecheck{\rho}_i - \langle \widecheck{\rho}_i, \widecheck{\rho}_{ j}\rangle \rho_j$.  In terms of the Gram matrix, this means that the Gram matrix of $\ttt'$ can be formed from the Gram matrix of $\ttt$ by simultaneously adding a scalar (specifically, $\langle \rho_i, \widecheck{\rho}_{ j}\rangle$) times row $j$ to row $i$ and adding a scalar (specifically, $\langle \widecheck{\rho}_i, \widecheck{\rho}_{ j}\rangle$) times column $j$ to column $i$.  These elementary operations do not change the value of the determinant.
\end{proof}

We now move on the case-by-case proof of Theorem~\ref{thm:main}.

\subsection{The combinatorial family \texorpdfstring{$G(m, 1, n)$}{G(m, 1, n)} with \texorpdfstring{$m > 1$}{m > 1}}
\label{sec: proof Gm1n}

In this section, we prove Theorem~\ref{volume theorem} for the group $W := G(m, 1, n)$.
Let $g$ be a parabolic quasi-Coxeter element in $W$.  According to Theorem~\ref{thm:parabolic everything in G(m, p, n)}, there are two cases: the parabolic subgroup to which $g$ is associated is either of the form $G(m, 1, \lambda_0) \times \Symm_{\lambda_1} \times \cdots \times \Symm_{\lambda_k}$ or is a Young subgroup $\Symm_{\lambda_1} \times \cdots \times \Symm_{\lambda_k}$ for the symmetric group $\Symm_n \subset W$.

\medskip
\noindent  {\bf Case 1(a):} Suppose $g$ is a quasi-Coxeter element for the parabolic subgroup $G(m, 1, \lambda_0) \times \Symm_{\lambda_1} \times \cdots \times \Symm_{\lambda_k}$.
By Theorem~\ref{thm:parabolic everything in G(m, p, n)}(ii), $g$ has $k + 1$ cycles: a $\lambda_0$-cycle whose color generates $\ZZ/m\ZZ$ and $k$ other cycles of color $0$.  Thus $\gcd(\col(g), m) = 1$ and we have by Proposition~\ref{prop:formula for Ftr in the combinatorial case} that 
\begin{equation}   \label{eq:case 1(a) LHS}
\Ftr_W(g) = n(n+(k+1)-1)\cdot m^{(k+1)-1}  \cdot H_0(\lambda_0, \ldots, \lambda_k) = (n + k)! \cdot m^k \cdot n^{k - 1} \cdot \prod_{i = 0}^k \frac{\lambda_i^{\lambda_i}}{(\lambda_i - 1)!},
\end{equation}
where for the second equality we use the formula for Hurwitz numbers in Theorem~\ref{thm:S_n genus 0}.  It remains to compute the right side of \eqref{eq:volume theorem}.  We begin by considering the sum over $\RGS(W, g)$, taking advantage of the freedom of choice allowed by Section~\ref{sec:invariance}. 

The conjugacy class of $g$ contains the element 
\[
[(1 \cdots \lambda_0) ((\lambda_0 + 1) \cdots (\lambda_0 + \lambda_1)) \cdots ((n - \lambda_k + 1) \cdots n); (0, \ldots, 0, c, 0, \ldots, 0)],
\]
where the unique nonzero color is in position $\lambda_0$.  Since the sum being computed is preserved by conjugacy, we may as well replace $g$ with this element.  We choose the following fixed reduced factorization of $g$, consisting of one diagonal reflection and $n - k - 1$ \defn{adjacent transpositions} (i.e., reflections of the form $(i \ i+1) = [(i \ i+1); 0]$ for some $i$):
\begin{multline*}
g = [\id; (c, 0, \ldots, 0)] \cdot 
(12) \cdots (\lambda_0 - 1 \; \lambda_0) \quad \cdot \quad 
(\lambda_0 + 1 \; \lambda_0 + 2) \cdots ((\lambda_0 + \lambda_1 - 1)(\lambda_0 + \lambda_1)) \quad \cdots \\
\cdots \quad ((n - \lambda_k + 1)(n - \lambda_k + 2)) \cdots (n - 1 \; n).
\end{multline*}
We choose roots as follows: for the diagonal reflection, we take $\rho_0 = (1 - \xi, 0, \ldots, 0)$ and $\widecheck{\rho}_{0} = (1, 0, \ldots, 0)$, where $\xi = \zeta_m^c$ is the non-$1$ matrix entry of $g$.  For the adjacent transpositions, we take $\rho_i = (0, \ldots, 0, 1, -1, 0, \ldots, 0) = \widecheck{\rho}_i$.  The associated Gram matrix is block-diagonal: omitting $0$ entries, the first ($\lambda_0 \times \lambda_0$) block is
\begin{equation}
\label{eq:block}
A_{\lambda_0}=\begin{bmatrix} 
1 - \xi & 1 - \xi &  &  &  &  \\
1 & 2 & -1 &  &  &  \\
 & -1 & 2 & -1 & &  \\
 &  & -1 & 2  & \ddots &  \\
 &  & & \ddots & \ddots & -1 \\
 &  &  &  & -1 & 2
\end{bmatrix},
\end{equation}
and the remaining blocks (of sizes $(\lambda_i - 1) \times (\lambda_i - 1)$ for $i = 1, \ldots, k$) are tridiagonal matrices of the form
\begin{equation}
\label{eq:block weight 0}
B_{\lambda_i-1} = 
\begin{bmatrix} 
2 & -1 &  &  &  &  \\
-1 & 2 & -1 &  &  &  \\
 & -1 & 2 & -1 &  &  \\
 &  & -1 & 2  & \ddots &  \\
 &  &  & \ddots & \ddots & -1\\
 &  &   &  & -1 & 2
\end{bmatrix}.
\end{equation}
The determinants of such tridiagonal matrices satisfy standard recurrence relations (that can be derived by cofactor expansions), and these recurrences yield 
\[
\det B_{\lambda_i-1} = \lambda_i \quad \text{and} \quad \det A_{\lambda_0} = (1-\xi)(\det B_{\lambda_0-1} - \det B_{\lambda_0-2}) = 1-\xi.
\]
Thus $\GD({\bm \rho}_g) =  (1 - \xi) \prod_{i = 1}^k \lambda_i$.

Next we consider the denominator $\GD(\bm{\rho_{\ttt}}\cup\bm{\rho}_g)$ associated to a relative generating set $\ttt$.  
By definition, $\bm{\rho_{\ttt}}\cup\bm{\rho}_g$ is a set of roots associated to a good generating set of reflections for $W$.  Place an arbitrary order on this set of reflections; by Proposition~\ref{Prop: prod_t_i is q-Cox}, the resulting $n$-tuple $\ttt'$ is a reduced factorization of a quasi-Coxeter element $g'$ for $W$.  By Proposition~\ref{prop:RGS characterization}, $\ttt$ consists of $k$ transposition-like reflections, so $\wt(g')=\wt(g)$.  By conjugation-invariance, we may assume that $g' = [(1\cdots n); (0, \ldots, 0, c)]$ for $c = \wt(g)$ (a generator of $\ZZ/m\ZZ$).  By Proposition~\ref{Prop: LW G(m, p, n) transitivity}, all reduced factorizations of $g'$ lie in a single Hurwitz orbit, so $\ttt'$ is Hurwitz-equivalent to the factorization  $\ttt'' = \left( [\id, (c, 0, \ldots, 0)] , (12), \ldots, (n-1 \; n)\right)$ of $g'$.  By Proposition~\ref{prop: GD invariant to Hurwitz moves}, $\GD({\bm \rho}_{\ttt} \cup {\bm \rho}_g) = \GD({\bm \rho}_{\ttt''})$.  The associated Gram matrix is $n \times n$ and of the same form as in Equation~\eqref{eq:block}, so has determinant $1 - \xi$ (where again $\xi = \zeta_m^c$). Therefore we have that $\GD(\bm{\rho}_g)/(\GD(\bm{\rho_{ t}} \cup \bm{\rho}_g)) = \prod_{i=1}^k \lambda_i$.

From the preceding two paragraphs, we have that the summands appearing in the right side of \eqref{eq:volume theorem} for the given $W$, $g$ are constant---specifically, they all equal $\prod_{i = 1}^k \lambda_i$.  Therefore,
\begin{equation}
\label{eq: rgs multiplicity case 1(a)}
\sum_{ \ttt \in \RGS(G(m,1,n), g)} \frac{ \GD(\bm{\rho}_g)}{\GD(\bm{\rho_{\ttt}}\cup\bm{\rho}_g)} = \# \RGS(G(m,1,n), g) \cdot \prod_{i=1}^k \lambda_i.
\end{equation}
By Theorem~\ref{thm:count rgs}(i),
\[
\# \RGS(G(m, 1, n), g) = m^{k} \cdot n^{k - 1} \cdot \prod_{i = 0}^k \lambda_i,
\]
and therefore \eqref{eq: rgs multiplicity case 1(a)} becomes
\begin{equation*}
\sum_{ \ttt \in \RGS(G(m,1,n), g)} \frac{ \GD(\bm{\rho}_g)}{\GD(\bm{\rho_{\ttt}}\cup\bm{\rho}_g)} =  m^{k} \cdot n^{k - 1} \cdot \lambda_0 \cdot \prod_{i = 1}^k \lambda_i^2.
\end{equation*}
This leaves only to compute the prefactors on the right side of \eqref{eq:volume theorem}.  By Theorem~\ref{thm:parabolic everything in G(m, p, n)}, in the generalized cycle decomposition $g = g_0 \cdots g_k$, the factors are precisely the individual cycles.
By Remark~\ref{rem:Fred extends} and Corollary~\ref{cor: Fred for infinite families}(ii) and~(i), we have 
\[
\frac{\Fred(g_0)}{\lR(g_0)!} = \frac{\lambda_0^{\lambda_0}}{\lambda_0!}  \qquad \text{and} \qquad \frac{\Fred(g_i)}{\lR(g_i)!} = \frac{\lambda_i ^{\lambda_i - 2}}{(\lambda_i - 1)!} \text{ for $i = 1, \ldots, k$}.
\]
Putting everything together, we have that the right side of Equation~\eqref{eq:volume theorem} for the selected element $g$ is
\[
(n + k)! \cdot \frac{\lambda_0^{\lambda_0}}{\lambda_0!} \cdot \prod_{i = 1}^k \frac{\lambda_i ^{\lambda_i - 2}}{(\lambda_i - 1)!} \cdot \left(m^{k} \cdot n^{k - 1} \cdot \lambda_0 \cdot \prod_{i = 1}^k \lambda_i^2\right)
=
(n + k)! \cdot m^k \cdot n^{k - 1} \cdot \prod_{i = 0}^k \frac{\lambda_i ^{\lambda_i}}{(\lambda_i - 1)!}.
\]
Comparing with \eqref{eq:case 1(a) LHS} gives the result in this case.
\qed

\bigskip

\noindent {\bf Case 1(b):} Next, suppose $g$ is a quasi-Coxeter element for the parabolic subgroup $\Symm_{\lambda_1} \times \cdots \times \Symm_{\lambda_k}$ of $W = G(m, 1, n)$.  
By Theorem~\ref{thm:parabolic everything in G(m, p, n)}(i), $g$ has $k$ cycles, all of color $0$.  Thus $\gcd(\col(w), m) = m \neq 1$ and we have by Proposition~\ref{prop:formula for Ftr in the combinatorial case} that
\begin{equation}
 \label{eq:case 1(b) LHS}
\Ftr_W(g) = \frac{n^2 (n+k)(n+k-1)m^{k} }{2} \cdot \frac{\varphi(m)}{m} \cdot H_0(\lambda) =  (n + k)! \cdot \frac{\varphi(m)}{2} \cdot m^{k - 1} \cdot n^{k - 1} \cdot \prod_{i = 1}^k \frac{\lambda_i^{\lambda_i}}{(\lambda_i - 1)!},
\end{equation}
where $\varphi$ is the Euler totient function and where again we use the formula of Theorem~\ref{thm:S_n genus 0} for the Hurwitz number $H_0(\lambda)$.

Next, we consider the right side of \eqref{eq:volume theorem} for the given element $g$.  We begin by considering the sum over  $\RGS(W, g)$.

As in Case 1(a), by using the invariance results of Section~\ref{sec:invariance} we may as well replace $g$ with the permutation $(1 \cdots \lambda_1)((\lambda_1 + 1) \cdots (\lambda_1 + \lambda_2)) \cdots ((n - \lambda_k + 1) \cdots n)$ in $\Symm_n$, and select as its fixed factorization the product $(12)(23)\cdots (\lambda_1-1 \ \lambda_1) \cdot ((\lambda_1 + 1)(\lambda_1 + 2)) \cdots (n - 1 \ n)$ of adjacent transpositions.  Then, following the same analysis as in the previous case, the Gram matrix of ${\bm \rho}_g$ is block-diagonal, and its $k$ blocks are of the form \eqref{eq:block weight 0}, with size $(\lambda_i - 1) \times (\lambda_i - 1)$ and determinant $\lambda_i$ for $i = 1, \ldots, k$. Consequently $\GD({\bm \rho}_g) = \prod_{i = 1}^k \lambda_i$.

Next we consider the denominator $\GD({\bm\rho}_{\ttt} \cup {\bm\rho}_g)$ associated to a relative generating set $\ttt$.  By Proposition~\ref{prop:RGS characterization}(ii), $\ttt$ consists of a rooted tree relative to the $k$ cycles of $g$, one of whose factors is a diagonal reflection whose color $c$ is a primitive generator for $\ZZ/m\ZZ$.  Just as in Case 1(a), when such a set of reflections is combined with the set of reflections in our fixed factorization, the Gram determinant of the associated roots is uniquely determined by the eigenvalue $\xi = \zeta_m^c$ of the diagonal reflection, with value $\GD({\bm\rho}_{\ttt} \cup {\bm\rho}_g)=1 - \xi$.

Unlike in Case 1(a), the summands on the right side of \eqref{eq:volume theorem} are not constant.  However, as observed in \cite[Rem.~7.5]{DLM2}, each of the $\varphi(m)$ primitive $m$th roots of unity occurs equally often as an eigenvalue for the diagonal reflection in the elements of $\RGS(W, g)$ (see also Section~\ref{Sec: GD statistic and RGS for Gmpn}).  Combining this with Theorem~\ref{thm:count rgs}(ii) gives
\begin{align}
\label{eq: rgs multiplicity case 1(b)}
\sum_{ \ttt \in \RGS(W, g)} \frac{ \GD(\bm{\rho}_g)}{\GD(\bm{\rho_{\ttt}}\cup\bm{\rho}_g)} &= 
\frac{\# \RGS(W,g)}{\varphi(m)}\sum_{\xi \textrm{ prim.}} \frac{\prod_{i=1}^k \lambda_i}{1-\xi}
\\
\notag
&= 
 m^{k - 1} \cdot n^{k - 1} \cdot \prod_{i=1}^k \lambda_i^2 \cdot \left( \sum_{\xi \textrm{ prim.}} \frac{1}{1-\xi}\right).    
\end{align}

Since (by Theorem~\ref{thm:parabolic everything in G(m, p, n)}) each $g_i$ is a cycle of color $0$, we have by Remark~\ref{rem:Fred extends} and Corollary~\ref{cor: Fred for infinite families}(i) that $\dfrac{\Fred(g_i)}{\lR(g_i)!} = \dfrac{\lambda_i ^{\lambda_i - 2}}{(\lambda_i - 1)!}$ for $i = 1, \ldots, k$.  Moreover, by Proposition~\ref{prop. primitive root identity 1} (stated and proved below in Appendix~\ref{appendix}), we have $\sum_{\xi \textrm{ prim.}} 1/(1-\xi) = \varphi(m)/2$.  Putting this all together, we have that the right side of \eqref{eq:volume theorem} is
\[
(n + k)! \cdot m^{k - 1} \cdot n^{k - 1} \cdot \prod_{i = 1}^k \frac{\lambda_i ^{\lambda_i}}{(\lambda_i - 1)!}   \cdot \frac{\varphi(m)}{2}.
\]
Comparing with \eqref{eq:case 1(b) LHS} gives the result in this case.
\qed

\subsection{The combinatorial family \texorpdfstring{$G(m, m, n)$}{G(m, m, n)} with \texorpdfstring{$m > 1$}{m > 1}}
\label{sec: proof Gmmn}

In this section, we prove Theorem~\ref{volume theorem} for the group $W := G(m, m, n)$.
Let $g$ be a parabolic quasi-Coxeter element in $W$.  According to Theorem~\ref{thm:parabolic everything in G(m, p, n)}, there are two cases: the parabolic subgroup to which $g$ is associated is either of the form $G(m, m, \lambda_0) \times \Symm_{\lambda_1} \times \cdots \times \Symm_{\lambda_k}$ or is a Young subgroup $\Symm_{\lambda_1} \times \cdots \times \Symm_{\lambda_k}$ for the symmetric group $\Symm_n \subset W$.

\medskip

\noindent {\bf Case 2(a):} 
Suppose that $g$ is a quasi-Coxeter element for the parabolic subgroup $G(m, m, \lambda_0) \times \Symm_{\lambda_1} \times \cdots \times \Symm_{\lambda_k}$, with $\lambda_0 \geq 2$. 

In this case, by Theorem~\ref{thm:parabolic everything in G(m, p, n)}(iv), $g$ has $k + 2$ cycles---$k$ cycles of color $0$ (having lengths $\lambda_1, \ldots, \lambda_k$) and two cycles whose colors generate $\ZZ/m\ZZ$ and sum to $0$ (having lengths $a$ and $b$, with $a + b = \lambda_0$)---and the gcd of the cycle colors with $m$ is $1$. Thus by the $d=1$ case of Proposition~\ref{prop:formula for Ftr in the combinatorial case} and the formula for the Hurwitz numbers in Theorem~\ref{thm:S_n genus 0} we have
\begin{equation}
 \label{eq:case 2(a) LHS}
\Ftr_W(g) =  m^{(k+2) - 1} \cdot H_0(a, b, \lambda_1, \ldots, \lambda_k)
         = (n+k)! \cdot m^{k+1} \cdot n^{k-1} \cdot  \frac{a^ab^b}{(a-1)!(b-1)!} \cdot \prod_{i=1}^k \frac{\lambda_i^{\lambda_i}}{(\lambda_i-1)!}.
\end{equation}

Next, we consider the right side of \eqref{eq:volume theorem} for the given element $g$, again taking advantage of the invariances allowed by Section~\ref{sec:invariance}.
The conjugacy class of $g$ contains an element that is the product $g_0 g_1\cdots g_k$ where
\[
g_0 =  (12) \cdot (23) \cdots (a \; a+1) \cdot [(a \; a+1); c] \cdot (a + 1 \; a + 2) \cdots (\lambda_0 - 1 \; \lambda_0),
\]
where $c$ is the color of the $a$-cycle  in $W$ (a primitive generator of $\ZZ/m\ZZ$), and each of the $g_i$ is a product of adjacent transpositions, which together form a generating set of the Young subgroup $\Symm_{\lambda_1} \times \cdots \times \Symm_{\lambda_k}$.  As before, we may choose the roots and coroots of the adjacent transpositions to be $\rho_i = \widecheck{\rho}_i = (0, \ldots, 0, 1, -1, 0, \ldots, 0)$ while the anomalous factor $[(a - 1 \; a); c]$ (of non-unit eigenvalue $-1$) has root and coroot $\rho_a = \widecheck{\rho}_a = (0, \ldots, 0, 1, -\xi, 0, \ldots, 0)$ where $\xi := \zeta_m^c$.  The associated Gram matrix is block-diagonal.  The first ($\lambda_0 \times \lambda_0$) block $D_{\lambda_0}$ is of one of the following forms: if $\lambda_0 = 2$ (so $a = b = 1$) then $D_2=\begin{bmatrix}
2 & 1 + \overline{\xi} \\
1+\xi & 2
\end{bmatrix}$, and otherwise $D_{\lambda_0}$ is one of 
\begin{equation}
\label{eq:mmn block}
\includegraphics[align=c]{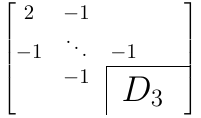},
\quad
\includegraphics[align=c]{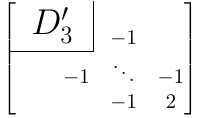},
\quad \text{or} \quad
\includegraphics[align=c]{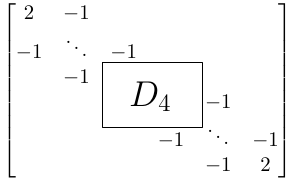},
\end{equation}
where
\[
D_3 = \begin{bmatrix}
2 & -1 & -1 \\
-1 & 2 & 1 + \overline{\xi}\\
-1 & 1+\xi & 2
\end{bmatrix}, \quad
D_3' = \begin{bmatrix}
2 & 1+\overline{\xi} & -1 \\
1+ \xi & 2 & -\xi\\
-1 & -\overline{\xi} & 2
\end{bmatrix}, \;\; \text{and} \;\;
D_4 = \begin{bmatrix} 
2 & -1 & -1& \\
-1 & 2 & 1 + \overline{\xi} & - 1\\
-1 & 1 + \xi & 2 & -\xi\\
& -1 & -\overline{\xi} & 2
\end{bmatrix},
\]
depending on whether $a = \lambda_0 - 1$ (and $b = 1$), $a = 1$ (and $b = \lambda_0 - 1$), or $a$ and $b$ are both at least $2$.
We claim that the determinant of such blocks is always $\det D_{\lambda_0} =2 - \xi - \overline{\xi}$. Indeed, one can check directly that $\det D_2=\det D_3 = \det D'_3=\det D_4 = 2-\xi-\overline{\xi}$. Also, if the block $D_{\lambda_0}$ starts with \({\small  \begin{array}{cc} 2&-1\\ -1 & \end{array}}\) (respectively, ends with  \({\small  \begin{array}{cc} &-1\\ -1 & 2\end{array}}\)), doing a cofactor expansion along the first (respectively, last) row and using induction gives \[
\det D_{\lambda_0} = 2 \cdot \det D_{\lambda_0-1} -1 \cdot \det D_{\lambda_0-2} = (2-1)(2-\xi-\overline{\xi}) = 2-\xi-\overline{\xi},
\]
where $D_{\lambda_0-1}$ and $D_{\lambda_0-2}$ are the respective $(\lambda_0-1)\times (\lambda_0-1)$ and $(\lambda_0-2)\times (\lambda_0-2)$ submatrices of $D_{\lambda_0}$.
The remaining blocks (of sizes $(\lambda_i - 1) \times (\lambda_i - 1)$ for $i = 1, \ldots, k$) are of the same form as the block $B_{\lambda_i-1}$ in \eqref{eq:block weight 0}, with determinants $\lambda_1, \ldots, \lambda_k$.  Thus $\GD({\bm\rho}_g) = (2 -  \xi - \overline{\xi})\lambda_1 \cdots \lambda_k$.

Next we consider the denominator $\GD({\bm\rho}_{\ttt} \cup {\bm\rho}_g)$ associated to a relative generating set $\ttt$.  By Proposition~\ref{prop:RGS characterization}(i), $\ttt$ consists of a tree relative to the partition $\Pi_g$ of $\{1, \ldots, n\}$ into the $k + 1$ generalized cycles of $g$.  Thus, for any such $\ttt$, the unicycle associated to ${\bm\rho}_{\ttt} \cup {\bm\rho}_g$ has the same graph cycle as the unicycle component of ${\bm\rho}_g$. Consequently all $\ttt$ give the same value $\delta({\bm\rho}_{\ttt} \cup {\bm\rho}_g) = c$ for the quantity $\delta$ defined in Section~\ref{sec:rgs}. Let $\ttt'$ be the tuple of roots that results from placing an arbitrary order on the set ${\bm\rho}_{\ttt} \cup {\bm\rho}_g$. By Proposition~\ref{Prop: prod_t_i is q-Cox}, $\ttt'$ is a reduced factorization of a quasi-Coxeter element $g'$ for $W$.  By Remark~\ref{rem: Shi's delta}, the two cycles of $g'$ have colors $c$ and $-c$ (the nonzero colors of cycles of $g$), and by conjugation-invariance we may assume $g'$ has underlying permutation $(1 \cdots a')((a' + 1) \cdots n)$ for some $a'$.  By Proposition~\ref{Prop: LW G(m, p, n) transitivity}, all reduced factorizations of $g'$ lie in a single Hurwitz orbit, so $\ttt'$ is Hurwitz-equivalent to $\ttt'' = \left((12), \ldots, (a' \; a'+1), [(a' \; a'+1); c], (a'+1 \; a' + 2), \ldots, (n - 1 \; n)\right)$.  By Proposition~\ref{prop: GD invariant to Hurwitz moves}, $\GD({\bm\rho}_{\ttt} \cup {\bm\rho}_g) = \GD({\bm\rho}_{\ttt''})$.  The associated Gram matrix is $n\times n$ and of the same form as one of the matrices in \eqref{eq:mmn block}, so has determinant $2 - \xi - \overline{\xi}$ (where again $\xi = \zeta_m^c$).

From the preceding two paragraphs, as in Case 1(a), we have that the summands appearing in the right side of \eqref{eq:volume theorem} for the given $W$ and $g$ are constant---specifically, they all equal $\prod_{i = 1}^k \lambda_i$.  Therefore, 
\begin{equation} \label{eq: rgs multiplicity case 2(a)}
\sum_{ \ttt \in \RGS(G(m,m,n), g)} \frac{ \GD(\bm{\rho}_g)}{\GD(\bm{\rho_{\ttt}}\cup\bm{\rho}_g)} = \# \RGS(G(m,m,n), g) \cdot \prod_{i=1}^k \lambda_i.
\end{equation}
By Theorem~\ref{thm:count rgs}(i),
\[
\# \RGS(G(m, m, n), g) = m^{k} \cdot n^{k - 1} \cdot \prod_{i = 0}^k \lambda_i.
\]
This leaves only to compute the prefactors on the right side of \eqref{eq:volume theorem}.  By Remark~\ref{rem:Fred extends} and Corollary~\ref{cor: Fred for infinite families}(iii) and (i), we have (recalling $\lambda_0=a+b$) that
\[
\frac{\Fred(g_0)}{\lR(g_0)!} = \frac{m a^a b^b}{\lambda_0(a-1)!(b-1)!}  \qquad \text{and} \qquad \frac{\Fred(g_i)}{\lR(g_i)!} = \frac{\lambda_i ^{\lambda_i - 2}}{(\lambda_i - 1)!} \text{ for $i = 1, \ldots, k$}.
\]
Putting everything together, we have that the right side of Equation~\eqref{eq:volume theorem} for the selected element $g$ is
\begin{multline*}
(n + k)! \cdot \frac{m a^a b^b}{\lambda_0(a-1)!(b-1)!} \cdot \prod_{i = 1}^k \frac{\lambda_i ^{\lambda_i - 2}}{(\lambda_i - 1)!} \cdot \left(m^{k} \cdot n^{k - 1} \cdot \lambda_0 \cdot \prod_{i = 1}^k \lambda_i^2\right)
\\
= (n + k)! \cdot m^{k + 1} \cdot n^{k - 1} \cdot \frac{a^ab^b}{(a - 1)! (b - 1)!} \cdot \prod_{i = 1}^k  \frac{\lambda_i^{\lambda_i}}{(\lambda_i - 1)!}.
\end{multline*}
Comparing with \eqref{eq:case 2(a) LHS} gives the result in this case.
\qed

\medskip

\noindent {\bf Case 2(b):} 
Finally, suppose that $g$ is a quasi-Coxeter element for the parabolic subgroup $\Symm_{\lambda_1} \times \cdots \times \Symm_{\lambda_k}$ of $G(m, m, n)$.  

By Theorem~\ref{thm:parabolic everything in G(m, p, n)}(i), $g$ has $k$ cycles, all of color $0$, and so the gcd of the cycle colors and $m$ is equal to $m$.  Then by  Proposition~\ref{prop:formula for Ftr in the combinatorial case} we have
\begin{equation}
    \label{eq:case 2(b) LHS}
\Ftr_W(g) = m^{k+1} \cdot H_1(\lambda_1, \ldots, \lambda_k) \cdot \frac{1}{m^2}\cdot \sum_{r \mid m} \mu(m / r) \cdot r^{2}.
\end{equation}
The quantity $H_1(\lambda_1, \ldots, \lambda_k)$ is given by the following explicit formula.

\begin{theorem}[{Goulden--Jackson \cite{GJ99}; Vakil \cite{V01}}]
\label{GJVGJV}
  For $\lambda=(\lambda_1,\ldots,\lambda_k)$, the number of genus-$1$ transitive transposition factorizations in $\Symm_n$ of a permutation of cycle type $\lambda$ is
  \[
H_1(\lambda) = \frac{1}{24} (n+k)! \left(\prod_{i=1}^k
  \frac{\lambda_i^{\lambda_i}}{(\lambda_i-1)!}\right) \left( n^k -
  n^{k-1}  - \sum_{i=2}^k (i-2)! \cdot e_i(\lambda)\cdot  n^{k-i}\right),
\]
where $e_i$ denotes the $i$th elementary symmetric function.
\end{theorem}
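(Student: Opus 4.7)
The plan is to establish the formula via the Goulden--Jackson cut-and-join recurrence for the Hurwitz numbers $H_g(\lambda)$, using the Hurwitz formula of Theorem~\ref{thm:S_n genus 0} as the base of an induction on $|\lambda|$. The cut-and-join equation is the natural combinatorial tool here because multiplying a transitive transposition factorization by an additional transposition either splits one cycle of the product into two (a ``cut'') or merges two cycles into one (a ``join''), and these are the only two possibilities.

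First I would derive the recurrence itself. For a fixed $\sigma \in \Symm_n$ of cycle type $\lambda = (\lambda_1, \ldots, \lambda_k)$ and a transposition $\tau$, analyzing the cycle structure of $\tau \sigma$ yields a linear relation of the schematic form
\[
(n + k + 2g - 2) \cdot H_g(\lambda) \;=\; \sum_{i < j} (\lambda_i + \lambda_j) \cdot H_g(\lambda^{\mathrm{join}}_{i,j}) \;+\; \sum_{i} \sum_{\alpha + \beta = \lambda_i} \alpha\beta \cdot \Bigl[ H_{g-1}(\lambda^{\mathrm{cut}}_{i;\alpha,\beta}) + \Xi_{i;\alpha,\beta} \Bigr],
\]
where $\lambda^{\mathrm{join}}_{i,j}$ replaces $\lambda_i, \lambda_j$ by $\lambda_i+\lambda_j$, where $\lambda^{\mathrm{cut}}_{i;\alpha,\beta}$ replaces $\lambda_i$ by the pair $\alpha, \beta$, and where $\Xi_{i;\alpha,\beta}$ collects the disconnected contributions of genus-$(g_1,g_2)$ factorizations with $g_1 + g_2 = g$, weighted by a multinomial coefficient counting interleavings. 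The coefficient on the left counts the number of transpositions in each factorization, and (after noting that the choice of which transposition to ``peel off'' is symmetric) relates $H_g(\lambda)$ at partition size $n$ to values at size $n-1$ and to products of smaller values.

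Next, I would specialize to $g = 1$, where the recurrence reduces to
\[
(n + k)\cdot H_1(\lambda) \;=\; \sum_{i<j}(\lambda_i+\lambda_j) H_1(\lambda^{\mathrm{join}}_{i,j}) \;+\; \sum_i\sum_{\alpha+\beta=\lambda_i}\alpha\beta \Bigl[H_0(\lambda^{\mathrm{cut}}_{i;\alpha,\beta}) + (\text{disconnected split into two genus-0 pieces})\Bigr].
\]
The crucial point is that the right side is now expressible entirely in terms of $H_0$'s, given by Theorem~\ref{thm:S_n genus 0}, together with $H_1(\lambda')$ for partitions $\lambda'$ of smaller total size. I would then plug the conjectured closed form into both sides and verify equality by induction on $n$, with small base cases (e.g.\ $\lambda = (1)$, $(1,1)$, $(2)$) checked directly.

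The main obstacle will be the symmetric-function bookkeeping in this verification: the join terms contribute $H_1$-values at $k-1$ parts with the $e_i$-statistics transformed in a specific way, while the cut terms produce both new $H_1$-values at $k+1$ parts and convolutions of two $H_0$-values whose sum over partitions $\alpha+\beta = \lambda_i$ must be evaluated in closed form. Matching the result against the polynomial $n^k - n^{k-1} - \sum_{i\geq 2}(i-2)!\, e_i(\lambda)\, n^{k-i}$ requires non-trivial identities for how $e_i(\lambda)$ changes under the operations $\lambda \mapsto \lambda^{\mathrm{join}}_{i,j}$ and $\lambda \mapsto \lambda^{\mathrm{cut}}_{i;\alpha,\beta}$, together with Abel-type convolution identities for the trees-with-rooted-edges factors $\lambda_i^{\lambda_i}/(\lambda_i-1)!$. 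Organizing these identities so that the induction step collapses cleanly is the technical heart of the proof.
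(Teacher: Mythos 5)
The paper does not actually prove this statement: Theorem~\ref{GJVGJV} is imported from Goulden--Jackson \cite{GJ99} and Vakil \cite{V01} and used as a black box in Case 2(b) of Section~\ref{sec: proof Gmmn}, so there is no internal proof to compare against. Your overall strategy --- a cut-and-join recursion combined with induction against the conjectured closed form, with Theorem~\ref{thm:S_n genus 0} as genus-$0$ input --- is indeed the route of the cited Goulden--Jackson proof. However, as written the proposal contains concrete structural errors and leaves the entire substance of the argument unexecuted.

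First, the shape of the recursion is misdescribed. Both cutting and joining preserve $n=|\lambda|$: a join lowers the number of parts $k$ by one at the same genus, and a connected cut raises $k$ by one while lowering the genus by one, so nothing on the right-hand side lives ``at size $n-1$'' and an induction on $|\lambda|$ alone does not close. The quantity that decreases is the number of transpositions $r=n+k+2g-2$; only the disconnected terms drop to smaller integers $n_1+n_2=n$. Second, in your displayed $g=1$ recurrence the disconnected cut contributions are described as a ``split into two genus-$0$ pieces,'' but the genus bookkeeping for a disconnected prefix forces $g_1+g_2=g=1$, so these terms are products $H_1(\mu^{(1)})\cdot H_0(\mu^{(2)})$ (the genus-$0\times$genus-$0$ splittings belong to the genus-$0$ recursion); each such term also carries a sum over all distributions of the uncut cycles between the two components and an interleaving binomial coefficient, all of which is hidden inside $\Xi_{i;\alpha,\beta}$. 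Third, and most importantly, the verification that the stated closed form satisfies the recursion --- the Abel-type convolutions of $\alpha^{\alpha}\beta^{\beta}/\bigl((\alpha-1)!(\beta-1)!\bigr)$ over $\alpha+\beta=\lambda_i$, the transformation of the polynomial $n^k-n^{k-1}-\sum_{i\ge 2}(i-2)!\,e_i(\lambda)n^{k-i}$ under joins, and the cancellation against the genus-$0$ and mixed disconnected terms --- is precisely the content of \cite{GJ99} and is not carried out here. The plan points in the right direction, but the two structural claims above (the induction variable and the genera of the disconnected pieces) are wrong as stated and would have to be corrected before the verification could even be set up.
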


Next, we consider the right side of \eqref{eq:volume theorem} for the given element $g$.  We begin by considering the sum over $\RGS(W, g)$.

Taking advantage of the freedom allowed by Section~\ref{sec:invariance}, we may as well assume that
\[
g = (1 \cdots \lambda_1)((\lambda_1 + 1) \cdots (\lambda_1 + \lambda_2)) \cdots ((n - \lambda_k + 1) \cdots n)
\]
and that its fixed reduced factorization is a product of adjacent transpositions.  Then, as in Case~1(b), the associated Gram matrix is block-diagonal, with blocks of the form $B_{\lambda_i-1}$ in \eqref{eq:block weight 0}, with respective determinant $\lambda_i$, and so $\GD({\bm\rho}_g) = \prod_{i = 1}^k \lambda_i$.

Next we consider the denominator $\GD({\bm\rho}_{\ttt} \cup {\bm\rho}_g)$ associated to a relative generating set $\ttt$.  By Proposition~\ref{prop:RGS characterization}(iii), $\ttt$ corresponds to a unicycle relative to the $k$ cycles of $g$.  We then combine the set $\ttt$ with the set of reflections in our fixed factorization of $g$. Just as in Case 2(a), the combined set of reflections corresponds to a unicycle, with Gram determinant $\GD({\bm\rho}_{\ttt} \cup {\bm\rho}_g) = 2 - \xi - \overline{\xi}$, where $\xi = \zeta_m^{\delta({\bm\rho}_{\ttt} \cup {\bm\rho}_g)}$ and $\delta$ is as defined in Section~\ref{sec:rgs}.

As in Case 1(b), the summands on the right side of \eqref{eq:volume theorem} are not constant, and so we split the sum according to the value of the summand. As observed in \cite[Rem.~7.5]{DLM2}, each primitive generator of $\ZZ/m\ZZ$ occurs equally often as the value of $\delta({\bm\rho}_{\ttt} \cup {\bm\rho}_g)$.  It follows that
\begin{equation}
    \label{eq:case 2(b) sum} 
\sum_{ \ttt \in \RGS(W, g)} \frac{ \GD(\bm{\rho}_g)}{\GD(\bm{\rho_{\ttt}}\cup\bm{\rho}_g)} 
= \frac{\#\RGS(W, g)}{\varphi(m)} \cdot 
\sum_{\xi \textrm{ prim.}} \frac{\prod_{i=1}^k \lambda_i}{2-\xi - \overline{\xi}}.
\end{equation}
By Theorem~\ref{thm:count rgs}(iii),
\begin{align*}
\frac{\# \RGS(W, g)}{\varphi(m)}  &= \frac{m^{k - 1}}{2} \prod_{i = 1}^k \lambda_i  \cdot
\Big(n^k - n^{k-1} -\sum_{j=2}^k  (j-2)! \cdot n^{k-j} e_j(\lambda)\Big).
\end{align*}
Substituting this into \eqref{eq:case 2(b) sum} gives
\[
\sum_{ \ttt \in \RGS(W, g)} \frac{ \GD(\bm{\rho}_g)}{\GD(\bm{\rho_{\ttt}}\cup\bm{\rho}_g)} 
= \frac{m^{k - 1}}{2} \prod_{i = 1}^k \lambda_i^2  \cdot
\Big(n^k - n^{k-1} -\sum_{j=2}^k  (j-2)! \cdot n^{k-j} e_j(\lambda)\Big) \cdot 
\sum_{\xi \textrm{ prim.}} \frac{1}{2-\xi - \overline{\xi}}.
\]
Since (by Theorem~\ref{thm:parabolic everything in G(m, p, n)}(iii)) each $g_i$ is a cycle of color $0$, we have by Remark~\ref{rem:Fred extends} and Corollary~\ref{cor: Fred for infinite families}(i) that $\dfrac{\Fred(g_i)}{\lR(g_i)!} = \dfrac{\lambda_i ^{\lambda_i - 2}}{(\lambda_i - 1)!}$ for $i = 1, \ldots, k$. Next, by Proposition~\ref{prop: primitive root identity 2} (stated and proved below in Appendix~\ref{appendix}), we have $\sum_{\xi \textrm{ prim.}} 1/(2-\xi - \overline{\xi}) = \frac{1}{12} \sum_{r \mid m} \mu(m/r) \cdot r^2$.  Putting this all together, we have that the right side of \eqref{eq:volume theorem} is
\begin{multline*}
(n + k)! \cdot  \prod_{i=1}^k \frac{\lambda_i^{\lambda_i - 2}}{(\lambda_i - 1)!} \cdot \left(
 \frac{m^{k - 1}}{2} \prod_{i=1}^k \lambda_i^2 \cdot 
\Big(n^k - n^{k-1} -\sum_{j=2}^k  (j-2)!n^{k-j} e_j(\lambda)\Big) \cdot
\frac{1}{12} \sum_{r \mid m} \mu(m/r) \cdot r^2\right) \\
=(n + k)! \cdot  \frac{m^{k - 1}}{24} \cdot  \prod_{i=1}^k
\frac{\lambda_i^{\lambda_i}}{(\lambda_i - 1)!} \cdot \Big(n^k -
n^{k-1} -\sum_{j=2}^k  (j-2)!n^{k-j} e_j(\lambda)\Big) \cdot
\Big(\sum_{r \mid m} \mu(m/r) \cdot r^2\Big) \\
= m^{k-1}H_1(\lambda) \cdot \sum_{r \mid m} \mu(m/r) \cdot r^2,
\end{multline*}
where in the last step we use the formula for $H_1(\lambda)$ in Theorem~\ref{GJVGJV}. Comparing with \eqref{eq:case 2(b) LHS} gives the result in this case.
\qed

\subsection{Weyl groups via a cut-and-join-type recursion}
\label{Sec: cut-and-join counting}

In this section, we give a generalization (Theorem~\ref{thm:cut and join}) of the cut-and-join recurrence relation for the enumeration of full factorizations in $\Symm_n$ of Goulden and Jackson \cite{GJ97}.\footnote{This is separate from the work of \cite{fesler2023}, where a cut-and-join recurrence is developed for reflection factorizations in types $B$, $D$, but without addressing the transitivity/fullness property.}  While this result is interesting on its own terms, its main application in the present paper is to prove Theorem~\ref{Thm: Weyl case} for the group $E_8$ in the next subsection.

We begin with a lemma that records some information about prefixes of minimum-length full reflection factorizations of the identity in a real reflection group.

\begin{lemma}
\label{Lem: stat on (W_i,g_i)}
In a real reflection group $W$, consider any minimum-length full reflection factorization $t_1\cdot t_2\cdots t_{2n}=\id$ of the identity.
Then for any $i=1,\dots, 2n$, the pair $(g_i,W_i)$ with $W_i=\langle t_1,\cdots ,t_i\rangle$ and $g_i=t_1\cdots t_i$ has the following properties:
\begin{enumerate}
\item $2 \cdot \rank(W_i) - \lR(g_i)=i$,
\item the factorization $g_i = t_1\cdots t_i$ is a minimum-length full reflection factorization in $W_i$, and
\item $g_i$ is a parabolic quasi-Coxeter element of $W_i$.
\end{enumerate}
\end{lemma}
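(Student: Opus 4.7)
The plan is to establish (2) directly from the global minimality hypothesis, and then to sandwich the invariants of $W_i$ between matching upper and lower bounds drawn from the prefix and the suffix of the factorization, which will simultaneously yield (1) and (3).

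For (2), the inequality $\ltr[W_i](g_i)\leq i$ is immediate, since $t_1\cdots t_i=g_i$ is itself a length-$i$ factorization in $W_i$ whose factors generate $W_i$. For the reverse inequality, I would argue by contradiction: any strictly shorter full factorization $g_i=s_1\cdots s_k$ in $W_i$ could be concatenated with the suffix $t_{i+1}\cdots t_{2n}$ to produce a full factorization $\id=s_1\cdots s_k\cdot t_{i+1}\cdots t_{2n}$ of the identity in $W$ of length $k+(2n-i)<2n$, contradicting $\ltr[W](\id)=2n$.

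For (1) and (3), I would introduce the symmetric object $U_i:=\langle t_{i+1},\ldots,t_{2n}\rangle$ and note, by the identical argument applied to the reversed factorization $t_{2n}\cdots t_1=\id$, that $\ltr[U_i](g_i^{-1})=2n-i$. Since reflection length in a real reflection group coincides with the codimension of the fixed space, $\lR[W_i](g_i)=\lR[W](g_i)=\lR[U_i](g_i^{-1})$; call this common value $\ell$. Applying Corollary~\ref{Corol: lr(g)+ltr(g)>=2m} to the pairs $(W_i,g_i)$ and $(U_i,g_i^{-1})$ and summing yields the upper bound $\rank(W_i)+\rank(U_i)\leq n+\ell$. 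The matching lower bound comes from a dimension count on fixed spaces: the containments $V^{W_i},V^{U_i}\subseteq V^{g_i}$ combined with $V^{W_i}\cap V^{U_i}=V^W=\{0\}$ (the latter forced by fullness of the original factorization) give $\dim V^{W_i}+\dim V^{U_i}\leq \dim V^{g_i}$, which rearranges to $\rank(W_i)+\rank(U_i)\geq n+\ell$.

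The sandwich forces equality throughout, so the Corollary bound for $W_i$ is tight: $\lR[W_i](g_i)+i=2\rank(W_i)$, which is (1). By Theorem~\ref{Prop: characterization of pqCox}(iv), this tightness is precisely the criterion for $g_i$ to be a parabolic quasi-Coxeter element of $W_i$, yielding (3). The most delicate ingredient is the dimension-count lower bound, which is what genuinely uses the fullness hypothesis together with the real-reflection-group identity $\lR(g)=\codim V^g$; without these, the two inequalities on $\rank(W_i)+\rank(U_i)$ would not meet.
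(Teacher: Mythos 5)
Your proof is correct, but it reaches the conclusion by a genuinely different route from the paper's. The paper proves (1) first, by a local induction on $i$: it tracks the statistic $m_i=2\rank(W_i)-\lR(g_i)$ one factor at a time, using Lemma~\ref{Lem: g->gt} to show that whenever $\lR$ drops the rank of the prefix group cannot grow, so that $m_{i+1}\leq m_i+1$ always, and the boundary values $m_0=0$, $m_{2n}=2n$ force $m_i=i$; part (2) is then deduced from (1) together with Corollary~\ref{Corol: lr(g)+ltr(g)>=2m}, and (3) as in your argument. You invert this logical order: your concatenation argument establishes (2) directly from global minimality (a cleaner and more elementary step than the paper's, and independent of (1)), and you then obtain (1) by a global sandwich at each fixed $i$, pairing the prefix group $W_i$ with the suffix group $U_i=\langle t_{i+1},\ldots,t_{2n}\rangle$, applying Corollary~\ref{Corol: lr(g)+ltr(g)>=2m} to both, and matching against the fixed-space dimension count $\dim V^{W_i}+\dim V^{U_i}\leq\dim V^{g_i}$ (which is where fullness enters, via $V^{W_i}\cap V^{U_i}=V^W$). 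What your approach buys is that it avoids Lemma~\ref{Lem: g->gt} and the step-by-step bookkeeping entirely, and it isolates exactly where the two hypotheses (fullness and realness) are used; the cost is that you lean twice on Carter's identity $\lR=\codim V^{\mathrm{fix}}$ (for the equalities $\lR[W_i](g_i)=\lR[U_i](g_i^{-1})=\codim V^{g_i}$ and for the dimension count), whereas the paper's induction localizes that input inside Lemma~\ref{Lem: g->gt}. Both proofs invoke Theorem~\ref{Prop: characterization of pqCox}[(i)$\Leftrightarrow$(iv)] identically for part (3). The only cosmetic caveat is your assumption $V^W=\{0\}$; if the action is not essential the same count goes through with $\{0\}$ replaced by $V^W$ and all codimensions taken relative to $V$, so nothing is lost.
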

\begin{proof}
For part (1), let $m_i:=2\rank(W_i)-\lR(g_i)$.  Since $g_{i + 1} = g_i t_{i + 1}$ and $W$ is real, we have $\lR(g_{i + 1}) = \lR(g_i) \pm 1$.  Notice next that $\rank(W_i)\leq \rank(W_{i+1}) \leq \rank(W_i)+1$.  Therefore, if $\lR(g_{i+1})=\lR(g_i)+1$, then $m_{i+1}\leq m_i +1$. On the other hand, if $\lR(g_it_{i+1})=\lR(g_i) - 1$, then by Lemma~\ref{Lem: g->gt} we have that $V^{t_{i + 1}} \supseteq V^{g_i} \supseteq V^{W_i}$.  Since $W_{i + 1} = \langle W_i, t_{i + 1}\rangle$, it follows that $V^{W_{i + 1}} = V^{W_i}$ and so that  $\rank(W_{i+1})=\rank(W_i)$.  Thus in this case $m_{i+1}=m_i + 1$. Since we always have $m_{i+1}\leq m_i +1$ and since $m_0=0$ and $m_{2n}=2n$, we must have $m_i=i$ for all $i=1, \dots, 2n$.

For part (2), the factorization is full by definition of $W_i$, and therefore $\ltr[W_i](g_i) \leq i$.  On the other hand, combining Corollary~\ref{Corol: lr(g)+ltr(g)>=2m} with part (1), we have that $\ltr[W_i](g_i) \geq 2 \rank(W_i) - \lR[W_i](g_i) = i$.

Finally, part (3) is a direct consequence of parts (1) and (2), the characterization of parabolic quasi-Coxeter elements from Theorem~\ref{Prop: characterization of pqCox}[(i)$\Leftrightarrow$(iv)], and the fact that in a real reflection group $\lR(g_i)=\codim(V^{g_i})=\lR[W_i](g_i)$ (see Section~\ref{sec:reflen}).
\end{proof}

The name ``cut-and-join'' comes from the fact that multiplying a permutation $g \in \Symm_n$ by a transposition $t$ either \emph{cuts} a cycle of $g$ into two cycles, or \emph{joins} two cycles of $g$ into a single cycle.  In our setting, $\Symm_n$ is replaced by a real reflection group $W$, and the transposition $t$ is replaced by a reflection.

\begin{theorem}[cut-and-join recursion for real groups]
\label{thm:cut and join}
Let $W$ be a real reflection group of rank $n$, $g\in W$ a parabolic quasi-Coxeter element, and $W_g$ the parabolic closure of $g$ in $W$. Then the number $\Ftr_W(g)$ of minimum-length full reflection factorizations of $g$ satisfies
\begin{equation}
    \label{cut-and-join-equation for real groups}
\Ftr_W(g)=\Biggl(\,\sum_{t\in W_g}\sum_{\substack{gt \in W'\leq W, \\ \text{ condition (1)}}}\Ftr_{W'}(gt)\Biggr)\,+\,\Biggl(\,\sum_{t\notin W_g}\sum_{\substack{gt \in W'\leq W, \\ \text{condition (2)}}}\Ftr_{W'}(gt)\Biggr).
\end{equation}
The outer summations are over reflections $t$, while the inner summations are over the following groups $W'$:
\begin{enumerate}
\item In the first sum ($t\in W_g$), $W'$ is a parabolic subgroup of $W$ of rank $n-1$ such that $\langle W',t\rangle=W$.
\item In the second sum  ($t\notin W_g$), $W'$ is a rank-$n$ reflection subgroup of $W$ (including the case $W' = W$) such that $\langle W',t\rangle = W$ and $gt$ is parabolic quasi-Coxeter for $W'$. 
\end{enumerate}
Furthermore, the groups $W'$ in the first sum (those satisfying condition (1)) contain $gt$ as a parabolic quasi-Coxeter element.
\end{theorem}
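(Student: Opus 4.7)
The plan is to establish the recursion via a bijection between minimum-length full reflection factorizations of $g$ in $W$ and triples $\bigl(t,\,W',\,(s_1,\ldots,s_{k-1})\bigr)$ enumerated by the right-hand side. Given a minimum-length full factorization $t_1\cdots t_k = g$ of length $k = \ltr(g) = 2n - \lR(g)$ (using the characterization of parabolic quasi-Coxeter elements in Theorem~\ref{Prop: characterization of pqCox}(iv)), I would peel off the last reflection $t := t_k$, set $W' := \langle t_1,\ldots,t_{k-1}\rangle$, and observe that $t_1\cdots t_{k-1} = gt$ and $\langle W',t\rangle = W$. Applying Lemma~\ref{Lem: stat on (W_i,g_i)} to this prefix gives immediately that $gt$ is a parabolic quasi-Coxeter element of $W'$, that the prefix is a minimum-length full factorization of $gt$ in $W'$, and that $2\rank(W') - \lR(gt) = k-1$. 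This already proves the ``furthermore'' clause.

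The next step is to split into the two cases according to whether $t\in W_g$. By Lemma~\ref{Lem: g->gt}, $t\in W_g$ forces $\lR(gt) = \lR(g)-1$, which combined with the rank-length identity above gives $\rank(W') = n-1$; then Proposition~\ref{Prop: Taylor_K=<H,t>} (applied to $H = W'$ and $K = W$, which is trivially parabolic in itself) automatically upgrades $W'$ to a parabolic subgroup of $W$, matching condition~(1). Symmetrically, if $t\notin W_g$ then $\lR(gt) = \lR(g)+1$ and $\rank(W') = n$, matching condition~(2) (where parabolicity is not demanded, since a rank-$n$ parabolic of the rank-$n$ group $W$ could only be $W$ itself).

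For the reverse direction, given a triple satisfying the conditions, I would reassemble $(s_1,\ldots,s_{k-1},t)$ and check three things: the product is $g$ (immediate from $gt\cdot t = g$); the subgroup generated is $W$ (immediate from $\langle W',t\rangle = W$); and the length matches $\ltr(g)$. The last check uses Theorem~\ref{Prop: characterization of pqCox}(iv) applied to $gt$ in $W'$: one gets $\ltr[W'](gt) = 2\rank(W') - \lR(gt)$, and in both cases this evaluates to $k-1$, so appending $t$ produces a factorization of length $\ltr(g)$ as required. Since distinct triples clearly yield distinct factorizations of $g$, this completes the bijection and hence the identity~\eqref{cut-and-join-equation for real groups}.

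The main subtle point I anticipate is the Case~(2) bookkeeping: there is nothing a priori ruling out a rank-$n$ reflection subgroup $W' \leq W$ for which $gt$ is \emph{not} parabolic quasi-Coxeter, and such triples must be excluded by hand in the right-hand sum to keep the bijection tight. The asymmetry between the two cases---parabolicity of $W'$ is forced in Case~(1) but not in Case~(2), whereas the parabolic quasi-Coxeter condition on $gt$ holds automatically on the factorization side in both cases---is exactly what the hypotheses in conditions~(1) and~(2) are calibrated to capture, and verifying this calibration against Lemma~\ref{Lem: stat on (W_i,g_i)} and Proposition~\ref{Prop: Taylor_K=<H,t>} is where I would be most careful.
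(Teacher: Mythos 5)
Your overall strategy---peel off the last reflection $t_k$, set $W'=\langle t_1,\ldots,t_{k-1}\rangle$, invoke Lemma~\ref{Lem: stat on (W_i,g_i)} for the rank--length bookkeeping, and use Proposition~\ref{Prop: Taylor_K=<H,t>} to force parabolicity of $W'$ in Case~(1)---is exactly the paper's. (One small technical point: Lemma~\ref{Lem: stat on (W_i,g_i)} is stated for factorizations of the \emph{identity}, so before applying it you must first extend $t_1\cdots t_k=g$ to a length-$2n$ full factorization of $\id$ by appending a reduced factorization of $g^{-1}$; the paper does this explicitly.)

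There is, however, a genuine gap in your treatment of the ``furthermore'' clause, and it propagates into the reverse direction of your bijection in Case~(1). Applying Lemma~\ref{Lem: stat on (W_i,g_i)} to the prefix only shows that $gt$ is parabolic quasi-Coxeter in those subgroups $W'$ that actually arise as $\langle t_1,\ldots,t_{k-1}\rangle$ from some factorization. But the first sum ranges over \emph{all} rank-$(n-1)$ parabolic subgroups $W'$ with $gt\in W'$ and $\langle W',t\rangle=W$, and a priori some of these need not occur as such a prefix subgroup. For those $W'$ you have not established that $gt$ is parabolic quasi-Coxeter, yet your length check in the reverse direction relies on Theorem~\ref{Prop: characterization of pqCox}(iv) applied to $gt$ in $W'$ to conclude $\ltr[W'](gt)=2(n-1)-\lR(gt)=k-1$. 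If $gt$ failed to be parabolic quasi-Coxeter in some such $W'$, its minimum-length full $W'$-factorizations would be too long (by Corollary~\ref{Corol: lr(g)+ltr(g)>=2m} combined with Theorem~\ref{Prop: characterization of pqCox}) and appending $t$ would not produce a minimum-length full factorization of $g$, so the identity would overcount. The paper closes this with a separate argument that you are missing: for $t\in W_g$, Lemma~\ref{Lem: g->gt}(3) gives $gt\leq_{\RRR}g$, whence $gt$ is parabolic quasi-Coxeter for $W$ by Theorem~\ref{Prop: characterization of pqCox}[(i)$\Leftrightarrow$(iii)] and transitivity of $\leq_{\RRR}$, and therefore parabolic quasi-Coxeter for \emph{every} parabolic subgroup $W'$ containing it (by \cite[Cor.~3.5]{DLM2}). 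This is precisely why condition~(1) does not need to impose the parabolic quasi-Coxeter hypothesis by hand while condition~(2) does---the asymmetry you correctly flagged as the delicate point, but did not fully resolve.
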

\begin{proof}
Consider the last reflection $t_k$ that appears in some minimum-length full reflection factorization $g = t_1\cdots t_k$. Since $g$ is parabolic quasi-Coxeter, we have by Theorem~\ref{Prop: characterization of pqCox}[(i)$\Leftrightarrow$(iv)] that $k = \ltr(g) = 2n - \lR(g)$. This means in particular that we may extend this factorization of $g$ to a minimum-length full reflection factorization of the identity in $W$ by adjoining a reduced factorization of $g^{-1}$. This puts us in the context and assumptions of Lemma~\ref{Lem: stat on (W_i,g_i)}.

{\bf Assume first that $t_k\in W_g$} and let $W':=\langle t_1,\cdots, t_{k-1}\rangle$.  By Lemma~\ref{Lem: stat on (W_i,g_i)}(2) and the previous paragraph, we have that $t_1\cdots t_{k-1}=gt_k$ is a minimum-length full $W'$-reflection factorization of $gt_k$.  We must check that $W'$ satisfies condition (1). To begin with, by definition of $W'$ we have that $\langle W', t_k\rangle = \langle t_1, \ldots, t_k \rangle = W$. Now, Lemma~\ref{Lem: g->gt}(3) states that $\lR(gt_k)=\lR(g)-1$ and hence by Lemma~\ref{Lem: stat on (W_i,g_i)}(1) we also know that $\rank(W')=n-1$. Then Proposition~\ref{Prop: Taylor_K=<H,t>} implies that $W'$ is parabolic since $W=\langle W',t_k\rangle$ is itself parabolic and of greater rank.

We will show now that if $t\in W_g$, then any group $ W'\leq W$ that satisfies condition (1) with $gt \in W'$ contains $gt$ as a parabolic quasi-Coxeter element. Indeed, by Lemma~\ref{Lem: g->gt}(3) we have that $gt\leq_{\RRR} g$, which means after Theorem~\ref{Prop: characterization of pqCox}[(i)$\Leftrightarrow$(iii)] (and since $\leq_{\RRR}$ is transitive) that $gt$ is a parabolic quasi-Coxeter element for $W$. This in turn means that $gt$ is a parabolic quasi-Coxeter element for any parabolic subgroup $W'\leq W$ that contains it \cite[Cor.~3.5]{DLM2}.

To see that all contributions of the first sum are valid, start with any reflection $t\in W_g$. Consider now any subgroup $W'\leq W$ that satisfies condition (1) with $gt \in W'$, and any minimum-length full $W'$-reflection factorization $t_1\cdots t_r=gt$. From the previous paragraph we have that $gt$ is parabolic quasi-Coxeter in $W'$. Thus since $\rank(W')=n-1$  we have by Theorem~\ref{Prop: characterization of pqCox}[(i)$\Leftrightarrow$(iv)] that $r=2(n-1)-\lR[W'](gt)=2(n-1)-\lR(gt) = 2n - \lR(g) - 1$. Then $t_1\cdots t_r\cdot t$ is a reflection factorization of $g$, of length $2n-\lR(g)$, whose terms generate $W$. That is, it is a minimum-length full reflection factorization of $g$ and thus a valid contribution for $\Ftr_W(g)$.

{\bf Assume now that $t_k\notin W_g$} and let $W':=\langle t_1,\cdots,t_{k-1}\rangle$.  We have by the same lemmata as above that $t_1\cdots t_{k-1}=gt_k$ is a minimum-length full $W'$-reflection factorization, that $\lR(gt_k)=\lR(g)+1$, and hence that $\rank(W')=n$. By Lemma~\ref{Lem: stat on (W_i,g_i)}(3) and the discussion in the first paragraph of the proof, we have that $gt_k$ is parabolic quasi-Coxeter in $W'$.

To see that all contributions of the second sum are valid, start with a reflection $t\notin W_g$ and any reflection subgroup $W'\leq W$ satisfying condition (2) with $gt\in W'$. Since $\rank(W')=n$ and $gt$ is assumed parabolic quasi-Coxeter in $W'$, we have that the full reflection length of $gt$ \emph{with respect to the group} $W'$ is equal to $r=2n-\lR[W'](g)-1=2n-\lR(g)-1=\ltr(g)-1$. Since $\langle W',t\rangle =W$, any minimum-length full $W'$-factorization $t_1\cdots t_r=gt$ gives us a minimum-length full $W$-factorization $t_1\cdots t_r \cdot t=g$. This completes the argument.
\end{proof}

\subsection{The exceptional groups}
\label{sec: proof exceptional}

In this section, we describe the computer-assisted part of the proof of Theorems~\ref{Thm: Weyl case} and~\ref{thm:main} for the exceptional reflection groups. We will use two different approaches: the first is valid in principle for all complex types but is more computationally demanding, and in fact fails for the group $E_8$ due to lack of computing power; the second works only for Weyl types, but is faster, more conceptual, and does in fact succeed to prove the case of $E_8$.

\subsubsection*{Exceptional reflection groups apart from $E_8$}

For the exceptional complex reflection groups, we prove Theorem~\ref{volume theorem} by a computer calculation in SageMath and CHEVIE \cite{chevie,sagemath} that we describe now.  For each exceptional group $W$, we produce a set of conjugacy class representatives of parabolic quasi-Coxeter elements using the characterization in Theorem~\ref{Prop: characterization of pqCox}[(i)$\Leftrightarrow$(ii)], namely, for each conjugacy class, we fix a representative $g$ and one of its reduced reflection factorizations and exhaustively check whether combining it with any $(n - \lR(g))$-element subset of $\RRR$ generates $W$.  For each parabolic quasi-Coxeter representative $g$, we determine $\Fred(g)$ and $\Ftr_W(g)$ as described in \cite[\S3]{DLM1}, first computing the power series 
\[
\FFF_W(g; z) := \sum_{N\geq 0}\#\left\{(t_1,\ldots,t_N)\in\RRR^N: t_1\cdots t_N=g\right\}\cdot \frac{z^N}{N!}
\]
that counts \emph{all} reflection factorizations of $g$ by length using the character table of $W$ and a formula for factorization-counting due to Frobenius (see \cite[Eq.~(3.1)]{DLM1}), and then computing the power series
\begin{equation}
\FFFtr_W(g; z) := \sum_{N\geq 0}\#\left\{(t_1,\ldots,t_N)\in\RRR^N : t_1\cdots t_N=g\text{ and }\langle t_1,\ldots, t_N\rangle=W \right\}\cdot \frac{z^N}{N!}\label{Eq: defn Ftr_W(g;t)}
\end{equation}
that counts full factorizations of arbitrary genus via M\"obius inversion on the lattice of reflection subgroups of $W$.  The number $\Ftr_W(g)$ is the first nonzero coefficient of this series.  The software systems provide us further with the matrix representation of each group element  and from this we can produce the eigenspaces of reflections and thus construct the (complex) root systems. Then the right side of Theorem~\ref{thm:main} can be computed after enumerating all relative generating sets, and we can compare it with $\Ftr_W(g)$.  

The result of the computation just described was to verify Theorem~\ref{thm:main} for all exceptional well generated complex reflection groups, including all Weyl groups, \emph{with the exception of $E_8$}.  The reason we omit $E_8$ is that the lattice of reflection subgroups of $E_8$ is too large to implement with our available computer power, and consequently we cannot compute $\FFFtr_{E_8}(g)$ this way. However, $E_8$ is addressed in the next discussion, which covers all exceptional Weyl groups.

\subsubsection*{Exceptional Weyl groups, including $E_8$} 

For the exceptional Weyl groups, we prove the main theorem (Theorem~\ref{Thm: Weyl case}) using Theorem~\ref{thm:cut and join} and a simultaneous induction on the quantities $\rank(W)$ and $\rank(W)-\lR(g)$.  Indeed, for all pairs $(gt, W')$ that appear on the right side of \eqref{cut-and-join-equation for real groups}, the elements $gt$ are parabolic quasi-Coxeter in $W'$ and we have that either $\rank(W') = \rank(W) - 1$ (in the first sum) or $\rank(W') = \rank(W)$ and
$
\rank(W')-\lR[W'](gt)= \rank(W)-\lR(g) - 1
$
(in the second sum).  The base cases (when $\rank(W) = \lR(g)$) are when $g$ is quasi-Coxeter for $W$; in this case, $W_g = W$, $\RGS(W, g) = \{ \varnothing\}$, the generalized cycle decomposition is the trivial factorization $g_1 = g$, and $\ltr(g) = \lR(g)$.  Thus in this case Equation~\eqref{Eq: weyl thm} reduces to the assertion $\Ftr_W(g) = \Fred(g)$ for quasi-Coxeter $g$, and this is precisely the content of \cite[Thm.~1.2]{BGRW}.  

To complete the inductive proof, it is sufficient (applying Theorem~\ref{thm:cut and join}, using Remark~\ref{Rem: fred(g_i)->fred(g)}, and cancelling common factors) to prove the following recurrence on the cardinalities of relative generating sets $\RGS(W,g)$:
\begin{multline}
\label{eq:LHS cut and join Weyl groups}
\ltr(g)\cdot \Fred(g) \cdot\#\RGS(W,g)\cdot \dfrac{I(W_g)}{I(W)}=\\
=\lR(g)\cdot \Biggl(\, \sum_{t\in W_g}\sum_{\substack{gt \in W'\leq W, \\ \text{ condition (1)}}}\Fred[W'](gt)\cdot \#\RGS(W',gt)\cdot \dfrac{I(W'_{gt})}{I(W')}\Biggl)+ {}\\
{}+\dfrac{1}{\lR(g)+1}\Biggl(\,\sum_{t\notin W_g}\sum_{\substack{gt \in W'\leq W, \\ \text{condition (2)}}}\Fred[W'](gt)\cdot \#\RGS(W',gt)\cdot\dfrac{I(W'_{gt})}{I(W')}\Biggl),
\end{multline}
where the conditions on which groups $W'$ participate in the summation are as in Theorem~\ref{thm:cut and join}.

The advantage of this expression is that it rids us of the burden of calculating the numbers $\Ftr_W(g)$ using the approach of \cite[\S3]{DLM1} (described in detail in the previous subsection). In particular, rather than a complete calculation of the poset of reflection subgroups of $W$ that contain $g$ (and of its M\"obius function), we need only have a fast way of computing the quantities $\#\RGS(W,g)$. To count such relative generating sets we test all $(\op{rank}(W)-\lR(g))$-element sets of reflections of $W$; by Proposition~\ref{Prop: prod_t_i is q-Cox}, such a set belongs to the collection $\RGS(W,g)$ if and only if the product of all its elements and $g$, in any order, is a quasi-Coxeter element of $W$.  We use a similar approach to generate all reflection subgroups $W'\leq W$ that contain $g$.

Following this approach we have verified, using SageMath and CHEVIE \cite{chevie,sagemath}, the equality \eqref{eq:LHS cut and join Weyl groups} for all parabolic quasi-Coxeter elements in all exceptional Weyl groups. For $E_8$, this calculation was run on the RCF computing cluster of the Department of Mathematics and Statistics of UMass Amherst, where it took multiple days.

\section{Further comments}
\label{sec: further}

We end with some final remarks on our main theorems and their proofs, including conjectures, open questions (Questions~\ref{Quest_gen_of_thm}, \ref{question: postivity phi Weyl groups}, and \ref{Quest:overcounting_braids}), and potential connections to other mathematical objects (including the cut-and-join equation, a factorization poset, root systems for complex reflection groups, weighted Hurwitz numbers, the braid group action on factorizations, the ELSV formula, and root systems for complex reflection groups).

\subsection{Some remarks on the proof} \label{Sec: GD statistic and RGS for Gmpn}

In some sense, the combinatorial families $G(m,m,n)$ and $G(m,1,n)$ appear close to the Weyl group case in the context of Theorems~\ref{thm:main} and~\ref{Thm: Weyl case}. Even though their (relative) generating sets do not always carry the same Grammian statistic, there are \emph{equal numbers of sets for each value of the statistic}. In particular, comparing formulas \eqref{eq: rgs multiplicity case 1(a)}, \eqref{eq: rgs multiplicity case 1(b)}, \eqref{eq: rgs multiplicity case 2(a)}, and \eqref{eq:case 2(b) sum} and keeping the notation for the partition $\lambda$ associated with $W_g$, we can write
\[
\sum_{\ttt\in\RGS(W,g)}\dfrac{\GD(\bm\rho_g)}{\GD(\bm\rho_{\ttt}\cup\bm\rho_g)}=\#\RGS(W,g)\cdot \left(\prod_{i=1}^k\lambda_i\right)\cdot\mathfrak{c}_g,
\]
where the number $\mathfrak{c}_g$ is $1$, $1/2$, $1$, or $\psi(m)/12$ in the four cases 1(a), 1(b), 2(a), 2(b), respectively, and $\psi(m)$ is the Dedekind function of Remark~\ref{Rem: Dedekind}. This is not the case for general groups, as we can see already for $H_3$ (in Example~\ref{Ex: H_3 calc}) and $H_4$ (in Remark~\ref{Rem: RGS is not a factor} below).

\begin{remark}\label{Rem: RGS is not a factor}
In general, different (relative) generating sets might have different Grammian statistics and the families of sets with a given statistic might have different sizes. That is, the statement of Theorem~\ref{thm:main} is the best one can hope for, and both the Weyl case of Theorem~\ref{Thm: Weyl case} and the nice behavior of the combinatorial families are special phenomena. The numerology of the group $H_4$ makes this fully apparent; there, the number of generating sets and the Grammian statistic are given by
\[
\#\RGS(H_4,\id)=355800=2^3\cdot 3\cdot 5^2\cdot 593\qquad \text{and}\qquad \sum_{\ttt\in\RGS(H_4,\id)}\dfrac{1}{\GD(\bm\rho_{\ttt})}=594300=2^2 \cdot 3 \cdot 5^2 \cdot 7\cdot 283.
\]
There seems to be no chance that the cardinality $\#\RGS(W,g)$ may appear as a factor in a reasonable reformulation of our main Theorem~\ref{thm:main}.
\end{remark}

\subsection{Remarks on the cut-and-join recurrence}
\label{sec:poset}

In Section~\ref{Sec: cut-and-join counting}, we used a cut-and-join recurrence as part of the proof of the main theorem. The usual formulation \cite[Lem.~2.2]{GJ97} of the cut-and-join equations for genus-$0$ Hurwitz numbers $H_0(\lambda)$ in the symmetric group yields a system of partial differential equations with a unique solution.  Having a generating series that ranges over all symmetric groups $\Symm_n$ allowed Goulden--Jackson to use Lagrange inversion to produce that unique solution.  In order to do something similar in our setting, it would be necessary to have separate variables keeping track of each type of generalized cycle (as in \S\ref{sec:parabolic}). This is possible but it appears unlikely it would be fruitful (because, for instance, there is no easy way to place all complex reflection groups in a single generating function). 

We mention another perspective on our generalization of the cut-and-join recurrence. In the case that $g = c$ is a Coxeter element, the reduced reflection factorizations of $c$ (which are also its minimum-length full reflection factorizations) are in natural correspondence with the maximal chains in the \emph{lattice of $W$-noncrossing partitions $NC(W)$} (see \cite{armstrong}).  Lemma~\ref{Lem: stat on (W_i,g_i)} suggests an analogous construction: consider the poset whose elements are the pairs $(g_i, W_i)$ that appear in Lemma~\ref{Lem: stat on (W_i,g_i)} with the natural order relation that $(g_1, W_1) \leq (g_2, W_2)$ if there is a tuple $(t_1, \ldots, t_k)$ of $k:=\ltr[W_2](g_2)-\ltr[W_1](g_1)$ reflections such that $g_2 = g_1 \cdot t_1 \cdots t_k$ and $W_2 = \langle W_1, t_1, \ldots, t_k\rangle$.  Then its maximal chains correspond precisely to the minimum-length full reflection factorizations of the identity; see Figure~\ref{fig: poset factorizations B_2} for the case $W=B_2$.  This poset seems like an interesting object in its own right, and worthy of further study.

\begin{figure}[ht]
    \includegraphics[scale=0.8]{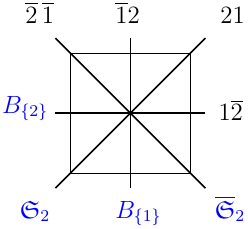}
    \hspace{-50pt}
    \qquad \qquad  \quad
    \includegraphics[scale=0.6]{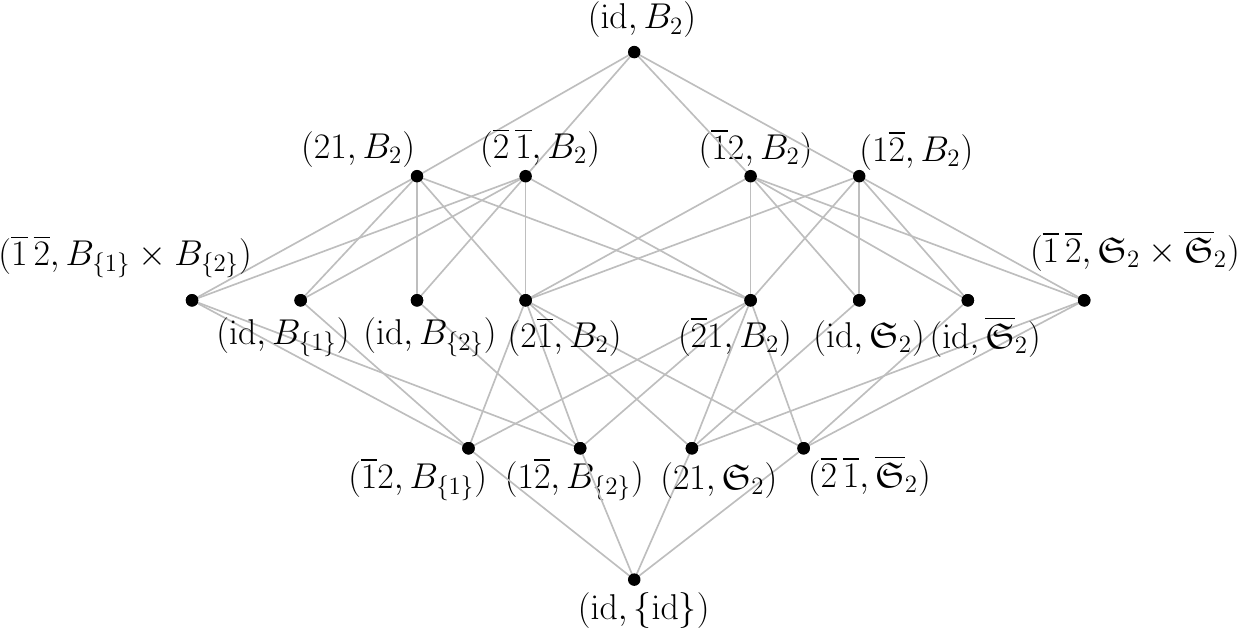}
    \caption{At left, the hyperplane arrangement of $B_2$ labeled by reflections (as signed permutations in one-line notation; in black) and the subgroups they generate (in blue). At right, the poset of pairs $(g_i,W_i)$ as in Section~\ref{sec:poset}, whose maximal chains correspond to minimum-length full reflection factorizations of the identity in $B_2$.  The longest element $-1 = w_0 = \bar{1}\bar{2}$ (which is \emph{not} parabolic quasi-Coxeter in $B_2$) and the two Coxeter elements $\bar{2}1$ and $2\bar{1}$ appear in the middle rank.}
    \label{fig: poset factorizations B_2}
\end{figure}

\subsection{Uniform proofs and generalizations}
\label{sec:uniform}

The proof of our main theorems relies heavily on the classifications of Weyl groups and complex reflection groups.  It would be desirable to give a uniform proof.  Is it possible to use the cut-and-join recurrence (Theorem~\ref{thm:cut and join}) to give a uniform proof of Theorem~\ref{Thm: Weyl case} (the main theorem for Weyl groups)?  
A good test-case for this idea would be to find a new proof of Theorem~\ref{thm:S_n genus 0} using \eqref{eq:LHS cut and join Weyl groups} that avoids Lagrange inversion (as in Goulden and Jackson \cite{GJ97}) and Strehl's complicated series summation formulas \cite{strehl}.

On the other hand, our main Theorem~\ref{thm:main} addresses only well generated groups and parabolic quasi-Coxeter elements. Its statement does not hold more generally because the collection of relative generating sets will be empty outside that setting (Proposition~\ref{Prop: characterization of pqCox}). Still, one can enumerate full reflection factorizations for any element in any reflection group, so we may ask the following question.

\begin{question}\label{Quest_gen_of_thm}
Is there a generalization of Theorem~\ref{thm:main} that holds for non-parabolic quasi-Coxeter elements as well, or (even more) that holds for all elements in all complex reflection groups?
\end{question}

\subsection{Higher genus Hurwitz numbers in Weyl groups}

For a Weyl group $W$, Weyl's formula for the cardinality of $W$ states that 
\[
\#W=n!\cdot\left(\prod_{i=1}^n c_i\right)\cdot I(W),
\]
where $n$ is the rank of $W$ and the $c_i$ are the coefficients in the expansion of the \emph{highest root} of $W$ in terms of the simple roots (see  \cite[\S11-6]{Kane}). We can use this formula to rewrite Theorem~\ref{Thm: Weyl case}; for example, for the identity element $\id\in W$, it implies that
\[
\Ftr_W(\id)= \frac{1}{\#W} \cdot \#\RGS(W)\cdot n!\cdot\left(\prod_{i=1}^n c_i\right) \cdot \ltr(\id)!.
\]
In \cite[Prop.~3.1]{DLM1}, we showed that the exponential generating function $\FFFtr_W(g; z)$ in \eqref{Eq: defn Ftr_W(g;t)} that counts full factorizations of $g$ by their length is given by
\begin{equation} \label{rel F g.f. and Phi}
\FFFtr_W(g; \log X)=\frac{1}{\#W} \cdot \Phi_{W}(g;X) \cdot (X-1)^{\ltr(g)} \cdot \frac{1}{X^{\#\AAA_W}}
\end{equation}
for some monic polynomial $\Phi_{W}(g;X)$ in $X$ of degree $h\cdot n-\ltr(g)$, where $h$ and $\AAA_W$ are respectively the Coxeter number and reflection arrangement of $W$.  A direct comparison implies that, for a Weyl group $W$, the polynomial $\Phi_{W}(\id; X)$ satisfies
\begin{equation}
\Phi_{W}(\id;1)=\#\RGS(W)\cdot n!\cdot \prod_{i=1}^n c_i .
\end{equation}

In our computational experiments we have observed for Weyl groups $W$ that the polynomials $\Phi_{W}(\id;X)$ seem to have positive integer coefficients in $X$. This suggests the following question.

\begin{question} \label{question: postivity phi Weyl groups}
For a Weyl group $W$, is the polynomial $\Phi_{W}(\id;X)$ in $\NN[X]$? If so, is it  possible to realize it as a generating function for some combinatorial statistic? 
\end{question}

In the case of the symmetric group $\Symm_n$, the first author, Chapuy, and Louf have proven (unpublished) that $\Phi_{\Symm_n}(\id; X)$ is the $h$-polynomial of a transportation polytope whose vertices are indexed by certain labeled trees.  However, we do not have any conjectures for similar interpretations in other Weyl groups. 

We have checked that $\Phi_W(\id;X)$ has positive coefficients for the exceptional Weyl groups $W$. For the remaining infinite families, it is at least easy to relate the polynomials $\Phi_{B_n}(\id; X)$ and $\Phi_{D_n}(\id; X)$ with $\Phi_{\Symm_n}(\id; X)$. The equations below follow, after some technical calculations, from \cite[Thm.~4.1]{DLM1}:
 \begin{align} \label{eq:Phi_id_type_B}
        \Phi_{B_n}(\id;X) &=\Phi_{\Symm_{n}}(\id;X^2)\cdot (X+1)^{2n-2}\cdot (1+X+\cdots + X^{n-1})^2,\\
    \Phi_{D_n}(\id;X) &= \frac{1}{(X-1)^2}\left( (X+1)^{2n-2}\Phi_{\Symm_{n}}(\id;X^2) - 2^{2n-2} X^{\binom{n}{2}}\Phi_{\Symm_n}(\id;X)\right).
\end{align}
From \eqref{eq:Phi_id_type_B} one can easily deduce the positivity of $\Phi_{B_n}(\id;X)$, while for $\Phi_{D_n}(\id;X)$ we checked it with a computer for $n\leq 20$.  For non-Weyl types however, the positivity is not true (already for the cyclic group $W=G(6,1,1)$, see \cite[Rem.~3.3]{DLM1}). 

\begin{remark}\label{rem: sum of exponentials and asymptotics}
For a Weyl group $W$ and an element $g \in W$, by \eqref{rel F g.f. and Phi} and since $\Phi_W(g; X)$ is a polynomial in $X$,  the number \[a_W(g;N) := \left[\frac{z^N}{N!}\right] \FFFtr_W(g;z)\] of full reflection factorizations of $g$ of length $N$  is of the form $a_W(g;N)=(\#W)^{-1}\sum_{i \in I} \kappa_i \cdot i^N$ for a finite set $I$ of integers and coefficients $\kappa_i := [X^{i+\#\AAA_W}] \,\Phi_W(g;X) \cdot (X-1)^{\ltr(g)}$.  In particular, the ordinary generating function $\sum_{N \geq 0} a_W(g;N) z^N$ is rational. Asymptotically, the growth of $a_W(g; N)$ is controlled by the largest elements of $I$ (in magnitude); in the Frobenius formula, by \cite[Prop.~3.2]{D2}, this dominant contribution comes from the trivial and sign representations. In particular, as $N \to \infty$ with fixed parity according to $\lR(g)$, we have the asymptotic formula (cf. \cite[Eq.~(3)]{DYZ})
\[
a_W(g;N) \sim \frac{2}{\# W} (\#\RRR)^N.
\]
We warn the reader that this asymptotic formula must be adjusted in non-Weyl types. In particular, it remains true when $W$ is a complex reflection group generated by order-$2$ reflections, but the factor $2/\#W$ changes to $1/\#W$ in the remaining cases (where also there is no parity restriction on $N$).
\end{remark}

\subsection{On the roles of the factors \texorpdfstring{$\Fred(g)$}{FredW(g)} and \texorpdfstring{$\# \RGS(W,g)$}{\# RGS(W, g)}} 
\label{sec:over-counting}

If we rewrite the formula of Theorem~\ref{Thm: Weyl case} as in Remark~\ref{Rem: fred(g_i)->fred(g)}, replacing all the terms $\Fred(g_i)$ by the single term $\Fred(g)$ and writing $k=\lR(g)$ and $n=\rank(W)$, we get the count
\begin{align}
\Ftr_W(g) &= \frac{\ltr(g)!}{\lR(g)!} \cdot \Fred(g) \cdot \#\RGS(W,g)\cdot \frac{I(W_g)}{I(W)}\label{Eq: Weyl better thm}\\
&=\dfrac{(2n-k)!}{n!}\cdot\Bigg[\binom{n}{k}\cdot\Fred(g)\cdot (n-k)!\cdot\#\RGS(W,g)\Bigg]\cdot\dfrac{I(W_g)}{I(W)},\nonumber
\end{align}
where we have used from Theorem~\ref{Prop: characterization of pqCox} that $\ltr(g)=2n-k$ and rearranged some factorials. An immediate observation is that the quantity inside the brackets divides the number of full factorizations $\Ftr_W(g)$ in almost all cases.\footnote{One exception is $W=D_5$ with a parabolic Coxeter element $g$ 
of type $A_4$, where the denominator $I(W) = 4$ is not canceled by the remaining factors.} There is a  natural collection of full factorizations that is enumerated by precisely the bracketed number: it consists of all possible shuffles between a reduced reflection factorization $(t_1,\ldots,t_k)$ of $g$ and a factorization of the form 
\[
\id=t_{k+1}t_{k+1}\cdot t_{k+2}t_{k+2}\cdots t_nt_n,
\]
where $\{t_{k+1},\ldots t_n\}\in\RGS(W,g)$ and where we are not allowed to break the pairs $t_{k+j}t_{k+j}$ in the shuffle. Let us call such full factorizations \defn{simply shuffled}.

These factorizations played an important role in \cite{DLM2} in the proof of Theorem~\ref{Prop: characterization of pqCox}(iv), which characterizes parabolic quasi-Coxeter elements in terms of their full reflection length. They are also essentially the types of factorizations that appear in \cite[Cor.~1.4]{LR}, which tells us that every minimum-length full reflection factorization of $g$ is Hurwitz-equivalent to a simply shuffled one. As it happens, all minimum-length full factorizations of a fixed parabolic quasi-Coxeter element are Hurwitz-equivalent \cite[Prop.~4.1]{DL}, so that we may hope for an explanation of Theorem~\ref{Thm: Weyl case} along the following lines.

\begin{question}\label{Quest:overcounting_braids}
Using the notation of the previous discussion, is there a collection of $I(W_g)\cdot (2n-k)!/n!$-many elements in the braid group $\BBB_{2n-k}$ that produce, via their Hurwitz action on the simply shuffled factorizations, \emph{all} minimum-length full reflection factorizations of $g$, each of them $I(W)$-many times?
\end{question}

It is not unreasonable to expect $I(W)$ to be an overcounting factor: for example, in \cite[\S5]{alcovedII}, the authors construct a finite subgroup $C\leq W$ with cardinality equal to the connection index, and it is not difficult to see that this subgroup acts freely on ordered good generating sets.

One approach towards Question~\ref{Quest:overcounting_braids} could be the recent work \cite{WY2} of Wegener and Yahiatene. In \cite[Rem.~4.2]{WY2} they give an explicit algorithm that can transform any full factorization to a simply shuffled one. Perhaps by studying the fiber of the Wegener--Yahiatene construction one could explain the numerological properties discussed above.

\begin{remark}
Alternatively, it seems that $\Fred(g)$ divides $\Ftr_W(g)$ for all parabolic quasi-Coxeter elements $g$ in every Weyl group $W$.  Can this be explained directly?
\end{remark}

Finally, we remark that trying to replace $\Fred(g)$ with $\#\RGS(W,g)$ as the main object in a combinatorial proof seems less promising. In particular, the number of subsets of the terms in a minimum-length full reflection factorization of $g$ that belong to $\RGS(W,g)$ is not constant, already in $\Symm_n$ and for the identity $g= \id$.  
For example, take the two factorizations 
\[
(12)(12)(23)(23)=\id\qquad\text{ and }\qquad (12)(23)(13)(23)=\id.
\] 
The first has four two-element subsets that generate the whole group $\Symm_3$ and two that don't, while the second one has five subsets that generate $\Symm_3$ and only one that doesn't.  This makes it more complicated to find some sort of overcounting argument to explain why the number of generating sets appears in the formula.  Is there one even just for $\Symm_n$?

\subsection{Possible geometry behind \texorpdfstring{$W$}{W}-Hurwitz numbers}
\label{sec:geometry}

In the symmetric group, the genus-$g$ Hurwitz numbers $H_g(\lambda)$, where $\lambda=(\lambda_1,\ldots,\lambda_r)$ can be written as 
\[
H_g(\lambda) \,=\, (n+r+2g-2)!\cdot 
P_{g,r}(\lambda) \cdot  \prod_{i=1}^r \frac{\lambda_i^{\lambda_i}}{(\lambda_i-1)!},
\]
where $P_{g,r}(\lambda)$ is a polynomial in $\lambda_1,\ldots,\lambda_r$ \cite[Eq.~(1.1)]{GJVain}. For instance, in the case $g = 0$, one has $P_{0,r}(\lambda)=(\lambda_1+\cdots + \lambda_r)^{r-3}$, recovering \eqref{EQ: Hurwitz formula}. This polynomiality was conjectured by Goulden, Jackson and Vakil \cite{GJV01} and proved by Ekedahl, Lando, Shapiro and Vainshtein \cite{ELSV}, who found an expression, called the \defn{ELSV formula}, for the polynomial $P_{g, r}$ as a {\em Hodge integral}:
\begin{equation} \label{eq:ELSV}
P_{g,r}(\lambda) =\int_{\overline{\mathcal{M}}_{g,r}} \frac{C(E^*)}{(1-\lambda_1 \psi_1)\cdots (1-\lambda_r \psi_r)} \in \QQ[\lambda_1,\ldots,\lambda_r],
\end{equation}
where $\overline{\mathcal{M}}_{g,r}$ is the compact moduli space of stable $r$-pointed genus $g$ curves and $C(E^*)$, $\psi_i$ are certain Chern classes (see \cite[\S5.3]{LandoZvonkin} or \cite{lando_survey} for details). 

It would be of interest to find an analogue of the ELSV formula for full factorizations in complex reflection groups; indeed, this was the motivation of the work of Pollack--Ross in \cite{PR}. A full generalization is currently out of reach because there is no known analogue for the moduli space $\overline{\mathcal{M}}_{g,r}$ for other reflection groups, nor is it clear what the functions on such a moduli space should look like.

However, a part of this story does extend. On the sphere (the genus-$0$ case), a holomorphic function of degree $r$ with a single pole of order $r$ is just a complex polynomial of degree $r$. This is also the general deformation of the type-$A_{r-1}$ simple singularity $z^r=0$. One of the important ingredients of the ELSV formula is that the \emph{Lyashko--Looijenga (LL) morphism} (see \cite{looijenga,Bessis_Annals,DLM2}) extends algebraically to a cone over $\overline{\mathcal{M}}_{g,r}$ and that it is compatible  with the $\CC^\times$ action on the fibers of the cone (i.e., it is {\em weighted-homogeneous}); see \cite[\S4.6]{lando_survey} for details. Originally, the LL map was defined for all simple singularities (types $A$, $D$, $E$) and it is in all cases weighted-homogeneous on their semiuniversal deformation spaces. The monodromies of these simple singularities naturally correspond to the Coxeter elements of the corresponding reflection groups and the degree of the corresponding LL map counts minimum-length reflection factorizations of a Coxeter element (see \cite{looijenga, Bessis_Annals, D1}). 

We also note that in the genus-$0$ case, the space $\overline{\mathcal{M}}_{0,r}$  is isomorphic to the {\em wonderful compactification} of the braid arrangement \cite[\S4.3]{dCP_wonder}. The other reflection groups also have  wonderful compactifications \cite{dCP_holon}. It would be interesting to define a branching map on such compactifications, maybe building on the geometric interpretation of \cite{AFV_gaudin}.

\subsection{Root systems for complex  reflection groups}
\label{sec: michel root systems}

A central object in our Theorem~\ref{thm:main} is a statistic for each (relative) generating set: the Grammian determinant of the corresponding set of roots. For a Weyl group $W$ and a good generating set of $W$, the Grammian determinant agrees with the connection index $I(W)$ (see Section~\ref{sec: equivalence of main thms for Weyl groups}). In \cite{BCM}, Broue--Corran--Michel developed a theory of (co)root systems for arbitrary complex reflection groups, including an analogue of a connection index. Given a well generated group $W$, their object is the ideal that is generated by the determinant of a Cartan matrix associated to $W$ \cite[Def.~3.47, Prop.~6.7]{BCM}.  This determinant  is precisely the Grammian determinant of the generating set of reflections of $W$ that gives the Cartan matrix. Different generating sets would give different Grammian determinants, and this is important for Theorem~\ref{thm:main}, but they all determine the same ideal in the setting of \cite{BCM}.  Thus in some sense, the statistic we study is not just a main ingredient of our Theorem~\ref{thm:main}, but is a meaningful representation-theoretic object on its own. Can this perspective help towards a more uniform understanding of Theorem~\ref{thm:main}?

\subsection{Relation to weighted Hurwitz numbers}

In Sections~\ref{sec: proof Gm1n} and \ref{sec: proof Gmmn}, as part of the proof of the main theorem for the infinite families, we used formulas \eqref{eq:case 1(a) LHS}, \eqref{eq:case 1(b) LHS}, \eqref{eq:case 2(a) LHS}, \eqref{eq:case 2(b) LHS} that might be viewed as weighted versions of the Hurwitz numbers of $\Symm_n$. Putting weights on Hurwitz numbers of $\Symm_n$ is a rich and active area of study. For example, Harnad and collaborators \cite{G-PH_WHN,A_C_E_H,A_C_E_H2,bonzom2021b} have studied different {\em weighted Hurwitz numbers} determined by a weight generating function.  This approach unifies {\em double Hurwitz numbers} \cite{okounkov_toda} and {\em monotone Hurwitz numbers} \cite{G_G-P_N}, among other variations---see \cite{HarWHN} for a survey. It would be interesting to see if the weighted Hurwitz numbers in our proofs are related.

\section*{Acknowledgements}

We thank Guillaume Chapuy, David Jackson, Jean Michel, Gilles Schaeffer, Ravi Vakil, and Jiayuan Wang for helpful comments and suggestions.
This work was facilitated by computer experiments using Sage \cite{sagemath}, its algebraic combinatorics features developed by the Sage-Combinat community \cite{Sage-Combinat}, and CHEVIE \cite{chevie}. We thank the staff of the RCF computing facility at the Math and Stats Department at UMass Amherst and especially Rachel Aronow for their support using the RCF cluster. 

An extended abstract of this work appeared as \cite{DLM-extended-abstract}.

\appendix

\section{Proofs of identities involving roots of unity}
\label{appendix}

We begin with the proof of the identity used in the proof of  Theorem~\ref{thm:main} for $G(m,1,n)$.

\begin{proposition}
\label{prop. primitive root identity 1}
For a nonnegative integer $m>1$ we have that 
\begin{equation}
\label{eq: primitive root identity 1}
\sum_{\xi \text{ prim.}} \frac{1}{1-\xi} = \frac{\varphi(m)}{2},
\end{equation}
where the sum on the left side is over primitive $m$th roots of unity.
\end{proposition}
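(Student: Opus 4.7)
The plan is to exploit the action of complex conjugation on the set of primitive $m$th roots of unity. If $\xi$ is a primitive $m$th root of unity, then so is $\xi^{-1}=\bar\xi$. The only real primitive roots of unity are $\pm 1$, occurring for $m = 1, 2$, so for $m > 2$ the involution $\xi \mapsto \bar\xi$ is fixed-point-free on the set of primitive $m$th roots of unity and partitions that set into $\varphi(m)/2$ conjugate pairs.

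For each such pair, one computes, using $\xi\bar\xi = 1$,
\[
\frac{1}{1-\xi} + \frac{1}{1-\bar\xi} \;=\; \frac{2-\xi-\bar\xi}{(1-\xi)(1-\bar\xi)} \;=\; \frac{2-\xi-\bar\xi}{2-\xi-\bar\xi} \;=\; 1.
\]
Summing this identity over the $\varphi(m)/2$ conjugate pairs immediately yields \eqref{eq: primitive root identity 1}. The remaining case $m = 2$ is checked directly: the only primitive $m$th root of unity is $-1$, and $1/(1-(-1)) = 1/2 = \varphi(2)/2$.

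There is essentially no obstacle: the proof reduces to a one-line pairing argument, with the only bookkeeping being the separate treatment of $m=2$ (the sole case where a ``pair'' degenerates to a singleton). I expect the same conjugation trick to do most of the work for the companion identity (Proposition~\ref{prop: primitive root identity 2}) used in the $G(m,m,n)$ case, although that sum will additionally require a short number-theoretic computation to land on the Möbius-type expression $\frac{1}{12}\sum_{r\mid m}\mu(m/r)r^2$.
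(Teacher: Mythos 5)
Your proof is correct. The pairing identity $\frac{1}{1-\xi}+\frac{1}{1-\bar\xi}=\frac{2-\xi-\bar\xi}{(1-\xi)(1-\bar\xi)}=1$ (using $\xi\bar\xi=1$) is right, the involution $\xi\mapsto\bar\xi$ is indeed fixed-point-free on primitive $m$th roots for $m>2$ since the only real roots of unity are $\pm1$, and you handle $m=2$ separately. The route is genuinely different from the paper's: the paper first applies M\"obius inversion to reduce to the sum over \emph{all} nontrivial $m$th roots of unity, evaluates that as the logarithmic derivative $f'(1)/f(1)$ of $f(x)=x^{m-1}+\cdots+1$, and invokes palindromy of $f$ to conclude the ratio is half the degree (a remark in the paper notes one can instead use the cyclotomic polynomial $\Phi_m$ directly). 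Your argument works directly on the primitive roots with no M\"obius inversion and no polynomial machinery, which is more elementary; the paper's version is the ``generating-function form'' of the same phenomenon, since palindromy forces roots to pair as $r\leftrightarrow 1/r$, which for roots of unity is exactly complex conjugation. One caveat on your closing aside: the companion sum $\sum_{\xi}\frac{1}{2-\xi-\bar\xi}$ has summands that are already real and invariant under $\xi\mapsto\bar\xi$, so the conjugation trick yields no cancellation there; the paper needs Chebyshev polynomials and the Watkins--Zeitlin description of the minimal polynomial of $\cos(2\pi/m)$ for that case.
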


\begin{proof}
By M\"obius inversion it suffices to show that 
\[
a_m:=\sum_{\xi } \frac{1}{1-\xi} = \frac{m-1}{2},
\]
where the sum on the left side is over all nontrivial $m$th roots of unity. 
We have that $a_m = f'(1)/f(1)$ where $f(x) := (x^m-1)/(x-1)= x^{m-1}+x^{m-2}+\cdots+1$. Since $f(x)$ is palindromic, it follows that $f'(1)/f(1)$ equals half its degree, and the result follows.  
\end{proof}

\begin{remark}
Alternatively, the quantity on the left side of \eqref{eq: primitive root identity 1} is equal to $\Phi'_m(1)/\Phi_m(1)$, where $\Phi_m(x)$ is the $m$th cyclotomic polynomial. Since this polynomial is also palindromic, then
$\Phi_m'(1)/\Phi_m(1)$ equals half its degree, which is $\varphi(m)/2$.
\end{remark}

Next we give the proof of the identity used in the proof of  Theorem~\ref{thm:main} for $G(m,m,n)$. As a first step, we give a technical lemma on \defn{Chebyshev polynomials of the first kind}, denoted $T_n(x)$ for $n \geq 0$.  For our purposes, it is convenient to take as the definition of Chebyshev polynomials their generating function \cite[Eq.~1.105]{rivlin2020chebyshev}:
\begin{equation} \label{eq:gf T}
\sum_{n=0}^{\infty} T_n(x) z^n = \frac{1-xz}{1+x^2-2xz}.
\end{equation}

\begin{proposition} \label{prop. helper identities Chebyshev}
Fix a positive integer $s$.  Let \[
a(x):=\frac{T_{s+1}(x)-T_s(x)}{x-1} 
\qquad \text{and} \qquad b(x):=\frac{T_{s+1}(x)-T_{s-1}(x)}{x-1}. 
\] 
Then $a(x)$ and $b(x)$ are polynomials such that
\begin{enumerate}[(a)]
\item $a(1) = 2s+1$ and $a'(1) = s(s+1)(2s+1)/3$, and
\item $b(1)=4s$ and $b'(1)=2s(2s^2+1)/3$.
\end{enumerate}
\end{proposition}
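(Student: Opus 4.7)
The plan is to reduce the two claims to standard evaluations of Chebyshev polynomials and their first two derivatives at $x=1$.
First I would observe that since $T_n(1)=1$ for every $n\geq 0$ (easily read off from the generating function \eqref{eq:gf T} by setting $x=1$, or from $T_n(\cos\theta)=\cos(n\theta)$), both numerators vanish at $x=1$, so that $a(x)$ and $b(x)$ are indeed polynomials.

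The key inputs are the evaluations $T_n(1)=1$, $T_n'(1)=n^2$, and $T_n''(1)=n^2(n^2-1)/3$. The cleanest way to obtain the last two is via the classical differential equation $(1-x^2)T_n''(x)-xT_n'(x)+n^2T_n(x)=0$; setting $x=1$ gives $T_n'(1)=n^2$, and differentiating once more and setting $x=1$ yields $T_n''(1)=n^2(n^2-1)/3$.

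With these values in hand, for any polynomial $f(x)$ with $f(1)=0$, Taylor expansion at $x=1$ gives $f(x)/(x-1)\big|_{x=1}=f'(1)$ and $\bigl(f(x)/(x-1)\bigr)'\big|_{x=1}=f''(1)/2$. Applying this to $f(x)=T_{s+1}(x)-T_s(x)$ (for part (a)) and $f(x)=T_{s+1}(x)-T_{s-1}(x)$ (for part (b)), the claimed values $a(1)=(s+1)^2-s^2=2s+1$ and $b(1)=(s+1)^2-(s-1)^2=4s$ follow immediately, and the values of $a'(1)$ and $b'(1)$ reduce to direct algebraic simplification of the differences $T_{s+1}''(1)-T_s''(1)$ and $T_{s+1}''(1)-T_{s-1}''(1)$ using the cubic-in-$n^2$ formula for $T_n''(1)$.

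No step here is a serious obstacle; the only thing to be careful about is the polynomial identity in $s$ that produces $s(s+1)(2s+1)/3$ and $2s(2s^2+1)/3$, which is a short factoring exercise that I would carry out explicitly in the final write-up.
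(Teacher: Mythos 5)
Your proposal is correct and follows essentially the same route as the paper: both reduce everything to the evaluations $T_n(1)=1$, $T_n'(1)=n^2$, $T_n''(1)=n^2(n^2-1)/3$ and then use the fact that for $f(1)=0$ one has $\bigl(f/(x-1)\bigr)(1)=f'(1)$ and $\bigl(f/(x-1)\bigr)'(1)=f''(1)/2$. The only (cosmetic) difference is that you derive the derivative evaluations from the classical Chebyshev differential equation, whereas the paper differentiates the generating function \eqref{eq:gf T} in $x$ and reads them off the resulting series; both are standard and equally valid.
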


\begin{proof}
We prove part (a); the proof for (b) is analogous.

By setting $x = 1$ in \eqref{eq:gf T}, we see
$T_n(1)=1$ for all $n$.  Then it follows that $A(x) := T_{s+1}(x)-T_s(x) = (x-1)a(x)$ is divisible (as a polynomial) by $x - 1$, so $a(x)$ is a polynomial. By the
product rule, we have that $a(1)=A'(1)$ and $a'(1)=A''(1)/2$.

By differentiating the generating
function \eqref{eq:gf T} with respect to $x$ and evaluating at $x=1$ we obtain
\[
\sum_{n=0}^{\infty} T'_n(1) z^n = \frac{z^2+z}{(1-z)^3},
\]
which implies that $T_n'(1)=n^2$ and so $a(1) = A'(1) =  (s+1)^2 - s^2 = 2s+1$.

Next we show the identity for $a'(1)$. By differentiating the generating
function \eqref{eq:gf T} twice with respect to $x$ and evaluating at $x=1$ we obtain
\[
\sum_{n=0}^{\infty} T''_n(1) z^n = \frac{4z^3+4z^2}{(1-z)^5},
\]
which implies that $T_n''(1)=n^2(n^2-1)/3$ and so
\[
a'(1) = \frac{A''(1)}{2} = \frac{(s+1)^2((s+1)^2-1) - s^2(s^2-1)}{6} =  \frac{s(s+1)(2s+1)}{3}.
\qedhere
\]
\end{proof}

\begin{proposition} \label{prop: primitive root identity 2}
For a nonnegative integer $m\geq 2$ we have that 
\begin{equation} \label{eq: primitive root identity 2}
\sum_{\xi \text{ prim.}} \frac{1}{2 - \xi - \overline{\xi}} = \frac{1}{12}\sum_{r\mid m} r^2 \mu(m/r),
\end{equation}
where the sum on the left side is over primitive $m$th roots of unity.
\end{proposition}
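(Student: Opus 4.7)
The plan is to first compute the unrestricted analog $S(m) := \sum_{k=1}^{m-1} \frac{1}{2-\zeta^k-\zeta^{-k}}$, where $\zeta = e^{2\pi i/m}$, and then recover the primitive sum $T(m)$ (the left side of the identity) via Möbius inversion on the divisor lattice of $m$. Since every nontrivial $m$-th root of unity is a primitive $d$-th root for a unique divisor $d$ of $m$ with $d \geq 2$, grouping the terms gives $S(m) = \sum_{d \mid m} T(d)$ (setting $T(1) := 0$), so that $T(m) = \sum_{d \mid m} \mu(m/d) S(d)$.

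The main substantive step is to prove $S(m) = (m^2-1)/12$ for all $m \geq 1$. Since $2 - \zeta^k - \zeta^{-k} = 2(1-\cos(2\pi k/m))$, the relevant quantities are cosines at rational multiples of $\pi$, which are perfectly suited to the Chebyshev polynomials, and in particular to the helper polynomials $a(x)$ and $b(x)$ of Proposition~\ref{prop. helper identities Chebyshev}. I would split by parity: for odd $m = 2s+1$ use $a$, for even $m = 2s$ use $b$. In each case, an appropriate sum-to-product identity, respectively $\cos((s+1)\theta) - \cos(s\theta) = -2\sin((2s+1)\theta/2)\sin(\theta/2)$ and $\cos((s+1)\theta) - \cos((s-1)\theta) = -2\sin(s\theta)\sin\theta$, identifies the simple roots of the numerator polynomial $T_{s+1}-T_s$ (resp.\ $T_{s+1}-T_{s-1}$) as precisely $\cos(2\pi j/m)$ for $j = 0, 1, \ldots, s$. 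After dividing by $x-1$ to remove the trivial root $x=1$, the polynomial $a(x)$ (resp.\ $b(x)$) has as its roots exactly the nontrivial cosine values. Logarithmic differentiation then gives $a'(1)/a(1) = \sum_{j=1}^s 1/(1-\cos(2\pi j/m))$ (and similarly for $b$), and the helper's evaluations of these ratios reduce the rest to elementary algebra after accounting for the conjugation symmetry $k \leftrightarrow m-k$ on $\{1, \ldots, m-1\}$.

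The main obstacle is the careful bookkeeping in the even case. The symmetry $k \leftrightarrow m-k$ on $\{1, \ldots, m-1\}$ has a fixed point at $k = m/2$ (corresponding to the self-conjugate root $\zeta^{m/2} = -1$), contributing a lone summand $1/4$ to $S(m)$; meanwhile $b$ also has $\cos\pi = -1$ as a root (contributing $1/2$ to $b'(1)/b(1)$), so these two bookkeeping conventions must be reconciled. Writing $S(m) = \sum_{j=1}^{s-1} \frac{1}{1-\cos(2\pi j/m)} + \frac{1}{4}$ and $\frac{b'(1)}{b(1)} = \sum_{j=1}^{s-1} \frac{1}{1-\cos(2\pi j/m)} + \frac{1}{2}$ yields $S(m) = b'(1)/b(1) - 1/4 = (2s^2+1)/6 - 1/4 = (m^2-1)/12$, matching the odd case.

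With $S(d) = (d^2-1)/12$ in hand, substituting into $T(m) = \sum_{d \mid m} \mu(m/d) S(d)$ and using $\sum_{d \mid m} \mu(m/d) = 0$ for $m \geq 2$ to kill the constant part leaves $T(m) = \frac{1}{12}\sum_{d \mid m}\mu(m/d)\, d^2$, which is the right side of \eqref{eq: primitive root identity 2}.
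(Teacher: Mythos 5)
Your proposal is correct and follows essentially the same route as the paper: Möbius inversion to reduce to the sum over all nontrivial $m$th roots, conversion to a cosine sum with the same $1/4$ correction in the even case, and evaluation via the logarithmic derivative at $x=1$ of the Chebyshev-difference polynomials of Proposition~\ref{prop. helper identities Chebyshev}. The only difference is that where the paper invokes the Watkins--Zeitlin result on minimal polynomials of $\cos(2\pi/m)$ to identify $\prod_{k=1}^{s}\bigl(x-\cos(2\pi k/m)\bigr)$ with $2^{-s}(x-1)^{-1}\bigl(T_{s+1}(x)-T_s(x)\bigr)$ (resp.\ $T_{s+1}-T_{s-1}$), you obtain the same root identification directly from $T_n(\cos\theta)=\cos(n\theta)$ and sum-to-product identities, which makes that one step self-contained but does not change the argument.
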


\begin{proof}
For $m = 2$, the result is a trivial calculation.  So assume $m > 2$.  Let $s = \left\lfloor \frac{m}{2} \right\rfloor$.
By M\"obius inversion, it is enough to show that
  \[
c_m := \sum_{\substack{\xi \colon \xi^m = 1, \\ \xi \neq 1}} \frac{1}{2 - \xi - \overline{\xi}} = \frac{m^2-1}{12},
\]
where the sum is over all nontrivial $m$th roots of unity.  When $m$ is odd, each summand appears twice (once for $\xi$ and once for $\overline{\xi}$), while when $m$ is even, each summand appears twice except for the root $\xi=-1 = \overline{\xi}$ (see Figure~\ref{fig: identity even odd cases roots}). By combining like terms, we can rewrite $c_m$ as 
\[
c_m = \sum_{k=1}^s \frac{1}{1-\cos(2\pi k/m)} \,-\, \begin{cases} 
      0 &\text{if $m$ is odd},\\
      1/4 & \text{if $m$ is even}.
\end{cases}
\]
Let $\Psi_m(x)$ be the minimal polynomial over $\QQ$ of $\cos(2\pi/m)$, and let $Q_m(x) := \prod_{r \mid m, r\neq 1} \Psi_r(x)$.  It follows from the analysis of these polynomials in \cite{WZ} that 
\[
Q_m(x) = \prod_{k = 1}^{s} \left(x - \cos(2\pi k/ m)\right),
\]
and consequently that
\[
    c_m = \frac{Q'_m(1)}{Q_m(1)} - \begin{cases} 
      0 &\text{if $m$ is odd},\\
      1/4 & \text{if $m$ is even}.
      \end{cases}
\]
On the other hand, the main theorem of \cite{WZ} directly implies that
  \[
Q_m(x) =  \frac{1}{2^s (x - 1)} \cdot \begin{cases}
  T_{s+1}(x) - T_s(x)     &\text{if $m$ is odd},\\
  T_{s+1}(x) - T_{s-1}(x) &\text{if $m$ is even}.
    \end{cases}
  \]

When $m = 2s + 1$ is odd, we have by Proposition~\ref{prop. helper identities Chebyshev}(a) that $Q_m(1)=(2s+1)/2^{s}$ and $Q'_m(1)=s(s+1)(2s+1)/(3 \cdot 2^s)$, and so
\[
 \frac{Q'_m(1)}{Q_m(1)} = \frac{s(s+1)}{3} = \frac{m^2 - 1}{12}.
\]
When $m=2s$ is even, we have by Proposition~\ref{prop. helper identities Chebyshev}(b) that $Q_m(1) = 4s/2^s$ and
$Q'_m(1)=2s(2s^2+1)/(3\cdot 2^s)$
and thus
\[
\frac{Q'_m(1)}{Q_m(1)} -\frac{1}{4} = \frac{2s^2 + 1}{6} - \frac{1}{4} = \frac{m^2-1}{12}.
\]
This completes the proof.
\end{proof}

\begin{remark}\label{Rem: Dedekind}
The sequence  $J_2(m) = \sum_{r\mid m} r^2 \mu(m/r)$ appearing on the right side of \eqref{eq: primitive root identity 2} is \href{https://oeis.org/A007434}{A007434} in \cite{oeis}; it counts elements of order $m$ in the bicyclic group $(\ZZ/m\ZZ)^2$.  It is an integer multiple of Euler's totient $\varphi(m)$, with quotient given by the \emph{Dedekind $\psi$-function} \cite[\href{https://oeis.org/A001615}{A001615}]{oeis}.
\end{remark}

\begin{figure}
    \centering
    \includegraphics{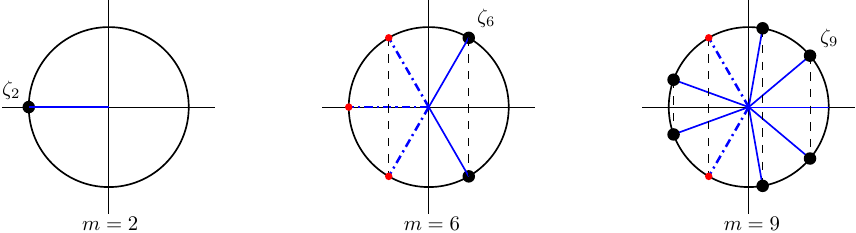}
    \caption{Illustration of the quantities $\xi+\overline{\xi}$ over roots of unity appearing in the identity of Proposition~\ref{prop: primitive root identity 2} for $m=2, 6, 9$. The primitive roots are denoted in black ($\bullet$) while the other roots are denoted in red (\textcolor{red}{$\bullet$}). The root $\xi$ is paired with $\overline{\xi}$ by dashed lines.}
    \label{fig: identity even odd cases roots}
\end{figure}

\printbibliography

\end{document}